\numberwithin{equation}{section}
\begin{document}


\newcommand{\az}{\alpha_{Z}}
\newcommand{\bz}{\beta_{Z}}
\newcommand{\ax}{\alpha_{\X}}\newcommand{\bx}{\beta_{\X}}
\newcommand{\popo}{\mathbb{P}^1 \times \mathbb{P}^1}
\newcommand{\pnpm}{\mathbb{P}^n \times \mathbb{P}^m}
\newcommand{\pnr}{\mathbb{P}^{n_1}\times \cdots \times \mathbb{P}^{n_r}}
\newcommand{\pnk}{\mathbb{P}^{n_1}\times \cdots \times \mathbb{P}^{n_k}}
\newcommand{\prthree}{\mathbb{P}^1 \times \mathbb{P}^1 \times \mathbb{P}^1}
\newcommand{\Iz}{I_{Z}}
\newcommand{\Ix}{I_{\X}}
\newcommand{\C}{\mathcal{C}}
\newcommand{\Y}{\mathbb{Y}}
\newcommand{\Z}{\mathbb{Z}}
\newcommand{\Zr}{\mathbb{Z}_{red}}
\newcommand{\N}{\mathbb{N}}
\newcommand{\pr}{\mathbb{P}}
\newcommand{\X}{\mathbb{X}}
\newcommand{\F}{\mathbb{F}}

\newcommand{\supp}{\operatorname{Supp}}
\newcommand{\depth}{\operatorname{depth}}
\newcommand{\ol}{\overline{L}}
\newcommand{\Ssx}{\mathcal S_{\X}}
\newcommand{\Ss}{\mathcal S}
\newcommand{\dtz}{\Delta H_{Z}}
\newcommand{\dtc}{\Delta^{C} H_{Z}}
\newcommand{\dtcc}{\Delta^{C} H_{Z_{ij}}}
\newcommand{\dt}{\Delta}
\newcommand{\B}{\mathcal{B}}
\newcommand{\ay}{\alpha_{Y}}
\newcommand{\by}{\beta_{Y}}
\newcommand{\ui}{\underline{i}}
\newcommand{\ua}{\underline{\alpha}}
\newcommand{\uj}{\underline{j}}

\newtheorem{theorem}{Theorem}[section]
\newtheorem{corollary}[theorem]{Corollary}
\newtheorem{proposition}[theorem]{Proposition}
\newtheorem{lemma}[theorem]{Lemma}
\newtheorem{alg}{Algorithm}
\newtheorem{question}{Question}
\newtheorem{problem}{Problem}
\newtheorem{conjecture}[theorem]{Conjecture}

\theoremstyle{definition}
\newtheorem{definition}[theorem]{Definition}
\newtheorem{remark}[theorem]{Remark}
\newtheorem{example}[theorem]{Example}
\newtheorem{acknowledgement}{Acknowledgment}
\newtheorem{notation}[theorem]{Notation}
\newtheorem{construction}[theorem]{Construction}


\title{ACM sets of points in multiprojective space}
\thanks{Version: Revised Final, Nov. 28, 2008}
\author{Elena Guardo}
\address{Dipartimento di Matematica e Informatica\\
Viale A. Doria, 6 - 95100 - Catania, Italy}
\email{guardo@dmi.unict.it}

\author{Adam Van Tuyl}
\address{Department of Mathematics \\
Lakehead University \\
Thunder Bay, ON P7B 5E1, Canada}
\email{avantuyl@sleet.lakeheadu.ca}

\keywords{points, multiprojective spaces, arithmetically Cohen-Macaulay,
Hilbert function, resolutions, separators}
\subjclass{13H10, 14M05, 13D40, 13D02}

\begin{abstract}
If $\X$ is a finite set of points in a multiprojective space
$\pnr$ with $r\geq 2$, then $\X$ may or may not be arithmetically Cohen-Macaulay
(ACM).  For sets of points in $\popo$ there are several classifications of
the ACM sets of points. In this paper we investigate the natural
generalizations of these classifications to an arbitrary
multiprojective space.  We show that each classification for
ACM points in $\popo$ fails to extend to the general case.
We also give some new necessary and sufficient conditions
for a set of points to be ACM.
\end{abstract}
\maketitle


\section{Introduction}

Let $\X$ be a finite set of points in a multiprojective space
$\pnr$, and let $R/I_{\X}$ denote the associated $\N^r$-graded
coordinate ring. When $r = 1$, then $R/I_{\X}$ is always
Cohen-Macaulay.  On the other hand, if $r \geq 2$, then $R/I_{\X}$
may or may not be Cohen-Macaulay.  More precisely, we know that
$\dim R/I_{\X} = r$, the number of projective spaces. However, the
depth of $R/I_{\X}$ may take on any value in the set
$\{1,\ldots,r\}$. When $R/I_{\X}$ is Cohen-Macaulay, that is,
$\depth R/I_{\X} = \dim  R/I_{\X} = r$, then $\X$ is called an
{\bf arithmetically Cohen-Macaulay} (ACM) set of points.

Because a set of points in a multiprojective space
may or may not be ACM, a natural problem arises:

\begin{problem} Find a classification of ACM sets of points in $\pnr$
for $r \geq 2$.
\end{problem}

Little is known about this problem except in the case that $\X \subseteq
\popo$. In this situation there are several classifications.
Giuffrida, Maggioni, and Ragusa \cite{GuMaRa}, who helped to
initiate the study of points in multiprojective spaces
(see, for example \cite{GuMaRa2,GuMaRa3,Gu3,HVT,m1,SVT,VT1,VT2,VT3} for more on these points), provided
the first classification.  They showed that ACM sets of points in
$\popo$ can be classified via their Hilbert functions. The two
authors \cite{Gu2,VT2} independently
gave geometric classifications of ACM sets of points in $\popo$.
More recently, L. Marino \cite{m3} used the notion of a separator to provide
a new classification of ACM sets of points in $\popo$.

In this paper we will consider the natural generalizations of the
above classifications to an arbitrary multiprojective space.
As we shall show, these natural generalizations no longer
classify ACM sets of points, thus suggesting a solution
to Problem 1 is quite subtle.  We give a partial answer
to Problem 1 by giving some necessary and
sufficient conditions for a set of points to be ACM in $\pnr$.

Before proceeding, we should point out that Problem 1 is a refinement
of the following question:  if $X_1,\ldots,X_s$ are linear subspaces of $\pr^n$,
then when is $X = \bigcup_{i=1}^s X_i$ ACM?  To see this, note
that if we consider only
the graded structure of $R/\Ix$, then the defining ideal of each point
is also the defining ideal of a linear subspace in a projective space.
This paper, therefore, can be seen as one attack on this more general
question.  Alternatively, this paper can viewed as part
of the program to understand when a multigraded ring is Cohen-Macaulay
(for example, see \cite{CCH}).

We now expand upon the results of this paper.
We start in Section 2 by recalling the relevant results and definitions
about sets of points in a multiprojective space.  In Section 3 we study
the Hilbert function of an ACM set of points.  As mentioned above,
ACM sets of points in $\popo$ can be classified
via their Hilbert function $H_{\X}$; precisely, $\X$ is ACM in $\popo$ if and only
if $\Delta H_{\X}$, a generalized first difference function, is the
Hilbert function of a bigraded artinian quotient of $k[x_1,y_1]$.
One direction of this characterization extends to any multiprojective space,
as first proved by the second author \cite{VT2}:

\begin{theorem}[Theorem \ref{halfACMclassify}] \label{theorem1}
Let $\X \subseteq \pnr$ be a finite set of points
with Hilbert function $H_{\X}$.
If $\X$ is ACM, then $\Delta H_{\X}$ is the Hilbert function
of an $\N^r$-graded artinian quotient of $k[x_{1,1},\ldots,x_{1,n_1},\ldots,x_{r,1},\ldots,x_{r,n_r}]$
with $\deg x_{i,j} = e_i$.
\end{theorem}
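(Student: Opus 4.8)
The plan is to translate the Cohen--Macaulay hypothesis into a statement about a regular sequence, and then to show that quotienting by such a sequence produces exactly the $\N^r$-graded artinian ring whose Hilbert function is $\Delta H_{\X}$. Concretely, since $\X$ is ACM we have $\depth R/\Ix = \dim R/\Ix = r$. The multigraded coordinate ring of $\pnr$ is $R = k[x_{1,0},\ldots,x_{1,n_1},\ldots,x_{r,0},\ldots,x_{r,n_r}]$ with $\deg x_{i,j} = e_i$, the $i$-th standard basis vector of $\N^r$. The generalized first difference operator $\Delta$ that appears in the statement is the one obtained by subtracting off, for each of the $r$ factors, a generic linear form of degree $e_i$; so $\Delta H_{\X}$ is by definition the Hilbert function of $R/(\Ix + (\ell_1,\ldots,\ell_r))$ where $\ell_i$ is a general form of degree $e_i$.

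The key step is then to argue that a general choice of $\ell_1,\ldots,\ell_r$ forms a regular sequence on $R/\Ix$. Because $\X$ is a finite set of points, each $\ell_i$ can be chosen to avoid all the (finitely many) points of $\X$, hence to avoid every associated prime of $\Ix$; this makes $\ell_1$ a nonzerodivisor on $R/\Ix$. Since $\depth R/\Ix = r$, one can continue: a general $\ell_2$ avoids the associated primes of $R/(\Ix+(\ell_1))$, and so on, so that $\ell_1,\ldots,\ell_r$ is a regular sequence of length $r = \depth R/\Ix$. The standard exact sequence
\[
0 \longrightarrow (R/J)(-e_i) \stackrel{\cdot \ell_i}{\longrightarrow} R/J \longrightarrow R/(J+(\ell_i)) \longrightarrow 0
\]
holds at each stage (with $J$ the ideal accumulated so far), and taking Hilbert functions turns each quotient into a first difference in the $e_i$-direction. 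Iterating over $i=1,\ldots,r$ shows that the Hilbert function of $A := R/(\Ix + (\ell_1,\ldots,\ell_r))$ is precisely $\Delta H_{\X}$.

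It remains to check that $A$ is artinian as an $\N^r$-graded ring. Since $\ell_1,\ldots,\ell_r$ is a regular sequence of length $r$ on the $r$-dimensional ring $R/\Ix$, the quotient $A$ has Krull dimension $0$, hence is artinian. Writing $\bar R = R/(\ell_1,\ldots,\ell_r)$, which is an $\N^r$-graded polynomial-like ring generated by the images of the remaining variables (of which there are $n_i$ in degree $e_i$ after killing one linear form per factor), we get that $A$ is an $\N^r$-graded artinian quotient of $k[x_{1,1},\ldots,x_{1,n_1},\ldots,x_{r,1},\ldots,x_{r,n_r}]$ with $\deg x_{i,j} = e_i$, as required. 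The main obstacle is the genericity/regular-sequence argument: one has to be careful that ``general linear form of degree $e_i$'' genuinely avoids the associated primes at every stage of the iteration, and that the cyclic-module exact sequence above is exact on the left precisely because $\ell_i$ is a nonzerodivisor at that stage — this is where the hypothesis $\depth R/\Ix = r$ is essential, and it is the only place the ACM assumption is used. Everything else is a routine bookkeeping of Hilbert functions through short exact sequences.
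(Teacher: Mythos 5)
Your argument is correct and follows essentially the same route as the paper: the paper invokes its Theorem \ref{regularsequence} to produce a regular sequence $L_1,\ldots,L_r$ on $R/\Ix$ with $\deg L_i = e_i$, then runs the same iterated short exact sequences and the same identification of $R/(\Ix,L_1,\ldots,L_r)$ as an artinian quotient of $k[x_{1,1},\ldots,x_{r,n_r}]$ after a change of coordinates taking $L_i = x_{i,0}$. The one point you gloss is that existence step itself: in the multigraded setting, prime avoidance in a \emph{fixed} degree $e_i$ is not automatic from $\depth \geq i$ alone (an associated prime of the intermediate quotient could a priori contain the whole piece $R_{e_i}$), and one must check at each stage that the associated primes are the ideals $I_P + (\ell_1,\ldots,\ell_{i-1})$, whose degree-$e_i$ components are proper subspaces --- which is exactly the content the paper isolates in Theorem \ref{regularsequence}.
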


\noindent It was not known whether the converse held  (in fact,
this question was raised in the second author's  thesis
\cite[Question 1.3.9]{VT}).  We give the first example (see
Example \ref{example1}) of a set of points in $\pr^2 \times \pr^2$
for which the converse fails and it can be extended to any
multiprojective space $\pnr$ with at least two $n_i$'s greater
than or equal to  $2$. This example together with Example \ref
{example2} demonstrates that we cannot expect a classification of
ACM sets of points based only upon the Hilbert function. However,
if all but one of the $n_i$'s equal one, we expect the converse of
Theorem \ref{halfACMclassify} to hold.  We give partial evidence
for this claim in Theorem \ref{depth=r-1} where we show that the
converse holds for sets of points $\X$ in $\pr^1 \times \cdots
\times \pr^1$ ($r$ times) with $\depth(R/I_\X) =r-1$.  In fact,
Theorem \ref{depth=r-1}, combined with Theorem \ref{theorem1},
will allow us to give a new proof of Giuffrida, Maggioni, and
Ragusa's result.

In Section 4 we examine how the geometry of a set of points influences
its ACMness.  If $\X$ is a set of points in $\pnpm$, we say that $\X$ satisfies {\bf property $(\star)$}
if whenever $P_1 \times Q_1$ and $P_2 \times Q_2$ are in $\X$ with $P_1 \neq P_2$
and $Q_1 \neq Q_2$, then either $P_1 \times Q_2$ or $P_2 \times Q_1$ (or both) are in $\X$.
The two authors independently showed (see \cite{Gu2,VT2}) that $\X$ is ACM in $\popo$
if and only if $\X$ satisfies property $(\star)$.  We extend one direction of this classification:

\begin{theorem}[Theorem \ref{geometriccondition}] \label{theorem2}
Let $\X \subseteq \pr^1 \times \pr^n$ be a finite set
of points.  If $\X$ satisfies property $(\star)$, then $\X$ is ACM.
\end{theorem}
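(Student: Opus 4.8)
The plan is to exploit that the first factor is $\pr^1$ in order to reduce the Cohen--Macaulayness of the two-dimensional ring $R/I_\X$ to a one-dimensional computation, obtained by cutting with a generic form of degree $(1,0)$. Write $R$ for the coordinate ring of $\pr^1\times\pr^n$ with variables $x_0,x_1$ of degree $(1,0)$ and $y_0,\ldots,y_n$ of degree $(0,1)$, put $S=k[y_0,\ldots,y_n]$, let $\pi_1(\X)=\{P_1,\ldots,P_s\}\subseteq\pr^1$ be the distinct first coordinates, let $\Y_i=\{Q\in\pr^n : P_i\times Q\in\X\}$ be the $i$-th ``column'', and let $\ell_i\in k[x_0,x_1]_1$ be the linear form vanishing at $P_i$. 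The first step is to note that property $(\star)$ forces the columns to be totally ordered by inclusion: if neither of $\Y_a,\Y_b$ contained the other we could pick $Q\in\Y_a\setminus\Y_b$ and $Q'\in\Y_b\setminus\Y_a$, and then $P_a\times Q$, $P_b\times Q'$ would be points of $\X$ with $P_a\neq P_b$, $Q\neq Q'$, while neither $P_a\times Q'$ nor $P_b\times Q$ lies in $\X$ --- contradicting $(\star)$. So after relabelling $\Y_1\supseteq\Y_2\supseteq\cdots\supseteq\Y_s$, whence $I_{\Y_1}\subseteq I_{\Y_2}\subseteq\cdots$ for the saturated point ideals $I_{\Y_i}\subseteq S$.

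I would then prove the explicit formula
\[
I_\X = I_{\Y_1}R + \ell_1 I_{\Y_2}R + \ell_1\ell_2 I_{\Y_3}R + \cdots + \ell_1\cdots\ell_{s-1}I_{\Y_s}R + (\ell_1\cdots\ell_s)
\]
for a configuration with nested columns. The inclusion ``$\supseteq$'' is checked directly at the points of $\X$: in the summand $\ell_1\cdots\ell_{j-1}I_{\Y_j}R$, evaluation at $P_m\times Q$ vanishes because $\ell_m$ is a factor when $m<j$, and because $\Y_m\subseteq\Y_j$ when $m\ge j$. For ``$\subseteq$'' I would induct on $s$: the base case $s=1$ is the standard identity $I_{\{P_1\}\times\Y_1}=(\ell_1)+I_{\Y_1}R$; in the inductive step the formula for $s-1$ columns writes any $g\in I_\X$ as a combination of the first $s-1$ summands plus $\ell_1\cdots\ell_{s-1}h$, and then $\ell_1\cdots\ell_{s-1}h\in I_\X\subseteq I_{\{P_s\}\times\Y_s}=(\ell_s)+I_{\Y_s}R$ forces $h\in(\ell_s)+I_{\Y_s}R$, since modulo $(\ell_s)+I_{\Y_s}R$ the ring is $k[t]\otimes_k(S/I_{\Y_s})$, free over $k[t]$, on which the image of $\ell_1\cdots\ell_{s-1}$ --- a nonzero element of $k[t]$, the $P_i$ being distinct --- is a nonzerodivisor.

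For the main step, choose $L\in k[x_0,x_1]_1$ with $L(P_i)\neq 0$ for all $i$; then $L$ vanishes at no point of $\X$, hence is a nonzerodivisor on the reduced ring $R/I_\X$, and $\dim R/(I_\X,L)=\dim R/I_\X-1=1$. After a linear change of the $x$-variables take $L=x_1$, so $R/(L)=k[x',y_0,\ldots,y_n]$ with $x'=x_0$, and every $\ell_i$ reduces to a nonzero scalar multiple of $x'$. Substituting into the formula of the previous step and collecting the coefficient of each power of $x'$ shows that $(I_\X,L)/(L)$ is $I_{\Y_{r+1}}$ in $x'$-degree $r$ for $0\le r\le s-1$ and all of $S$ in $x'$-degree $\ge s$, so that
\[
R/(I_\X,L)\ \cong\ \bigoplus_{r=0}^{s-1}\big(S/I_{\Y_{r+1}}\big)
\]
as $S$-modules, with $x'$ acting by the natural maps $S/I_{\Y_{r+1}}\twoheadrightarrow S/I_{\Y_{r+2}}$. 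Each $S/I_{\Y_{r+1}}$ is the coordinate ring of a finite set of points in $\pr^n$, hence one-dimensional Cohen--Macaulay, so a generic linear form $z\in k[y_0,\ldots,y_n]_1$ avoiding all points of $\Y_1$ is a nonzerodivisor on each summand, hence on $R/(I_\X,L)$. Therefore $R/(I_\X,L)$ is Cohen--Macaulay of dimension $1$; since $L$ is a nonzerodivisor on $R/I_\X$, this gives $\depth(R/I_\X)=\depth\big(R/(I_\X,L)\big)+1=2=\dim R/I_\X$, i.e.\ $\X$ is ACM.

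The genuine work should be in the ``$\subseteq$'' half of the ideal formula (the inductive cancellation) and in checking that passing to $R/(L)$ collapses that formula into exactly the displayed direct sum, with no residual torsion in the remaining $x'$-degrees; the homological conclusion is then automatic. It is worth emphasizing that the argument really uses that the first factor is $\pr^1$: only then is each $\ell_i$ a single linear form, so that killing one generic $(1,0)$-form removes all but one $x$-variable and makes every $\ell_i$ proportional --- a mechanism with no counterpart when the first factor has dimension $\ge 2$.
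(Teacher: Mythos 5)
Your argument is correct, and it takes a genuinely different route from the paper's. Both proofs start from the same combinatorial consequence of $(\star)$: the fibres $\Y_i=\pi_2(\X_{P_i})$ are totally ordered by inclusion (the paper only extracts a fibre $\X_P$ with $\pi_2(\X_P)=\pi_2(\X)$, which is the special case it needs). From there the paper argues by induction on $|\pi_1(\X)|$, peeling off that maximal column and running the sequence $0\to R/I_{\X}\to R/I_{\X_P}\oplus R/I_{\Y}\to R/(I_{\X_P}+I_{\Y})\to 0$ with $\Y=\X\setminus\X_P$; the key computation is $I_{\X_P}+I_{\Y}=I_{\Y}+(x_0)$, whose quotient is CM of dimension one by induction, and the depth lemma finishes. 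You instead write down the defining ideal explicitly, $I_{\X}=\sum_{j}\ell_1\cdots\ell_{j-1}I_{\Y_j}R+(\ell_1\cdots\ell_s)$, and exhibit the hyperplane section $R/(I_{\X},L)$ by a generic $(1,0)$-form as $\bigoplus_{r=0}^{s-1}S/I_{\Y_{r+1}}$, a direct sum of one-dimensional CM rings. I checked the two places where your sketch defers the work and both go through: the inclusion $\subseteq$ of the ideal formula does reduce, via the $(s-1)$-column case, to the cancellation of $\ell_1\cdots\ell_{s-1}$ modulo $(\ell_s)+I_{\Y_s}R\cong k[t]\otimes_k(S/I_{\Y_s})$ (you should say explicitly that the first $s-1$ summands of the shorter formula already lie in $I_{\X}$, so that $\ell_1\cdots\ell_{s-1}h\in I_{\X}$); and since the $I_{\Y_j}$ are increasing, collecting the $x'$-degree-$r$ part of the reduced ideal does give exactly $(x')^rI_{\Y_{r+1}}$ for $r<s$ and everything for $r\geq s$, with no extra torsion. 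Your route is longer than the paper's, but it buys more: explicit generators for $I_{\X}$, an explicit description of the one-dimensional reduction, and hence the Hilbert function $H_{\X}(i,j)=\sum_{r=0}^{\min(i,s-1)}H_{\Y_{r+1}}(j)$ for free. Both arguments use the first factor being $\pr^1$ in an essential way, for the reason you note at the end, which matches the paper's own remark about the obstruction to extending the theorem to $\pr^m\times\pr^n$.
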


\noindent
The converse, however, is false, as shown in Example \ref{example3} where we give an
example of an ACM set of points in $\pr^1 \times \pr^2$ which fails to
satisfy property $(\star)$.  At the end of Section 4, we show
how to use
Theorem \ref{theorem2} to easily
construct ACM sets of points in $\pr^1 \times \pr^n$.

In Section 5 we study the connection between the separators of a point
and the ACMness of a set of points.
If  $P \in \X$, then the multihomogeneous form $F\in R$ is
a {\bf separator for} $P$ if $F(P)\neq 0$ and $F(Q)=0$ for all $Q
\in \X \setminus\{P\}$.   The {\bf degree of a point} $P\in \X$ is the set
\begin{eqnarray*}
\deg_{\X}(P)&=&\min\{ \deg F ~|~ F ~~\text{is a separator for $P\in \X$}\}.
\end{eqnarray*}
(We are using the partial order on $\N^r$ defined by
$(i_1,\ldots,i_r) \succeq (j_1,\ldots,j_r)$ whenever $i_t \geq j_t$ for $t=1,\ldots,r$.)
Note that if $r\geq 1$, then we may have $|\deg_{\X}(P)| > 1$.
Separators for points in $\pr^1 \times \pr^1$ were first introduced by Marino
 \cite{m1},
who extended the original definition for points in $\pr^n$ due to Orecchia \cite{O}.
Marino has recently shown \cite{m3} that a set of points $\X \subseteq \popo$ is ACM
if and only if $|\deg_{\X}(P)| = 1$ for all $P \in \X$.  We show
that one direction of Marino's result holds in an arbitrary multiprojective space:

\begin{theorem}[Theorem \ref{uniqueMultsep}]\label{theorem3}
Let $\X \subseteq \pnr$ be a finite set of points.
If $\X$ is ACM, then $|\deg_{\X}(P)| = 1$ for every point $P \in \X$.
\end{theorem}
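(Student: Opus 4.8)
My plan is to reinterpret $\deg_{\X}(P)$ in terms of a module over the polynomial ring $R/I_P$ and then exploit that this ring is a unique factorization domain. Write $\X' = \X \setminus \{P\}$, $S = R/I_P$, and $M = I_{\X'}/I_{\X}$. First I would record the elementary structure. Since $I_\X = I_{\X'} \cap I_P$, the second isomorphism theorem gives $M \cong (I_{\X'}+I_P)/I_P$, so $M$ is (isomorphic to) a multigraded ideal $\mathfrak{a}$ of $S$; moreover $I_\X : I_P = I_{\X'}$ (because $I_P$ is prime and $I_{\X'}\not\subseteq I_P$), so in fact $M \cong \operatorname{Hom}_R(R/I_P,\, R/I_\X)$ and $\operatorname{Ann}_R M = I_P$. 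Because $P = P_1\times\cdots\times P_r$ is a point, $I_P = \wp_1 + \cdots + \wp_r$ with each $R/\wp_j$ a polynomial ring in one variable, so $S \cong k[t_1,\dots,t_r]$ with $\deg t_j = e_j$; in particular $S$ is regular and a UFD. Finally I would spell out the dictionary: a multihomogeneous $F$ is a separator for $P$ exactly when $F \in I_{\X'}\setminus I_\X$, so the multidegrees occurring among separators of $P$ are precisely $\{\underline i : \mathfrak{a}_{\underline i} \neq 0\}$, and $\deg_{\X}(P)$ is the set of its $\succeq$-minimal elements. Hence it suffices to prove $\mathfrak{a}$ is principal: if $\mathfrak a = gS$ with $g$ multihomogeneous of degree $\underline c$, then $\mathfrak a_{\underline i}\neq 0 \iff \underline i \succeq \underline c$, so $\deg_{\X}(P) = \{\underline c\}$.

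To prove $\mathfrak a$ is principal I would use ACMness as follows. Since $\X$ is ACM, $R/I_\X$ is Cohen–Macaulay, hence satisfies Serre's condition $(S_2)$. The module $\operatorname{Hom}_R(N,L)$ satisfies $(S_2)$ whenever $L$ does — realize it from a finite presentation of $N$ as the kernel of a map $L^b \to L^a$ and run the standard depth inequalities — so $M = \operatorname{Hom}_R(R/I_P, R/I_\X)$ is an $(S_2)$ $R$-module. Since $\operatorname{Ann}_R M = I_P$, the depth and dimension of any localization of $M$ are unchanged on passing between the $R$- and $S$-module structures, so $M$ is $(S_2)$ as an $S$-module. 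But $M \cong \mathfrak a$ is a nonzero ideal of the domain $S$, hence torsion-free of rank one; a torsion-free $(S_2)$ module over a normal domain is reflexive, and a reflexive rank-one module over a UFD is free. Therefore $\mathfrak a \cong M \cong S$ is principal, completing the proof.

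The steps I expect to require the most care are the two ``transfer'' points: checking that $I_\X : I_P = I_{\X'}$ and $\operatorname{Ann}_R M = I_P$, so that $M$ genuinely is a faithful module over the polynomial ring $S$; and checking that $(S_2)$ is inherited through $\operatorname{Hom}$ and is unaffected by the change of rings $R \rightsquigarrow R/I_P$. The conceptual crux, though, is just that $R/I_P$ is a UFD: this is exactly where the hypothesis that $P$ is a reduced point enters, and it is what makes ``reflexive of rank one'' collapse to ``free'' — without it one would only conclude that $\mathfrak a$ is a divisorial ideal, which can have several minimal generators. (For $r=1$ the statement is trivial since $\deg_{\X}(P)$ is a single integer, and more generally for $\X$ in a single projective space $R/I_\X$ is automatically Cohen–Macaulay.)
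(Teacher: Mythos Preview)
Your argument is correct and takes a genuinely different route from the paper's. The paper argues directly: assuming two minimal separators $F,G$ of incomparable degrees $\underline a,\underline b$, it compares $(I_\X,F)_{\underline c}$ and $(I_\X,G)_{\underline c}$ at $\underline c=\max(\underline a,\underline b)$, uses that the $x_{i,0}$ form a regular sequence on $R/I_\X$ to peel off a factor and produce a separator $H_2$ with $\deg H_2 \prec_{\mathrm{lex}} \deg G$, and then derives a contradiction with the minimality of $F$ and $G$. Your approach instead identifies $\deg_\X(P)$ with the set of minimal multidegrees of the ideal $\mathfrak a=(I_{\X'}+I_P)/I_P$ in the polynomial ring $S=R/I_P\cong k[t_1,\dots,t_r]$ and shows $\mathfrak a$ is principal by the chain ACM $\Rightarrow (S_2)\Rightarrow$ reflexive $\Rightarrow$ free over a UFD. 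This is more conceptual: it isolates exactly where the hypothesis enters (Cohen--Macaulayness gives $(S_2)$; $P$ being a reduced point makes $S$ a UFD so that the class group is trivial), and it suggests what might survive for fat points or non-reduced schemes (one would only get that $\mathfrak a$ is divisorial). The paper's argument, by contrast, is entirely elementary---no reflexive modules or class groups---and stays within the Hilbert-function framework already set up in that section. One point to tighten in your write-up: the assertion ``$\operatorname{Hom}$ into an $(S_2)$ module is $(S_2)$'' is cleanest when stated over the Cohen--Macaulay ring $A=R/I_\X$ itself (where the ring and module conventions for $(S_2)$ coincide), since $R/I_\X$ is not $(S_2)$ as an $R$-module in the absolute sense; the transfer to the $S$-module structure then uses $\dim A_{\bar{\mathfrak p}}\geq \dim S_{\mathfrak q}$ for primes $\mathfrak q\supseteq I_P/I_P$, which follows from $I_P\supseteq I_\X$. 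With that small clarification, everything goes through.
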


\noindent
The converse of Theorem \ref{theorem3} fails to hold;  Example \ref{example4}
gives an example of a set of points $\X \subseteq \pr^2 \times \pr^2$ where
every point $P \in \X$ has $|\deg_{\X}(P)| = 1$, but $\X$
fails to be ACM.

Finally, we note that examples of points in $\pnr$, especially the counterexamples
to the converses of Theorems \ref{theorem1}, \ref{theorem2}, and
\ref{theorem3},  play a
prominent role in this paper.
Instrumental in finding these examples was
the computer program {\tt
CoCoA} \cite{Co}.
To encourage further experimentation, our {\tt CoCoA} scripts and
examples can be found on the second author's
webpage.\footnote[1]{{\tt http://flash.lakeheadu.ca/$\sim$avantuyl/research/ACMexamples\_Guardo\_VanTuyl.html}}

\noindent {\bf Acknowledgments.} The second author acknowledges
the support provided by NSERC.  The authors also thank T\`ai
H\`a for his comments on an earlier draft of this project.


\section{Preliminaries}
We begin by recalling some relevant
results about points in a multiprojective space.
A more thorough introduction to points in a multiprojective space
can be found in \cite{VT1,VT2}. In this paper $k$ denotes
an algebraically closed field of characteristic zero.

We shall write $(i_1,\ldots,i_r)\in \N^r$ as $\ui$. We induce a
partial order on the set $\N^r$ by setting $(a_1,\ldots,a_r)
\succeq (b_1,\ldots,b_r)$ if $a_i \geq b_i$ for $i=1,\ldots,r$.
The coordinate ring of the {\bf multiprojective space} $\pnr$ is
the $\N^r$-graded ring
\[R=k[x_{1,0},\ldots,x_{1,n_{1}},x_{2,0},\ldots,x_{2,n_{2}},
\ldots,x_{r,0},\ldots,x_{r,n_{r}}]\] where $\deg x_{i,j} = e_i$,
the $i$th standard basis vector of $\N^r$. We use $R = k[x_0,\ldots,x_n,y_0,\ldots,y_m]$ if considering the
multiprojective space $\pnpm$.  If
\[ P =[a_{1,0}:\cdots:a_{1,n_1}] \times \cdots \times [a_{r,0}:\cdots:a_{r,n_r}] \in \pnr\]
is a {\bf point} in this space, then the ideal $I_P$ of $R$
associated to $P$ is a prime ideal of the form \[I_{P}
=(L_{1,1},\ldots,L_{1,n_1},\ldots,L_{r,1},\ldots,L_{r,n_r})\]
where $\deg L_{i,j}=e_i$ for $j=1,\ldots,n_i$. When
$\X=\{P_1,\ldots,P_s\}$ is a set of $s$ distinct points in $\pnr$,
then $I_{\X}=I_{P_1}\cap \cdots \cap I_{P_s}$, where $I_{P_i}$ is
the ideal associated to the point $P_i$, is the ideal generated by
all the multihomogeneous forms that vanish at all the points of $\X$.
The ideal $I_{\X}$ is a {\bf multihomogeneous} (or simply, {\bf
homogeneous}) ideal of $R$.

\begin{theorem} \label{depthdim}
Let $\X\subseteq \pnr$ be a finite set of points. Then
\[\dim R/I_{\X} = r  ~~\mbox{and} ~~ 1 \leq \depth R/I_{\X} \leq r.\]
In fact, for any $l \in \{1,\ldots,r\}$, there exists a set
of points $\X_l$ with $\depth R/I_{\X_l} = l$.
\end{theorem}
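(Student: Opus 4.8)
The plan is to split the statement into three parts. First, the equality $\dim R/I_{\X} = r$: since $\X$ is a finite union of points $P_1,\ldots,P_s$, we have $I_{\X} = \bigcap_i I_{P_i}$, so $\dim R/I_{\X} = \max_i \dim R/I_{P_i}$. Each $I_{P_i}$ is generated by the $n_1+\cdots+n_r$ linear forms $L_{i,j}$ with $\deg L_{i,j} = e_i$, and the linear forms coming from the $i$th group are linearly independent modulo the others; hence $R/I_{P_i}$ is a polynomial ring in $r$ variables (one in each degree $e_1,\ldots,e_r$), so $\dim R/I_{P_i} = r$ for every $i$. Thus $\dim R/I_{\X} = r$. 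The lower bound $\depth R/I_{\X}\ge 1$ is immediate because $R/I_{\X}$ has finite length in no single degree but is a nonzero finitely generated module over the Noetherian ring $R$; more concretely, a generic linear form of degree $e_1$ (say) avoids all the associated primes $I_{P_i}$, since each $I_{P_i}$ contains no element of degree $e_1$ other than those vanishing on a proper subvariety, so it is a nonzerodivisor on $R/I_{\X}$. The upper bound $\depth R/I_{\X}\le r$ is just the Auslander--Buchsbaum inequality $\depth\le\dim$ together with the dimension computation above.

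The substantive part is the existence, for each $l\in\{1,\ldots,r\}$, of a set of points $\X_l$ with $\depth R/I_{\X_l}=l$. The natural construction is to work in $\prthree$-type spaces and to build $\X_l$ as a disjoint union of a ``generic'' configuration living in a coordinate sub-multiprojective space. For $l=r$ one takes any ACM configuration (for instance a single point, or a complete intersection grid of points), which has depth $r$. For $l=1$ one wants a configuration that is as far from ACM as possible; the standard example (see \cite{VT1,VT2}) is two points whose supports differ in every coordinate, e.g. $P=[1:0:\cdots]\times\cdots\times[1:0:\cdots]$ and $Q=[0:1:\cdots]\times\cdots\times[0:1:\cdots]$, for which one checks directly from a free resolution, or from the Hilbert function, that $\depth R/I_{\X}=1$. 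To interpolate, I would take $\X_l$ to be a union of a depth-$1$ configuration sitting inside the last $r-l+1$ coordinates together with enough extra structure (a product with fixed points, or a grid) in the first $l-1$ coordinates so that the first $l-1$ coordinate variables provide a regular sequence while the $(l-1+1)$st does not; a clean way to realize this is via a tensor/join construction on coordinate rings, using that depth is additive in an appropriate sense under such products.

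The main obstacle will be verifying the exact value of the depth for the interpolating examples $\X_l$, $1<l<r$: upper bounds on depth are easy to get (exhibit a suitable nonzerodivisor chain and show it cannot be extended, or produce a socle element in low degree), but the lower bound $\depth\ge l$ requires actually producing a regular sequence of length $l$ on $R/I_{\X_l}$. I expect to handle this either by explicitly writing down $l$ linear forms and checking the colon ideals by hand in the small cases and then invoking a product construction for the general case, or by computing the $\N^r$-graded Betti numbers (or local cohomology) of the chosen configuration and reading off $\depth R/I_{\X_l} = \dim R - \operatorname{pd} R/I_{\X_l}$ via Auslander--Buchsbaum. Either way the combinatorics of the resolution of the ``difference of two points in every coordinate'' configuration is the technical heart, and the interpolation is then a matter of bookkeeping with direct sums / tensor products of such pieces.
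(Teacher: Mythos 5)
Your treatment of the first two claims is correct and is essentially what the paper does (the paper computes $\dim R/I_{\X}=r$ from the fact that each prime $I_{P_i}$ has height $\sum n_i$, and simply cites \cite[Proposition 2.6]{VT2} for everything about the depth). Your dimension argument via $R/I_{P_i}\cong k[z_1,\ldots,z_r]$, the prime-avoidance argument for $\depth\geq 1$ (valid since $I_{\X}$ is radical, so its associated primes are exactly the $I_{P_i}$, and $k$ is infinite), and $\depth\leq\dim$ are all fine.

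The gap is in the existence of $\X_l$ with $\depth R/I_{\X_l}=l$: what you have written is a plan, not a proof, and you say so yourself (``I expect to handle this either by\ldots''). The plan itself is the right one --- the paper's Remark 2.3 asserts that $|\X_l|=2$ suffices, and the padding construction you describe is carried out explicitly in the unnumbered example of Section~3, where fixing a coordinate to a single point adds exactly one to the depth because the corresponding $x_{k,0}$ splits off as a free variable. But the technical heart, namely that two points of $\pnr$ differing in \emph{every} coordinate give $\depth R/I_{\X}=1$, is exactly the step you leave open, and neither a resolution computation nor ``bookkeeping with tensor products'' is supplied. This step does not require computing a resolution: if $P$ and $Q$ differ in every coordinate, then for each $i$ the degree-$e_i$ parts of $I_P$ and $I_Q$ together span all of $R_{e_i}$, so $I_P+I_Q$ contains every variable and $R/(I_P+I_Q)\cong k$, which has depth $0$. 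The short exact sequence
\[
0 \rightarrow R/I_{\X} \rightarrow R/I_P \oplus R/I_Q \rightarrow R/(I_P+I_Q) \rightarrow 0
\]
together with the depth lemma (since $\depth\bigl(R/I_P\oplus R/I_Q\bigr)=r>0+1-1$, i.e.\ $\depth$ of the cokernel is strictly less than that of the middle term) forces $\depth R/I_{\X}=0+1=1$. With that in hand, taking this two-point configuration in $r-l+1$ of the factors and a single fixed point in the remaining $l-1$ factors gives $\depth=(l-1)+1=l$, completing your interpolation. As written, though, the existence claim is not established.
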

\begin{proof}
Because each prime ideal $I_{P_i}$ with $P_i \in \X$ has height
$\sum_{i=1}^r n_i$, it follows that $\dim R/I_{\X} =r$.
For the statement about the depth, see \cite[Proposition 2.6]{VT2}.
\end{proof}

\begin{definition} A set of points $\X \subseteq \pnr$ is {\bf arithmetically Cohen-Macaulay} (ACM) if
$\dim R/I_{\X} = \depth R/I_{\X} =r$, that is, if $R/{\Ix}$
is Cohen-Macaulay (CM).
\end{definition}

\begin{remark}
When $r = 1$, it is clear from Theorem \ref{depthdim} that $\X$ is
always ACM.   However, when $r \geq 2$, it is possible that
 $\depth R/I_{\X} < \dim R/I_{\X}$, and consequently,
$\X$ will not be ACM.  The sets $\X_l$ in Theorem
\ref{depthdim} can be constructed with the property
that $|\X_l| = 2$. \end{remark}

We will periodically require the following result which guarantees
the existence of a regular sequence of specific degrees.

\begin{theorem}\label{regularsequence}
Suppose that $\X$ is set of points in $\pnr$ with $\depth R/I_{\X}
= l \geq 1$. Then there exist elements
$\overline{L}_1,\ldots,\overline{L}_l$ in $R/\Ix$ such that
$L_1,\ldots,L_l$ give rise to a regular sequence in $R/\Ix$ and
$\deg L_i = e_i$ for $i=1,\ldots,l$. \end{theorem}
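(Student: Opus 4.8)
The plan is to induct on $l = \depth R/I_{\X}$, building the regular sequence one linear form at a time, where at each stage the key point is that a \emph{generic} linear form of degree $e_i$ avoids all the relevant associated primes. First I would recall the standard fact from commutative algebra: if $M$ is a finitely generated graded module over a Noetherian graded ring and $\depth M \geq 1$, then a linear form $L$ (in any prescribed degree for which such forms exist) is a nonzerodivisor on $M$ precisely when $L$ avoids every associated prime of $M$; since $\operatorname{Ass}(M)$ is finite and the field $k$ is infinite (it is algebraically closed of characteristic zero by the conventions in Section 2), prime avoidance in the graded setting guarantees such an $L$ exists in degree $e_i$ \emph{provided} no associated prime contains the entire degree-$e_i$ component $R_{e_i}$. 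So the first step is to verify this proviso: the associated primes of $R/I_{\X}$ are exactly the $I_{P_j}$ for $P_j \in \X$ (since $I_{\X}$ is a finite intersection of these primes and $R/I_{\X}$ has no embedded primes, as each $I_{P_j}$ has the same height $\sum n_t$), and each $I_{P_j}$, being generated by $n_i$ of the variables-worth of linear forms in block $i$, does \emph{not} contain all of $R_{e_i}$ — a generic $k$-combination of the $x_{i,0},\dots,x_{i,n_i}$ is not in $I_{P_j}$. Hence a generic linear form $\overline{L}_1$ of degree $e_1$ is a nonzerodivisor on $R/I_{\X}$.

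The inductive step then proceeds as follows. Suppose we have already constructed $\overline{L}_1,\dots,\overline{L}_{i-1}$ forming a regular sequence in $R/I_{\X}$ with $\deg L_t = e_t$, where $i - 1 < l$. Set $A = R/(I_{\X} + (L_1,\dots,L_{i-1}))$; since a regular sequence drops depth by exactly its length, $\depth A = l - (i-1) \geq 1$, so $A$ has a nonzerodivisor and it remains only to find one in degree $e_i$. As in the base case, this reduces to showing that no associated prime $\mathfrak p$ of $A$ contains $R_{e_i}$. This is the crux. The point is that $\dim A = r - (i-1) \geq r - (l-1) \geq 1$, and one checks that $A$ still has no component supported at the irrelevant ideal: concretely, if some $\mathfrak p \in \operatorname{Ass}(A)$ contained all of $R_{e_i}$, then — because $\mathfrak p$ also contains $L_1,\dots,L_{i-1}$, which span generic lines in blocks $1,\dots,i-1$ — by continuing to add generic linear forms in the remaining blocks one would force $\mathfrak p$ to contain a system of parameters bringing $\dim(R/\mathfrak p)$ below what the depth bound permits, contradicting the Auslander–Buchsbaum–style inequality $\depth A \leq \dim(R/\mathfrak p)$ for $\mathfrak p \in \operatorname{Ass}(A)$. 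Thus $R_{e_i} \not\subseteq \mathfrak p$ for every associated prime, a generic linear form $\overline{L}_i$ of degree $e_i$ works, and the induction continues up to $i = l$.

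The main obstacle is precisely the verification in the inductive step that after quotienting by the partial regular sequence $L_1,\dots,L_{i-1}$ no associated prime of the quotient swallows the whole degree-$e_i$ piece; the base case is easy because we understand $\operatorname{Ass}(R/I_{\X})$ explicitly, but after the quotient the associated primes are no longer the coordinate primes of points and must be controlled indirectly via the depth. An alternative, and probably cleaner, route to this same step is to avoid associated primes altogether and argue geometrically: choosing the $L_t$ generically corresponds to a generic linear change of coordinates in each $\mathbb P^{n_t}$ followed by a generic hyperplane section, and one invokes a Bertini-type or generic-hyperplane-section statement together with the characterization $\depth = r$ minus the number of sections that fail to be nonzerodivisors. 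In either formulation the heart of the matter is that genericity of the $L_i$ in their prescribed degrees $e_i$ suffices to realize the full depth, which is exactly the content cited from \cite[Proposition 2.6]{VT2} underlying Theorem \ref{depthdim}; indeed the simplest write-up may just invoke that proposition, whose proof already produces such a sequence.
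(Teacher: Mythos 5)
Your base case is fine: $I_{\X}$ is an unmixed radical ideal, so $\operatorname{Ass}(R/I_{\X})=\{I_{P_1},\ldots,I_{P_s}\}$, each $(I_{P_j})_{e_i}$ is a proper subspace of $R_{e_i}$, and graded prime avoidance over the infinite field $k$ produces the first nonzerodivisor. The genuine gap is exactly at the point you call the crux, and your proposed resolution of it does not work. Suppose $\mathfrak{p}\in\operatorname{Ass}(A)$ with $A=R/(I_{\X}+(L_1,\ldots,L_{i-1}))$ and $\mathfrak{p}\supseteq R_{e_i}$. Then $\mathfrak{p}$ contains some minimal prime $I_{P_j}$ of $I_{\X}$, and in $R/I_{P_j}\cong k[z_1,\ldots,z_r]$ (with $\deg z_t=e_t$) the image of $\mathfrak{p}$ contains $z_1,\ldots,z_{i-1}$ (the images of the generic $L_t$) together with $z_i$ (the image of $R_{e_i}$), so $\dim R/\mathfrak{p}\le r-i$. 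The inequality $\depth A\le\dim R/\mathfrak{p}$ gives $\dim R/\mathfrak{p}\ge l-i+1$. These two bounds contradict each other only when $l=r$; for $l<r$ they are simultaneously satisfiable (for instance $\mathfrak{p}$ equal to the preimage of $(z_1,\ldots,z_i)$ has dimension $r-i\ge l-i+1$), so nothing you have written rules out an embedded associated prime of $A$ containing all of $R_{e_i}$. Your suggestion to ``continue adding generic linear forms in the remaining blocks'' so as to push $\dim(R/\mathfrak{p})$ below the permitted value is not a legitimate move: $\mathfrak{p}$ is a fixed associated prime of the fixed module $A$, and enlarging the ideal you quotient by neither changes $\operatorname{Ass}(A)$ nor forces $\mathfrak{p}$ to contain the newly chosen generic forms.

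Consequently your argument establishes the theorem only in the ACM case $l=r$, which is precisely the case covered by the cited \cite[Proposition 3.2]{VT2}; the content of the present statement is the adaptation to $1\le l<r$ (the paper uses $l=r-1$ in Theorem \ref{depth=r-1}), and that adaptation is exactly what your inductive step fails to supply. To close the gap one must control the embedded primes of $R/(I_{\X}+(L_1,\ldots,L_{i-1}))$ for generic $L_t$ by some further input --- a generic-hyperplane-section or Bertini-type statement tracking associated primes (or local cohomology) under generic sections, or the explicit multigraded analysis behind \cite{VT2} --- rather than by the depth inequality alone.
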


\begin{proof}
One can adapt the proof of \cite[Proposition 3.2]{VT2} to get the desired conclusion.
\end{proof}

\begin{remark} \label{regularsequenceremark}
By Theorem \ref{regularsequence}, when $\X$ is an ACM set of
points in $\pnr$, then there exists $L_1,\ldots,L_r$ with $\deg
L_i = e_i$ such that the $L_i$'s give rise to a regular sequence.
After a change of coordinates, we may assume that $L_i =
x_{i,0}$ for $i = 1,\ldots,r$. Note that $x_{i,0}$ does not vanish
at any point of $\X$.  Furthermore, since each $x_{i,0}$ is
homogeneous, any permutation of $\{x_{1,0}, \ldots,x_{r,0}\}$ is
also a regular sequence on $R/I_{\X}$.
\end{remark}

The {\bf coordinate ring of $\X$}, that is, $R/I_{\X}$,  inherits
the $\N^r$-grading of $R$.  We can then apply the following
definition:

\begin{definition} Let $I$ be a multihomogeneous ideal
of $R$. The {\bf Hilbert function of $S = R/I$ } is the numerical
function $H_{S}:\N^r \rightarrow \N$ defined by \[H_S(\ui):=
\dim_{k} S_{\ui} = \dim_{k} R_{\ui} - \dim_{k}(I)_{\ui}~~\mbox{for
all $\ui \in \N^r$.}\] When $S = R/I_{\X}$ is the coordinate ring
of a set of points $\X$, then we usually say $H_S$ is the {\bf
Hilbert function of } $\X$, and write $H_{\X}$.
\end{definition}

\begin{remark} An interesting open problem is to classify
what functions can be the Hilbert function of a set of points in
$\pnr$.  When $r=1$, then the set of valid Hilbert functions for
sets of points in $\pr^n$ was first classified by Geramita,
Maroscia, and Roberts \cite{gmr}. However, very few results are known if $r
\geq 2$.  See \cite{GuMaRa,VT1,VT2} for more on this problem, and
some necessary conditions.\end{remark}

In the study of the Hilbert functions of points in $\pr^n$,
one can use the first difference Hilbert function to ascertain
certain geometric and algebraic information about the set of points.  As we shall
see in the next section, a generalized first difference
Hilbert function is a tool that provides
information about the ACMness of sets of points in $\pnr$.
The definition that we shall require is:

\begin{definition}
If $H:\N^r \rightarrow \N$ is a numerical function,
then the {\bf first difference function of $H$}, denoted $\Delta H$, is defined to be
\[
\Delta H(\ui) := \sum_{\underline{0}~ \preceq~ \underline{l} =
(l_1,\ldots,l_r) \preceq (1,\ldots,1)} (-1)^{|\underline{l}|} H(i_1-l_1,\ldots,
i_r-l_r)
\]
where $H(\uj) = 0$ if $ \uj \not\succeq \underline{0}$ and
$|\underline{l}| = l_1 + \cdots + l_r$.
\end{definition}

Note that when $r=1$, we recover the well known first difference function
$\Delta H(i) = H(i) - H(i-1)$ where $H(i) = 0$ if $i < 0$.


\section{ACM sets of points and their Hilbert functions}

In this section we revisit a classification of ACM sets of points
in $\popo$ due to Giuffrida, Maggioni, and Ragusa \cite{GuMaRa}:

\begin{theorem} \label{GMR-main}
Let $\X \subseteq \popo$ be a finite set of points
with Hilbert function $H_{\X}$.
Then $\X$ is ACM if and only if
$\Delta H_{\X}$ is the Hilbert function
of a bigraded artinian quotient of $k[x_1,y_1]$.
\end{theorem}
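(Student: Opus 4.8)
The plan is to prove both directions by relating the Hilbert function of $R/I_\X$ to that of a quotient obtained by modding out a regular sequence. Assume first that $\X$ is ACM. By Theorem \ref{regularsequence} (and Remark \ref{regularsequenceremark}), there is a regular sequence $x_{1,0}, x_{2,0}$ on $R/I_\X$ with $\deg x_{i,0} = e_i$. Set $A = R/(I_\X + (x_{1,0}, x_{2,0}))$. Since $x_{1,0}$ is a nonzerodivisor of degree $e_1$, quotienting by it subtracts a shifted copy of the Hilbert function; doing this again with $x_{2,0}$ of degree $e_2$ shows that the Hilbert function of $A$ is exactly $\Delta H_\X$. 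As $R/I_\X$ has Krull dimension $2$, the quotient $A$ by a maximal regular sequence is artinian, and since $A$ is a bigraded quotient of $R/(x_{1,0},x_{2,0}) \cong k[x_{1,1},\ldots,x_{1,n_1}, x_{2,1},\ldots,x_{2,n_1}]$, in the $\popo$ case $A$ is a bigraded artinian quotient of $k[x_1, y_1]$. This gives one direction; note it is essentially the specialization of Theorem \ref{theorem1} to $r = 2$, $n_1 = n_2 = 1$, with the extra observation that artinianness holds because $\dim R/I_\X = r$.

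For the converse, suppose $\Delta H_\X$ is the Hilbert function of a bigraded artinian quotient of $k[x_1,y_1]$. The strategy is to produce the regular sequence forcing ACMness. First, $H_\X(i,j) = |\X|$ for all $(i,j) \succeq (a,b)$ for suitable $(a,b)$, and a generic linear form $L_1$ of degree $e_1$ vanishes at no point of $\X$, hence is a nonzerodivisor on $R/I_\X$ (the associated primes are the $I_{P_i}$, and a generic such $L_1$ avoids all of them). So we immediately reduce to showing that $\overline{L_2}$ is a nonzerodivisor on $B := R/(I_\X + (L_1))$, i.e. that $\depth B \geq 1$, equivalently that the irrelevant ideal is not an associated prime of $B$. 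The key point is that the Hilbert function of $B$ equals $\Delta^{(1)} H_\X(i,j) := H_\X(i,j) - H_\X(i-1,j)$, and one must show that $H_B$ eventually stabilizes in the $j$-direction to a function that is itself the Hilbert function of a $0$-dimensional ring, so that a second generic linear form $L_2$ of degree $e_2$ can be used to cut down to something artinian; the hypothesis on $\Delta H_\X = \Delta^{(1)}_{e_2}(\Delta^{(1)}_{e_1} H_\X)$ being the Hilbert function of an artinian quotient of $k[x_1,y_1]$ is what guarantees the numerics work out, in particular that the alternating sum defining $\Delta H_\X$ is nonnegative and vanishes for large indices.

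The cleanest route for the converse, and the one I would pursue, is to go through Theorem \ref{depth=r-1}: in $\popo = \pr^1 \times \pr^1$ we have $r = 2$, so $\X$ is either ACM (depth $2$) or has $\depth R/I_\X = 1 = r-1$. If $\X$ is not ACM, then by Theorem \ref{depth=r-1} (with $r = 2$), $\Delta H_\X$ is \emph{not} the Hilbert function of an $\N^2$-graded artinian quotient of $k[x_1,y_1]$ — wait, that theorem gives the \emph{converse} for depth $r-1$, so I must instead argue directly. The honest approach: show the contrapositive, that if $\X$ is not ACM then $\Delta H_\X$ fails to be the Hilbert function of an artinian bigraded quotient of $k[x_1,y_1]$. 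When $\depth R/I_\X = 1$, after a generic change of coordinates $x_{1,0}$ is a nonzerodivisor but $x_{2,0}$ is a zerodivisor on $R/(I_\X + (x_{1,0}))$; one computes that $\Delta H_\X(i,j) = H_{R/(I_\X+(x_{1,0}))}(i,j) - H_{R/(I_\X+(x_{1,0}))}(i,j-1)$ and shows this quantity is eventually \emph{negative} (or fails to be eventually zero) precisely because of the positive-dimensional component witnessed by $x_{2,0}$ being a zerodivisor — this contradicts $\Delta H_\X$ being the Hilbert function of an artinian ring, which must be eventually zero and always nonnegative.

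The main obstacle is the converse, and specifically the passage from the numerical hypothesis on $\Delta H_\X$ to the existence of a length-$2$ regular sequence of the prescribed multidegrees: controlling the Hilbert function of $R/(I_\X + (L_1))$ in the $j$-direction and extracting from "$\Delta H_\X$ is eventually $0$ and everywhere $\geq 0$" the conclusion that the irrelevant maximal ideal is not associated to this quotient. I expect this is handled by a careful bookkeeping argument on the bigraded Hilbert function together with the fact, special to two factors, that failure of ACMness is failure of depth by exactly one, so there is a single obstruction to locate.
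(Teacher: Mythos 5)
Your forward direction is correct and is exactly the paper's: ACM gives a regular sequence of degrees $e_1,e_2$ by Theorem \ref{regularsequence}, modding it out realizes $\Delta H_{\X}$ as the Hilbert function of an artinian quotient of $k[x_1,y_1]$, and this is just Theorem \ref{halfACMclassify} specialized to $r=2$, $n_1=n_2=1$.

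For the converse, your first instinct was the right one and your ``wait'' is a false alarm. Theorem \ref{depth=r-1} with $r=2$ says: if $\depth(R/\Ix)=1$ then $\Delta H_{\X}$ is \emph{not} the Hilbert function of a bigraded artinian quotient of $k[x_{1,1},x_{2,1}]$. Since in $\popo$ the depth is either $2$ (ACM) or $1$, this is precisely the contrapositive of the implication you need, and citing it finishes the proof; that is exactly how the paper argues. The problem is that you retracted this and substituted a ``direct'' sketch, and that sketch has a genuine gap at its central step. Knowing that $\overline{x}_{2,0}$ is a zerodivisor on $B=R/(\Ix,x_{1,0})$ gives a nonzero kernel of multiplication by $\overline{x}_{2,0}$ in some degree $\ui$, but the four-term exact sequence only yields $\dim_k(\ker)_{\ui}=H_B(\ui)-H_B(\ui+e_2)+H_{R/(\Ix,x_{1,0},x_{2,0})}(\ui+e_2)$; to conclude $H_B(\ui)>H_B(\ui+e_2)$ somewhere (hence a negative entry of $\Delta H_{\X}$) you must control that last term. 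That is the entire content of Lemmas \ref{lemmaA} and \ref{lemmaB} and the two-case analysis in the proof of Theorem \ref{depth=r-1}: assuming for contradiction that $H_B$ is weakly increasing in the $e_2$-direction, one shows $H_{R/(\Ix,x_{1,0},x_{2,0})}$ takes only the values $0$ and $1$, and takes the value $0$ exactly where $(\Ix)_{\ui}\neq(0)$, which forces the kernel to vanish in every degree and contradicts $\depth(R/\Ix)=1$; Lemma \ref{lemmaB} in turn rests on a delicate analysis of the shape of a minimal generator of $\Ix$ of minimal degree. None of this is supplied by the phrase ``precisely because of the positive-dimensional component,'' so as written the converse is not proved. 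Either restore the citation of Theorem \ref{depth=r-1}, or supply the two lemmas and the case analysis.
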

\noindent As shown by the second author \cite{VT2}, one direction
of this classification extends quite
naturally to an arbitrary multiprojective space, thus giving
us a necessary condition for a set of points to be ACM. However,
what was not known was whether or not  the converse statement held;
we show via an example that the converse fails, thus showing that
ACM sets of points cannot be classified by Hilbert functions. We
also give a new proof for Theorem \ref{GMR-main}.  We begin
by recalling the partial generalization of Theorem \ref{GMR-main}.

\begin{theorem}[{\cite[Corollary 3.5]{VT2}}]\label{halfACMclassify}
Let $\X \subseteq \pnr$ be a finite set of points with Hilbert function $H_{\X}$.
If $\X$ is ACM, then $\Delta H_{\X}$ is the Hilbert function of
an $\N^r$-graded artinian quotient of
$k[x_{1,1},\ldots,x_{1,n_1},\ldots,x_{r,1},\ldots,x_{r,n_r}]$.
\end{theorem}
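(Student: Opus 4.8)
The plan is to produce an explicit artinian quotient of the polynomial ring $A = k[x_{1,1},\ldots,x_{1,n_1},\ldots,x_{r,1},\ldots,x_{r,n_r}]$ (with $\deg x_{i,j} = e_i$) whose Hilbert function is exactly $\Delta H_{\X}$, and to do this via the regular sequence guaranteed by ACMness. Since $\X$ is ACM, Theorem \ref{regularsequence} and Remark \ref{regularsequenceremark} give us a regular sequence $L_1,\ldots,L_r$ on $R/I_{\X}$ with $\deg L_i = e_i$, and after a change of coordinates we may take $L_i = x_{i,0}$. First I would set $B := R/(I_{\X} + (x_{1,0},\ldots,x_{r,0}))$. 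Because $x_{1,0},\ldots,x_{r,0}$ is a regular sequence on $R/I_{\X}$, the standard iterated short exact sequences
\[
0 \longrightarrow (R/I_{\X})(-e_i) \xrightarrow{\ \cdot x_{i,0}\ } R/I_{\X} \longrightarrow R/(I_{\X}, x_{i,0}) \longrightarrow 0
\]
(applied one variable at a time to the successive quotients) show by an easy induction on $r$ that the Hilbert function of $B$ is precisely the $r$-fold first difference of $H_{\X}$, i.e. $H_B = \Delta H_{\X}$.

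Next I would identify $B$ as a quotient of $A$. Killing $x_{1,0},\ldots,x_{r,0}$ from $R$ leaves exactly the polynomial ring $A$ in the remaining variables $x_{i,j}$ with $j \geq 1$, with the inherited $\N^r$-grading $\deg x_{i,j}=e_i$; hence $B = A/J$ where $J$ is the image of $I_{\X}$ in $A$. It remains to check that $B$ is artinian, equivalently that $\dim B = 0$. This is immediate from the dimension count: $\dim R/I_{\X} = r$ by Theorem \ref{depthdim}, and we have quotiented by a regular sequence of length $r$, so $\dim B = 0$; since $B$ is a finitely generated graded $k$-algebra of dimension $0$ it is artinian. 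Thus $\Delta H_{\X} = H_B$ is the Hilbert function of the $\N^r$-graded artinian quotient $A/J$ of $A$, as required.

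The only genuinely delicate point is the bookkeeping in the induction showing $H_B = \Delta H_{\X}$: one must peel off the variables $x_{1,0},\ldots,x_{r,0}$ in order, noting at each stage that the element being factored out remains a nonzerodivisor on the current quotient (which is exactly the content of "$L_1,\ldots,L_r$ is a regular sequence," together with the permutation remark in Remark \ref{regularsequenceremark}), and that each such quotient contributes one factor of a one-variable first difference in the corresponding coordinate. Tracking the alternating signs and the shifted arguments $H(i_1-l_1,\ldots,i_r-l_r)$ then reproduces exactly the definition of $\Delta H$; this is a routine but slightly fussy computation that I would carry out carefully. Everything else — the change of coordinates, the identification of the residual ring with $A$, and the dimension argument for artinianness — is formal and presents no obstacle.
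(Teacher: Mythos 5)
Your proposal is correct and follows essentially the same route as the paper's own (sketched) proof: use the regular sequence $x_{1,0},\ldots,x_{r,0}$ of multidegrees $e_1,\ldots,e_r$ guaranteed by ACMness, peel it off via the iterated multiplication short exact sequences to identify $H_{R/(I_{\X},x_{1,0},\ldots,x_{r,0})}$ with $\Delta H_{\X}$, identify the residual ring with a quotient of $k[x_{1,1},\ldots,x_{r,n_r}]$, and conclude artinianness from the fact that a length-$r$ regular sequence has been factored out of an $r$-dimensional ring. No gaps beyond the same routine bookkeeping the paper itself elides.
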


\begin{proof} We sketch out the idea of the proof.
Because $\X$ is ACM, by Theorem \ref{regularsequence} there exists
a regular sequence $\overline{L}_1,\ldots,\overline{L}_r \in
R/I_{\X}$ with $\deg L_i = e_i$.  Let $J_0 = I_{\X}$ and $J_i =
(J_{i-1},L_i)$ for $i=1,\ldots,r$. Then, for each $i$ we have a
short exact sequence
\[0 \rightarrow R/J_{i-1}(-e_i) \stackrel{\times \overline{L}_i}{\longrightarrow}
R/J_{i-1} \longrightarrow R/J_i \rightarrow 0.\] Using the $r$
short exact sequences, one can show that
$\Delta H_{\X}$ is the Hilbert function of
$R/J_r$.  Furthermore, because $R/I_{\X}$ is
ACM and $J_r = I_{\X} + (L_1,\ldots,L_r)$, we have that $R/J_r$ is
artinian. By Remark \ref{regularsequenceremark}, if we make a change of coordinates
so that $L_i
= x_{i,0}$, then
\[R/J_r \cong
(R/(x_{1,0},\ldots,x_{r,0}))/((I_{\X},x_{1,0},\ldots,x_{r,0})/(x_{1,0},\ldots,x_{r,0}))
\]
and $R/(x_{1,0},\ldots,x_{r,0}) \cong  k[x_{1,1},\ldots,x_{1,n_1},\ldots,x_{r,1},\ldots,x_{r,n_r}]$.
\end{proof}

The following two examples show that the converse to Theorem \ref{halfACMclassify}
is not true in general;  these examples are the first known counterexamples
to the converse statement.

\begin{example}\label{example1}
Let $P_1,\ldots,P_6$ be six points in general position in $\pr^2$.
By general position we mean that no more than two points lie on
line, and no more than five points lie on a conic. Set $Q_{i,j} := P_i
\times P_j \in \pr^2 \times \pr^2$, and let $\X$ be the following
set of 27 points:
\begin{eqnarray*}
\X &=& \{Q_{1,1},Q_{1,2},Q_{1,3},Q_{1,4},Q_{1,5},Q_{1,6},Q_{2,1},Q_{2,3},Q_{2,4},Q_{2,6}, Q_{3,1},Q_{3,2},Q_{3,5},
Q_{3,6}, \\
&&Q_{4,1},Q_{4,2},Q_{4,5},Q_{4,6}, Q_{5,1},Q_{5,3},Q_{5,6},Q_{6,1},Q_{6,2},Q_{6,3},Q_{6,4},Q_{6,5},Q_{6,6}\}.
\end{eqnarray*}
Then $\X$ is not ACM since $R/I_{\X}$ has projective dimension 5 (and not 4 for
$R/I_{\X}$ to be Cohen-Macaulay)
because the minimal graded resolution is
\[
0 \rightarrow R \rightarrow R^{13} \rightarrow R^{38} \rightarrow
R^{42} \rightarrow R^{17} \rightarrow R \rightarrow R/\Ix \rightarrow 0\] where we have
suppressed the bigraded shifts.
  For this set of points, $H_{\X}$ and $\Delta H_{\X}$ are
\[H_{\X} =
\begin{bmatrix}
1 & 3 & 6 & 6  & \cdots \\
3 & 9 & 18 & 18 & \cdots \\
6 & 18 & 27 & 27 & \cdots \\
6 & 18 & 27 & 27 & \cdots \\
\vdots & \vdots &\vdots & \vdots &\ddots
\end{bmatrix}
~~\text{and}~~ \Delta H_{\X} =
\begin{bmatrix}
1 & 2 & 3 & 0 & \cdots \\
2 & 4 & 6 & 0 & \cdots \\
3 & 6 & 0 & 0 & \cdots \\
0 & 0 & 0 & 0 & \cdots \\
\vdots&\vdots&\vdots&\vdots&\ddots
\end{bmatrix}.\]
(The $(i,j)$th entry of the above matrix corresponds to the value of the Hilbert
function at $(i,j)$,
where we start our indexing at $(0,0)$.)
However, $\Delta H_{\X}$ equals $H_{S/I}$, the Hilbert function of $S/I$ where $S = k[x_1,x_2,y_1,y_2]$
and
\[ I = (x_1,x_2)^3+(y_1,y_2)^3 + (x_1,x_2)^2(y_1,y_2)^2.\]
Note that $S/I$ is artinian since $H_{S/I}(\ui) = 0$ for all but a finite number of $\ui \in \N^2$.
This shows that the converse of Theorem \ref{halfACMclassify} cannot hold because $\Delta H_{\X}$ is the
Hilbert function of a bigraded artinian quotient, but $\X$ is not ACM.
\end{example}

As the following example illustrates, we cannot expect any general
classification of ACM sets of points to be based solely upon the
Hilbert function.

\begin{example}\label{example2}
Let $P_{ij} = [1:i:j] \in \pr^2$, and let $Q_{ijkl}= P_{ij} \times P_{kl} \in \pr^2 \times \pr^2$.
Consider the following 27 points in $\pr^2 \times \pr^2$:
\begin{eqnarray*}
 \Y &= &\{Q_{1121},Q_{1122},Q_{1131},Q_{1221},Q_{1222},Q_{1231},Q_{1321},Q_{1322},Q_{1331}, \\
&& Q_{2111},Q_{2112},Q_{2113}, Q_{2121},Q_{2122},Q_{2131},Q_{2211},Q_{2212},Q_{2213},\\
&&Q_{2221},Q_{2222},Q_{2231}, Q_{3111},Q_{3112},Q_{3113},Q_{3121},Q_{3122},Q_{3131}\}.
\end{eqnarray*}
Using {\tt CoCoA} to compute the resolution of $R/I_{\Y}$ we get
\[0 \rightarrow  R^{12} \rightarrow R^{38} \rightarrow R^{42} \rightarrow R^{17} \rightarrow R \rightarrow R/I_{\Y}
\rightarrow 0,\] where we have suppressed all the bigraded shifts.
So $\Y$ is ACM because the projective dimension is four. If $\X$
is the set of 27 nonACM points from the last example, then
\[H_{\X} = H_{\Y} = \begin{bmatrix}
1 & 3 & 6 & 6  & \cdots \\
3 & 9 & 18 & 18 & \cdots \\
6 & 18 & 27 & 27 & \cdots \\
6 & 18 & 27 & 27 & \cdots \\
\vdots & \vdots &\vdots & \vdots &\ddots
\end{bmatrix}.\]
So ACM and nonACM sets of points can have the exact same Hilbert function.
\end{example}

We can  extend the above examples to show that
the converse of Theorem \ref{halfACMclassify}
fails to hold in any multiprojective space $\pnr$ with $r \geq 2$
and with at least two $n_i$'s greater than or equal to two.

\begin{example}
Consider the multiprojective space $\pnr$ with $r \geq 2$.  Suppose further
that $n_i,n_j \geq 2$ with $i \neq j$.
Set $Q_k = [1:0:\cdots:0] \in \pr^{n_k}$
for $k \in \{1,\ldots,r\}\backslash \{i,j\}$.  For
any point $P =[a_{1}:a_{2}:a_{3}] \in \pr^2$, let $P' = [a_{1}:a_{2}:a_{3}:0:\cdots:0]
\in \pr^{n_i}$, and $P'' = [a_{1}:a_{2}:a_{3}:0:\cdots:0]
\in \pr^{n_j}$.
Let $\X$ be the set of points from Example \ref{example1}.
Consider the following set of points:
\[\X' = \{Q_1\times \cdots \times P'_i \times \cdots \times P''_j \times \cdots \times Q_r ~|~
P_i \times P_j \in \X\} \subseteq \pnr.\] If the point $P_i \times
P_j \in \X \subseteq \pr^2 \times \pr^2$ has defining ideal
$(L_{i,1},L_{i,2},L_{j,1},L_{j,2})$ in
$k[x_0,x_1,x_2,y_0,y_1,y_2]$, then let $L'_{i,t}$, respectively
$L'_{j,t}$, denote the forms we obtain replacing $x_{t}$ with
$x_{i,t}$, respectively, $y_t$ with $x_{j,t}$.   The defining
ideal of $Q_1\times \cdots \times P'_i \times \cdots \times P''_j
\times \cdots \times Q_r \in \X'$ has form
\[(x_{1,1},\ldots,x_{1,n_1},\ldots,L'_{i,1},L'_{i,2},x_{i,3},\ldots,x_{i,n_i},\ldots
L'_{j,1},L'_{j,2},x_{j,3},\ldots,x_{j,n_j},\ldots,x_{r,1},\ldots,x_{r,n_r}).\]
So, we have
\[R/I_{\X'} \cong k[x_{1,0},x_{2,0},\ldots,x_{i,0},x_{i,1},x_{i,2},\ldots,x_{j,0},x_{j,1},x_{j,2},
\ldots,x_{r,0}]/\tilde{I}_{\X}\]
where by $\tilde{I}_{\X}$ we mean the ideal generated by the elements of $I_{\X} \subseteq
k[x_0,x_1,x_2,y_0,y_1,y_2]$, where we replace $x_t$ by $x_{i,t}$ and $y_t$ by
$x_{j,t}$.  The elements $x_{1,0},\ldots,\hat{x}_{i,0},\ldots,\hat{x}_{j,0},\ldots,x_{r,0}$
then form a regular sequence so that
\[ \frac{k[x_{1,0},x_{2,0},\ldots,x_{i,0},x_{i,1},x_{i,2},\ldots,x_{j,0},x_{j,1},x_{j_2},
\ldots,x_{r,0}]}{(\tilde{I}_{\X},x_{1,0},\ldots,\hat{x}_{i,0},\ldots,\hat{x}_{j,0},\ldots,x_{r,0})} \cong
k[x_0,x_1,x_2,y_0,y_1,y_2]/I_{\X}.\]
Then $\X'$ will not be ACM because
\[\operatorname{depth}(R/I_{\X'}) = r-2 +
\operatorname{depth}(k[x_0,x_1,x_2,y_0,y_1,y_2]/I_{\X}) = r-1.\]
However, $\Delta H_{\X'}$ is the Hilbert function
of an artinian quotient since
\[\Delta H_{\X'}(\ui) =
\begin{cases}
\Delta H_{\X}(a,b) & \mbox{if $\ui = ae_i + be_j = (0,\ldots,a,\ldots,b,\ldots,0)$} \\
0 & \mbox{otherwise}
\end{cases}\]
and this function is the Hilbert function of $S/I$ where
\begin{eqnarray*}
I &=& (x_{i,1},x_{i,2})^3+(x_{j,1},x_{j,2})^3 + (x_{i,1},x_{i,2})^2(x_{j,1},x_{j,2})^2 +
(x_{i,3},\ldots,x_{i,n_i}) + \\
&& (x_{j,3},\ldots,x_{j,n_j})+
 S_{e_1} + S_{e_2}+\cdots +\hat{S}_{e_i} + \cdots +\hat{S}_{e_j} + \cdots +S_{e_r}.
\end{eqnarray*}
where $S_{e_l} = (x_{l,1},\ldots,x_{l,n_l})$ and
 $S = k[x_{1,1},\ldots,x_{1,n_1},\ldots,x_{r,1},\ldots,x_{r,n_r}]$.

The set of points $\Y$ in Example \ref{example2} can similarly be extended to a set of points
in $\Y' \subseteq \pnr$ where $H_{\X'}$ and $H_{\Y'}$ are equal.
\end{example}

In light of the above examples, we see that to distinguish ACM
sets of points from nonACM, we will need more information than
just the Hilbert function of the set of points. However, as a
corollary of Theorem \ref{halfACMclassify}, we can eliminate
certain sets of points as being ACM directly from their Hilbert
function. A similar result was also proved in \cite[Theorem
4.7]{SVT}. If $\ui = (i_1,\ldots,i_r), \uj = (j_1,\ldots,j_r) \in
\N^r$, we set $\min\{\ui,\uj\} :=
(\min\{i_1,j_1\},\ldots,\min\{i_r,j_r\}).$

\begin{corollary}  Let $\X \subseteq \pnr$ be a finite set of points with
Hilbert function $H_{\X}$.  If there exists $\ui,\uj \in \N^r$ such
that $H_{\X}(\ui) = H_{\X}(\uj) = |\X|$ but $H_{\X}(\underline{k}) \neq |\X|$
with $\underline{k} = \min\{\ui,\uj\}$,
then $\X$ is not ACM.
\end{corollary}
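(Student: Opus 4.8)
The plan is to prove the contrapositive: assuming $\X$ is ACM, I will show that $H_{\X}(\ui)=H_{\X}(\uj)=|\X|$ forces $H_{\X}(\underline{k})=|\X|$ for $\underline{k}=\min\{\ui,\uj\}$, contradicting the hypothesis. Since $\X$ is ACM, Theorem \ref{halfACMclassify} produces an $\N^r$-graded \emph{artinian} quotient $A=S/I$ of $S=k[x_{1,1},\ldots,x_{1,n_1},\ldots,x_{r,1},\ldots,x_{r,n_r}]$ with $H_A=\Delta H_{\X}$. The engine of the argument is the elementary inversion identity
\[
H_{\X}(\ui)=\sum_{\underline{0}\,\preceq\,\underline{a}\,\preceq\,\ui}\Delta H_{\X}(\underline{a}),
\]
which is the $r$-fold iterate of the one-variable fact $\sum_{j\le i}\Delta H(j)=H(i)$; in operator terms it comes from inverting $\Delta=\prod_{t=1}^{r}(1-\sigma_{e_t})$ termwise, where $\sigma_{e_t}$ is the shift $(\sigma_{e_t}H)(\ui)=H(\ui-e_t)$, using that $H_{\X}(\uj)=0$ and $\Delta H_{\X}(\uj)=0$ whenever $\uj\not\succeq\underline{0}$.

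Next I would record the following. Set $T:=\sum_{\underline{a}\in\N^r}\Delta H_{\X}(\underline{a})=\dim_k A$, which is finite because $A$ is artinian; moreover $\Delta H_{\X}=H_A$ is nonnegative. By the inversion identity, for every $\ui$ we have $H_{\X}(\ui)=\sum_{\underline{a}\preceq\ui}\Delta H_{\X}(\underline{a})\le T$, with equality if and only if the support $\Sigma:=\{\underline{a}\in\N^r : \Delta H_{\X}(\underline{a})\ne 0\}$ is contained in the box $\{\underline{a}:\underline{a}\preceq\ui\}$. Using the standard fact that the Hilbert function of a finite set of points stabilizes to $|\X|$ in all directions (see \cite{VT1,VT2}), we get $H_{\X}(\ui)=T$ for all $\ui$ with every coordinate large, and such $\ui$ also satisfy $H_{\X}(\ui)=|\X|$; hence $T=|\X|$. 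Consequently
\[
|\X|-H_{\X}(\ui)=\sum_{\underline{a}\,\not\preceq\,\ui}\Delta H_{\X}(\underline{a}),
\]
so $H_{\X}(\ui)=|\X|$ if and only if $\Sigma\subseteq\{\underline{a}:\underline{a}\preceq\ui\}$.

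Now apply this last equivalence twice. From $H_{\X}(\ui)=|\X|$ we get $\Sigma\subseteq\{\underline{a}:\underline{a}\preceq\ui\}$, and from $H_{\X}(\uj)=|\X|$ we get $\Sigma\subseteq\{\underline{a}:\underline{a}\preceq\uj\}$; intersecting, $\Sigma\subseteq\{\underline{a}:\underline{a}\preceq\min\{\ui,\uj\}=\underline{k}\}$. Therefore, by the inversion identity again,
\[
H_{\X}(\underline{k})=\sum_{\underline{a}\,\preceq\,\underline{k}}\Delta H_{\X}(\underline{a})=\sum_{\underline{a}\in\N^r}\Delta H_{\X}(\underline{a})=T=|\X|,
\]
contradicting $H_{\X}(\underline{k})\ne|\X|$, so $\X$ cannot be ACM. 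There is no genuinely hard step here; the only points requiring care are the boundary conditions that make the inversion identity valid, and the identification $\dim_k A=|\X|$, which rests entirely on the standard stabilization of the Hilbert function of a finite point set together with Theorem \ref{halfACMclassify}.
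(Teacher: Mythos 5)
Your proof is correct and takes essentially the same approach as the paper's: both invert $\Delta$ to write $H_{\X}(\ui)=\sum_{\underline{a}\preceq\ui}\Delta H_{\X}(\underline{a})$ and use the nonnegativity of $\Delta H_{\X}$ supplied by Theorem \ref{halfACMclassify} to conclude that the locus where $H_{\X}$ attains $|\X|$ is closed under coordinatewise minimum. You merely make explicit the finite-support and meet-closure details that the paper's terser version leaves implicit.
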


\begin{proof}  For any $\ui \in \N^r$, the Hilbert function of $\X$ satisfies
\[H_{\X}(\ui) = \sum_{\underline{0} \preceq \uj \preceq \ui}\Delta H_{\X}(\uj).\]
When $\X$ is ACM, by Theorem \ref{halfACMclassify} we have $\Delta H_{\X}(\uj) \geq 0$
for all $\uj \in \N^r$.  But then there exists a $\underline{k} \in \N^r$ such
that $H_{\X}(\ui) =|\X| $ if and only if $\ui \succeq \underline{k}$.
\end{proof}

It is not presently known whether the converse of Theorem \ref{halfACMclassify}
fails to hold in multiprojective spaces of the form $\pnr$ with $r \geq 2$ and with
only one $n_i \geq 1$, and the rest of the $n_j$'s equal to one.
We end this section by giving partial evidence that
the converse of Theorem \ref{halfACMclassify}
may hold for sets of points in $\pr^1 \times \cdots \times \pr^1$
($r \geq 3$ times).
This result will also allow us to give a new proof for
Theorem \ref{GMR-main}.

\begin{theorem}\label{depth=r-1}
Let $\X$ be set of points in $\pr^1 \times \cdots \times \pr^1$ ($r \geq 2$ times)
and suppose that $\depth(R/\Ix) = r-1$.  If $H_{\X}$ is the Hilbert function
of $\X$, then $\Delta H_{\X}$ is not the Hilbert function
of an $\N^r$-graded artinian quotient of $k[x_{1,1},x_{2,1},\ldots,x_{r,1}]$.
\end{theorem}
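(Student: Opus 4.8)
The plan is to divide $R/\Ix$ by a maximal regular sequence of ``staircase'' linear forms, isolate the obstruction to Cohen--Macaulayness, and show that it is incompatible with $\Delta H_\X$ being the Hilbert function of an artinian quotient of $k[x_{1,1},\dots,x_{r,1}]$. The point driving everything is that the polynomial ring $k[x_{1,1},\dots,x_{r,1}]$ (with $\deg x_{i,1}=e_i$) is one-dimensional in every multidegree, so any quotient of it has a Hilbert function that is $\{0,1\}$-valued and vanishes outside an order ideal of $\N^r$ (the defining ideal being monomial); hence, if $\Delta H_\X$ equals such a Hilbert function, then $\Delta H_\X$ is itself $\{0,1\}$-valued.

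By Theorem \ref{regularsequence}, since $\depth(R/\Ix)=r-1$ there is a regular sequence $L_1,\dots,L_{r-1}$ on $R/\Ix$ with $\deg L_i=e_i$; as each $L_i\in\langle x_{i,0},x_{i,1}\rangle$, a coordinate change on the first $r-1$ factors makes $L_i=x_{i,0}$, and each $L_i$, being a nonzerodivisor, vanishes at no point of $\X$ (the minimal primes of $R/(\Ix,L_1,\dots,L_{i-1})$ are precisely the primes $I_P+(L_1,\dots,L_{i-1})$, $P\in\X$, which are equidimensional). Put $J_i=(\Ix,x_{1,0},\dots,x_{i,0})$, $A=R/J_{r-1}$, and $\bar R=R/(x_{1,0},\dots,x_{r-1,0})\cong k[x_{1,1},\dots,x_{r-1,1},x_{r,0},x_{r,1}]$. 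Running the first $r-1$ of the short exact sequences $0\to (R/J_{i-1})(-e_i)\xrightarrow{\times x_{i,0}}R/J_{i-1}\to R/J_i\to 0$, exactly as in the proof of Theorem \ref{halfACMclassify}, shows that $H_A$ is the first difference of $H_\X$ in its first $r-1$ coordinates, so $H_A(\ui)-H_A(\ui-e_r)=\Delta H_\X(\ui)$ for all $\ui\in\N^r$; moreover $\dim A=1$ and $\depth A=0$ by Theorem \ref{depthdim}.

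Next, split off the finite-length part. Let $W=H^0_{\mathfrak m}(A)$, the largest finite-length submodule of $A$, which is nonzero since $\depth A=0$, and set $A'=A/W$, a one-dimensional Cohen--Macaulay ring with the same minimal primes as $A$. A general linear form $\ell$ of degree $e_r$ is a nonzerodivisor on $A'$: for each $P\in\X$ the ideal $I_P+(x_{1,0},\dots,x_{r-1,0})$ is prime (the $x_{i,0}$ vanish at no point, so they cut the linear space $V(I_P)$ transversally), one-dimensional, and its degree-$e_r$ component is the single line spanned by the form vanishing at the $r$-th coordinate of $P$; the minimal primes of $A$ are among these, so a general degree-$e_r$ form lies in none of them, hence is a parameter and therefore a nonzerodivisor on the Cohen--Macaulay module $A'$. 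Combining $0\to W\to A\to A'\to 0$ and $0\to A'(-e_r)\xrightarrow{\times\ell}A'\to A'/\ell A'\to 0$ with the identity $H_A(\ui)-H_A(\ui-e_r)=\Delta H_\X(\ui)$ gives, for all $\ui\in\N^r$,
\[
\Delta H_\X(\ui)=h(\ui)+g(\ui)-g(\ui-e_r),
\]
where $g:=H_W$ has finite, nonempty support and $h:=H_{A'/\ell A'}$; since $\bar R/(\ell)$ is a polynomial ring with one variable in each of the degrees $e_1,\dots,e_r$ and $A'/\ell A'$ is a quotient of it, $h$ is $\{0,1\}$-valued with $\supp h$ an order ideal of $\N^r$.

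Now assume for contradiction that $\Delta H_\X$ is the Hilbert function of an $\N^r$-graded artinian quotient of $k[x_{1,1},\dots,x_{r,1}]$, hence $\{0,1\}$-valued. Pick $\ui^{\ast}=(p,u_r)\in\N^{r-1}\times\N$ in $\supp g$ with $u_r$ largest; then $g(\ui^{\ast}+e_r)=0$, and the displayed identity at $\ui^{\ast}+e_r$ forces $h(\ui^{\ast}+e_r)-g(\ui^{\ast})\le 1-1=0$, so both equal $1$ and $(p,u_r+1)\in\supp h$. Letting $c_0$ be minimal with $g(p,c_0)\ne 0$, we have $c_0\le u_r$ and $g(p,c_0-1)=0$, and since $\supp h$ is an order ideal containing $(p,u_r+1)$ it also contains $(p,c_0)$; the displayed identity at $(p,c_0)$ then gives $\Delta H_\X(p,c_0)=h(p,c_0)+g(p,c_0)=1+g(p,c_0)\ge 2$, contradicting that $\Delta H_\X$ is $\{0,1\}$-valued. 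I expect the delicate part to be the commutative-algebra bookkeeping that produces a degree-$e_r$ nonzerodivisor on $A'$ — locating the minimal primes of $A$ and seeing that each carries only a one-dimensional space of $e_r$-forms; granting the decomposition $\Delta H_\X=h+g-g(\cdot-e_r)$, the contradiction is a short combinatorial argument on the order ideal $\supp h$.
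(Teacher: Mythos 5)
Your argument is correct, and it takes a genuinely different route from the paper's. Both proofs start the same way: reduce by the regular sequence $x_{1,0},\ldots,x_{r-1,0}$ to $A=R/(\Ix,x_{1,0},\ldots,x_{r-1,0})$ and record that $\Delta H_{\X}(\ui)=H_A(\ui)-H_A(\ui-e_r)$. From there the paper assumes all $e_r$-increments of $H_A$ are nonnegative and, via a delicate analysis of the leading terms of minimal generators of $\Ix$ (Lemma \ref{lemmaB}), computes $H_{R/(\Ix,x_{1,0},\ldots,x_{r,0})}$ exactly and concludes that $\ker(\times\overline{x}_{r,0})$ vanishes in every degree, contradicting $\depth A=0$; this yields the stronger conclusion that $\Delta H_{\X}$ must have a \emph{negative} entry, which the paper exploits in the remark and example that follow. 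You instead split $A$ along $0\to H^0_{\mathfrak m}(A)\to A\to A'\to 0$, cut the one-dimensional Cohen--Macaulay quotient $A'$ by a general degree-$e_r$ parameter, and obtain $\Delta H_{\X}=h+g-g(\cdot-e_r)$ with $h$ a $\{0,1\}$-valued function supported on an order ideal and $g$ finitely and nontrivially supported; the clash with $\Delta H_{\X}$ being $\{0,1\}$-valued is then a short combinatorial check. The commutative-algebra steps you flag as delicate do go through: the minimal primes of $A$ are among the primes $I_P+(x_{1,0},\ldots,x_{r-1,0})$, each of which meets $R_{e_r}\cong k^2$ in a line, so a general $\ell$ of degree $e_r$ is a parameter on the unmixed module $A'$; and every homogeneous ideal of $k[x_{1,1},\ldots,x_{r-1,1},z]$ is monomial because each multigraded piece is one-dimensional, so $\operatorname{Supp}(h)$ is indeed an order ideal. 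The trade-off is that your route entirely avoids the paper's technical Lemma \ref{lemmaB} but proves only the disjunction that $\Delta H_{\X}$ is either negative somewhere or at least $2$ somewhere, rather than pinning down a negative entry; that weaker conclusion still suffices for the theorem as stated.
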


To prove this statement, we will need the following two technical
lemmas.

\begin{lemma}\label{lemmaA}
Let $S = k[x_{1,1},\ldots,x_{r-1,1},x_{r,0},x_{r,1}]$ be an
$\N^r$-graded ring with $\deg x_{i,1} = e_i$ for
$i=1,\ldots,r-1$, and $\deg x_{r,0} = \deg x_{r,1} = e_r$.  Let
$J \subseteq S$ be any $\N^r$-graded ideal of $S$. If
$H_{S/J}(\ui) \leq H_{S/J}(\ui+e_r)$, then $H_{S/J}(\ui+e_r) =
H_{S/J}(\ui)$ or $H_{S/J}(\ui) +1$.
\end{lemma}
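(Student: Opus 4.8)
The plan is to reduce the study of $H_{S/J}$ in the $e_r$-direction to the behavior of a graded algebra in a single variable. Fix $\ui \in \N^r$ and restrict attention to the ``column'' of values $H_{S/J}(\ui + t e_r)$ for $t \geq 0$. Write $\ui = (i_1,\ldots,i_r)$ and let $a = i_r$. The key observation is that, because only $x_{r,0}$ and $x_{r,1}$ carry the grading $e_r$, and the variables $x_{1,1},\ldots,x_{r-1,1}$ each contribute a fixed degree in the other coordinates, the multigraded piece $S_{\ui}$ is spanned by monomials $x_{1,1}^{i_1}\cdots x_{r-1,1}^{i_{r-1}} \cdot m$ where $m$ ranges over the monomials of degree $a$ in $k[x_{r,0},x_{r,1}]$; in particular $\dim_k S_{\ui} = a+1$ whenever all $i_1,\ldots,i_{r-1} \geq 0$. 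Thus $S/J$ in this column looks like a quotient of a rank-one free module over $k[x_{r,0},x_{r,1}]$: more precisely, set $T = k[x_{r,0},x_{r,1}]$ and consider the graded $T$-module $M_{\ui} := \bigoplus_{t \geq 0} (S/J)_{\ui + t e_r}$. This is a finitely generated graded $T$-module generated in degree $0$ by a single element (the image of $x_{1,1}^{i_1}\cdots x_{r-1,1}^{i_{r-1}}$), hence is a cyclic $T$-module, i.e. $M_{\ui} \cong T/K$ for some homogeneous ideal $K \subseteq T$ (after shifting so that degree-$0$ part has the generator).

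Now the whole statement becomes a statement about Hilbert functions of graded quotients of a polynomial ring in \emph{two} variables over a field: if $T = k[u,v]$ with $\deg u = \deg v = 1$ and $K \subseteq T$ is homogeneous, then the Hilbert function of $T/K$ is nondecreasing until it stabilizes or drops to behavior controlled by $\operatorname{reg}$, and more to the point, once $H_{T/K}(d) \leq H_{T/K}(d+1)$, the next value can increase by at most $1$. This last fact is the classical Macaulay-type constraint in two variables: since $\dim_k T_d = d+1$, the Hilbert function of any graded quotient of $k[u,v]$ satisfies $H_{T/K}(d+1) \leq H_{T/K}(d) + 1$ \emph{always} (the multiplication-by-$u$ and multiplication-by-$v$ maps, or equivalently the fact that the generic linear form gives $H(d+1) \le H(d) + (\text{number of new generators}) + 1$, and combined with the decreasing-$O(1)$-growth of Macaulay's bound for embedding dimension $2$). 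Concretely, Macaulay's theorem says that if $H_{T/K}(d) = c$ then $H_{T/K}(d+1) \le c^{\langle d \rangle}$, and for $T$ with two variables $c^{\langle d\rangle} \le c+1$ whenever $c \le d+1$, which is automatic here. So in all cases $H_{T/K}(d+1) - H_{T/K}(d) \le 1$, which after translating back via $M_{\ui} \cong T/K$ gives exactly $H_{S/J}(\ui + e_r) \le H_{S/J}(\ui) + 1$; combined with the hypothesis $H_{S/J}(\ui) \le H_{S/J}(\ui + e_r)$ we conclude $H_{S/J}(\ui+e_r) \in \{H_{S/J}(\ui), H_{S/J}(\ui)+1\}$.

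The steps, in order, are: (1) set up the $T = k[x_{r,0},x_{r,1}]$-module structure on $M_{\ui} = \bigoplus_{t\ge 0}(S/J)_{\ui + te_r}$ and check it is a cyclic graded $T$-module, so $M_{\ui}\cong (T/K)$ for a homogeneous ideal $K$ — this uses the shape of the grading, namely that only two variables have degree $e_r$ and the remaining variables have fixed degree in the complementary coordinates; (2) recall/cite Macaulay's bound for graded quotients of a polynomial ring in two variables and deduce the crude inequality $H_{T/K}(d+1) \le H_{T/K}(d)+1$ for all $d$ (or prove it directly via the two surjections/injections coming from multiplication by a nonzerodivisor and passing to the artinian reduction); (3) translate the degree-$d$ and degree-$(d+1)$ values of $H_{T/K}$ back to $H_{S/J}(\ui)$ and $H_{S/J}(\ui+e_r)$ and combine with the hypothesis.

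I expect the main obstacle to be step (1): making precise and correct the claim that the module $M_{\ui}$ is genuinely \emph{cyclic} over $T$, generated by the single monomial-image in degree $0$. One must verify that every element of $(S/J)_{\ui + te_r}$ is a $T$-multiple of that generator, which relies on the fact that $S_{\ui + te_r}$ is spanned by $x_{1,1}^{i_1}\cdots x_{r-1,1}^{i_{r-1}}$ times a degree-$t$ monomial in $x_{r,0},x_{r,1}$ — i.e. the ``column'' of $S$ above $\ui$ really is a free rank-one $T$-module on that generator. That is immediate from the monomial basis of the polynomial ring $S$ once one notes there is no way to trade an $x_{r,\ast}$ for any other variable without changing the multidegree, but it should be stated carefully. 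Everything after that is either the cited Macaulay bound or elementary bookkeeping.
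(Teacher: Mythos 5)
Your overall strategy is workable and the inequality you are really after, namely $H_{S/J}(\ui+e_r)\le H_{S/J}(\ui)+1$ unconditionally, is correct; but step (1) as written contains a concrete error. The module $M_{\ui}=\bigoplus_{t\ge 0}(S/J)_{\ui+te_r}$ is \emph{not} cyclic over $T=k[x_{r,0},x_{r,1}]$ when $i_r\ge 1$: its bottom graded piece $(S/J)_{\ui}$ can have dimension as large as $i_r+1$ (e.g.\ when $J_{\ui}=0$), whereas a cyclic graded $T$-module generated in its bottom degree has bottom piece of dimension at most one; moreover the element you name as generator, $x_{1,1}^{i_1}\cdots x_{r-1,1}^{i_{r-1}}$, has multidegree $(i_1,\ldots,i_{r-1},0)$ and does not even lie in $M_{\ui}$ unless $i_r=0$. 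The repair is immediate: take the column starting at $r$-th coordinate $0$, that is, $M=\bigoplus_{t\ge 0}(S/J)_{(i_1,\ldots,i_{r-1},t)}$. Then $\bigoplus_{t\ge 0}S_{(i_1,\ldots,i_{r-1},t)}=x_{1,1}^{i_1}\cdots x_{r-1,1}^{i_{r-1}}\cdot T$ is free of rank one over $T$, the corresponding graded piece of $J$ is a $T$-submodule of it, and so $M\cong T/K$ with $H_{T/K}(d)=H_{S/J}(i_1,\ldots,i_{r-1},d)$; your bound applied at $d=i_r$ then gives exactly the desired conclusion. With that correction the argument is complete.

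That said, the detour through $T/K$ and Macaulay's theorem is heavier than needed, and the paper's proof is precisely the direct computation you mention only parenthetically: since $x_{r,0}$ and $x_{r,1}$ are the only variables of degree $e_r$, one has $\dim_k S_{\ui}=i_r+1$, hence $\dim_k S_{\ui+e_r}=\dim_k S_{\ui}+1$; and since $S$ is a domain and $x_{r,0}J\subseteq J$, multiplication by $x_{r,0}$ injects $J_{\ui}$ into $J_{\ui+e_r}$, so $\dim_k J_{\ui+e_r}\ge\dim_k J_{\ui}$. Subtracting gives $H_{S/J}(\ui+e_r)\le H_{S/J}(\ui)+1$ in one line, and the hypothesis supplies the lower bound. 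Your module-theoretic packaging ultimately rests on exactly these two facts, so it is cleaner to state them directly.
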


\begin{proof}
We are given that $\dim_k S_{\ui+e_r} - \dim_k (J)_{\ui+e_r}
 \geq  \dim_k S_{\ui} - \dim_k (J)_{\ui}$.
Now, for any $\ui = (i_1,\ldots,i_r) \in \N^r$, $\dim_k S_{\ui} =
i_r+1$. Thus $i_r+2 - \dim_k (J)_{\ui+e_r} \geq i_r+1 -  \dim_k
(J)_{\ui}$. But because $\dim_k (J)_{\ui+e_r} \geq \dim_k
(J)_{\ui}$ for all $\ui$, we must have
\[\dim_k (J)_{\ui} \geq \dim_k (J)_{\ui+e_r}-1 \geq \dim_k (J)_{\ui}-1.\]
So $\dim_k (J)_{\ui+e_r} = \dim_k (J)_{\ui}$ or $\dim_k
(J)_{\ui}+1$, and thus the conclusion follows.
\end{proof}

\begin{lemma}\label{lemmaB}
Let $\X \subseteq \pr^1 \times \cdots \times \pr^1$ be any finite
set of points such that $\depth(R/\Ix) = r-1$.
Suppose $x_{1,0},\ldots,x_{r-1,0}$ is a regular sequence on $R/\Ix$, and suppose that
$\overline{x}_{1,0},\ldots,\overline{x}_{r,0}$ are nonzero divisors on $R/\Ix$. If
\[H_{R/(\Ix,x_{1,0},\ldots,x_{r-1,0})}(\ui) \leq H_{R/(\Ix,x_{1,0},\ldots,
x_{r-1,0})}(\ui+e_r) ~~\mbox{for all $\ui \in \N^r$,}\] then
\[
H_{R/(\Ix,x_{1,0},\ldots,x_{r,0})}(\ui) = \left\{
\begin{array}{ll}
1 & \mbox{if}~~ (I_{\X})_{\ui} = (0)\\
0 & \mbox{if}~~ (I_{\X})_{\ui} \neq (0).
\end{array}
\right.
\]
\end{lemma}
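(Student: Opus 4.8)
The plan is to work with the two rings $A := R/(\Ix, x_{1,0},\ldots,x_{r-1,0})$ and $B := R/(\Ix, x_{1,0},\ldots,x_{r,0})$, so that $B = A/x_{r,0}A$, and to analyze $B$ one ``$e_r$-column'' at a time. One half is immediate: $B$ is a quotient of $R/(x_{1,0},\ldots,x_{r,0}) \cong k[x_{1,1},\ldots,x_{r,1}]$, whose Hilbert function is $1$ in every degree $\ui\succeq\underline 0$; hence $H_B(\ui)\in\{0,1\}$, and $H_B(\ui) = 1$ precisely when the unique monomial $x_{1,1}^{i_1}\cdots x_{r,1}^{i_r}$ of degree $\ui$ fails to lie in $(\Ix, x_{1,0},\ldots,x_{r,0})$. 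I would also note that $B$ is artinian: since $\depth R/\Ix = r-1$, the ring $A$ has dimension $1$; moreover $x_{1,0},\ldots,x_{r-1,0}$ is a regular sequence on $R/\Ix$, and because $x_{r,0}$ vanishes at no point of $\X$ (this is what ``$\overline{x}_{r,0}$ is a nonzerodivisor'' says), every minimal prime of $A$ has the form $(x_{1,0},x_{1,1},\ldots,x_{r-1,0},x_{r-1,1},L)$ with $L$ the degree-$e_r$ linear form cutting out the $r$-th coordinate of a point of $\X$, and none of these contains $x_{r,0}$; so $\dim B = \dim A - 1 = 0$.

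Now fix $\underline a = (a_1,\ldots,a_{r-1})\in\N^{r-1}$ and set $A^{(\underline a)} := \bigoplus_{t\ge 0}A_{(\underline a, t)}$. Writing $A\cong S/J$ with $S$ the ring of Lemma~\ref{lemmaA} and $J$ the image of $\Ix$, the graded $k[x_{r,0},x_{r,1}]$-module $S^{(\underline a)}$ is free of rank one (generated by $x_{1,1}^{a_1}\cdots x_{r-1,1}^{a_{r-1}}$), so $A^{(\underline a)}\cong k[x_{r,0},x_{r,1}]/\mathfrak c_{\underline a}$ for a homogeneous ideal $\mathfrak c_{\underline a}$, whose initial degree $\sigma_{\underline a}$ is the least $t$ with $J_{(\underline a, t)}\ne 0$. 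The hypothesis says $H_A(\underline a, t) = H_{k[x_{r,0},x_{r,1}]/\mathfrak c_{\underline a}}(t)$ is nondecreasing in $t$, and this forces $\mathfrak c_{\underline a}$ to be $0$ or principal: if $\mathfrak c_{\underline a}\ne 0$, then $H_A(\underline a,\sigma_{\underline a})\ge H_A(\underline a,\sigma_{\underline a}-1) = \sigma_{\underline a}$ while $\dim_k(\mathfrak c_{\underline a})_{\sigma_{\underline a}}\ge 1$ forces $H_A(\underline a,\sigma_{\underline a}) = \sigma_{\underline a}$ and $(\mathfrak c_{\underline a})_{\sigma_{\underline a}} = \langle g_{\underline a}\rangle$; an induction on $t$ using $(\mathfrak c_{\underline a})_{t+1}\supseteq \langle x_{r,0},x_{r,1}\rangle(\mathfrak c_{\underline a})_t$ together with the same inequality then gives $(\mathfrak c_{\underline a})_t = g_{\underline a}\cdot k[x_{r,0},x_{r,1}]_{t-\sigma_{\underline a}}$ for all $t\ge\sigma_{\underline a}$, i.e.\ $\mathfrak c_{\underline a} = (g_{\underline a})$. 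Reducing modulo $x_{r,0}$ gives $B^{(\underline a)}\cong k[x_{r,1}]/(g_{\underline a}(0,x_{r,1}))$; since $B$ is artinian this rules out $\mathfrak c_{\underline a} = 0$ and rules out $x_{r,0}\mid g_{\underline a}$, so $g_{\underline a}(0,x_{r,1})$ is a nonzero scalar times $x_{r,1}^{\sigma_{\underline a}}$ and therefore $H_B(\underline a, t) = 1$ for $t<\sigma_{\underline a}$ and $H_B(\underline a, t) = 0$ for $t\ge\sigma_{\underline a}$.

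The main obstacle is the final step: showing that $\sigma_{\underline a}$ equals the least $t$ with $(\Ix)_{(\underline a, t)}\ne 0$, the subtlety being that a priori $(\Ix)_{(\underline a, t)}$ could be nonzero yet contained in $(x_{1,0},\ldots,x_{r-1,0})$, so that $J_{(\underline a, t)} = 0$. For $t\ge\sigma_{\underline a}$ this cannot happen: lift a nonzero element of $J_{(\underline a,\sigma_{\underline a})}$ to $(\Ix)_{(\underline a,\sigma_{\underline a})}$ and multiply by powers of $x_{r,0}$. For $t<\sigma_{\underline a}$, I would induct on $|\underline a|$; the case $\underline a = \underline 0$ is clear since a form of degree $(\underline 0, t)$ is a binary form in $x_{r,0},x_{r,1}$ that maps injectively into $J$. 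For the inductive step, a nonzero $F\in(\Ix)_{(\underline a, t)}$ with $t<\sigma_{\underline a}$ maps to $0$ in $J$, so $F\in\Ix\cap(x_{1,0},\ldots,x_{r-1,0}) = \Ix\cdot(x_{1,0},\ldots,x_{r-1,0})$ (the equality holding because $x_{1,0},\ldots,x_{r-1,0}$ is a regular sequence on $R/\Ix$), whence $F = \sum x_{i,0}G_i$ with $G_i\in(\Ix)_{(\underline a - e_i, t)}$ over those $i<r$ with $a_i\ge 1$; but multiplying a $J$-nonzero form by $x_{i,1}$ shows $\sigma_{\underline a - e_i}\ge\sigma_{\underline a}$, so $t<\sigma_{\underline a - e_i}$ and the inductive hypothesis forces every $G_i = 0$, contradicting $F\ne 0$. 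Combining the two preceding paragraphs, $H_B(\ui) = 1$ exactly when $(\Ix)_{\ui} = 0$ and $H_B(\ui) = 0$ otherwise, which is the claim.
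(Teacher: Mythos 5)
Your proof is correct, but it takes a genuinely different route from the paper's. The paper works directly with a minimal generator $F$ of $\Ix$ in a degree $\ui$ that is minimal with $(\Ix)_{\ui}\neq (0)$, and shows by contradiction that $F$ must involve the monomial $x_{1,1}^{i_1}\cdots x_{r,1}^{i_r}$ with nonzero coefficient: it first shows the ``last block'' $F'$ of $F$ is nonzero, then builds an explicit monomial basis $\mathcal{B}_b$ of $(I_\X,x_{1,0},\ldots,x_{r-1,0})_{\ui+be_r}$ and plays it off against the product of linear forms $L_{R_{r,1}}\cdots L_{R_{r,t_r}}\in\Ix$; the degrees $\ui\succeq t_ie_i$ are treated separately. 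You instead slice $A=R/(\Ix,x_{1,0},\ldots,x_{r-1,0})$ into rank-one columns over $k[x_{r,0},x_{r,1}]$, upgrade the paper's Lemma~\ref{lemmaA} to the statement that each column ideal $\mathfrak{c}_{\underline{a}}$ is principal, and use artinianness of $B$ (via a dimension count on the minimal primes of $A$, which relies on the nonzerodivisor hypotheses) to pin down each column of $B$ as $k[x_{r,1}]/(x_{r,1}^{\sigma_{\underline{a}}})$; the remaining comparison between $(\Ix)_{\ui}=0$ and $J_{\ui}=0$ is handled by induction on $|\underline{a}|$ using $\Ix\cap(x_{1,0},\ldots,x_{r-1,0})=\Ix\cdot(x_{1,0},\ldots,x_{r-1,0})$, which is where the regular sequence (via vanishing of Koszul homology) enters. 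Note that this last step is really the same mechanism as the paper's ``$F'\neq 0$'' argument, which unrolls the same Koszul fact by hand. What your approach buys is the elimination of the explicit basis bookkeeping and of the separate large-degree case, at the cost of invoking slightly more machinery (principality in two variables, Krull dimension of $B$, Koszul homology); the paper's version is longer but entirely elementary linear algebra. Both arguments are complete and use all the hypotheses in essentially the same places.
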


\begin{proof}  To simplify our notation, set
$J = (\Ix,x_{1,0},\ldots,x_{r,0})$. Consider any $\ui \in \N^r$
such that $(\Ix)_{\ui} = (0)$. Then $(J)_{\ui} =
(x_{1,0},\ldots,x_{r,0})_{\ui}$.  Hence $(R/J)_{\ui} =
(R/(x_{1,0},\ldots,x_{r,0}))_{\ui} =
(k[x_{1,1},\ldots,x_{r,1}])_{\ui},$ and thus $H_{R/J}(\ui) =
\dim_k (k[x_{1,1},\ldots,x_{r,1}])_{\ui} = 1$.

Let $\pi_i(\X) = \{R_{i,1},\ldots,R_{i,t_i}\}$ be the set of the
distinct $i$th coordinates of the points that appear in $\X$. If
$L_{R_{i,j}}$ is the form of degree $e_i$ that passes through the
point $R_{i,j}$, then $L_{R_{i,1}}\cdots L_{R_{i,t_i}}$ is a
minimal generator of $\Ix$ of degree $t_ie_i$. Now
$H_{\X}((t_i-1)e_i) = H_{\X}(t_ie_i) = |\pi_i(\X)| = t_i$ (for example,
see \cite[Proposition 4.6]{VT1}).
Because $\overline{x}_{i,0}$ is a nonzero divisor, the short exact sequence
\[0 \rightarrow R/\Ix(-e_i)
\stackrel{\times \overline{x}_{i,0}}{\longrightarrow} R/\Ix
\rightarrow R/(\Ix,x_{i,0}) \rightarrow 0,\] implies that
$H_{R/(\Ix,x_{i,0})}(t_ie_i) = H_{\X}(t_ie_i) - H_{\X}((t-1)e_i)
= 0$. In other words, $R_{t_ie_i} = (\Ix,x_{i,0})_{t_ie_i}$.  So,
for any $\ui \succeq t_ie_i$, $R_{\ui} = (\Ix,x_{i,0})_{\ui}
\subseteq (J)_{\ui}$, and hence $H_{R/J}(\ui) =0$. So,
$H_{R/J}(\ui) = 0$ for all $\ui \succeq t_ie_i$ and each $i =
1,\ldots,r$.

Now consider any $\ui \in \N^r$ such that $\ui \preceq
(t_1-1,\ldots,t_r-1)$ and $(\Ix)_{\ui} \neq (0)$. Assume that
$\ui$ is minimal, i.e., $(\Ix)_{\ui} \neq (0)$, but
$(\Ix)_{\ui-e_j} = (0)$ for $j=1,\ldots,r$. There is then a
minimal generator $F \in \Ix$ of degree $\ui$ which we can write
as
\begin{equation}\label{cform}
F = cx_{1,1}^{i_1}x_{2,1}^{i_2}\cdots x_{r,1}^{i_r} +
\sum_{\mbox{$m$ a monomial of degree $\ui$ in
$(x_{1,0},\ldots,x_{r,0})$}}  c_{m}m.
\end{equation}
If $c \neq 0$, then because $x_{1,0},\ldots,x_{r,0}$ and $F$ are
in $(J)_{\ui}$, we then have $R_{\ui} = (J)_{\ui}$,  from which
it follows that $H_{R/J}(\uj) = 0$ for all $\uj \succeq \ui$.

It thus remains to show that we can find a minimal generator $F$
of degree $\ui$ with form $(\ref{cform})$ and $c \neq 0$. Suppose
not, that is, $c = 0$.  Then $F$ has the form
\begin{eqnarray*}\label{mainF}
F &=& x_{1,0}H_1(x_{1,0},\ldots,x_{r,1}) +
x_{1,1}^{i_1}x_{2,0}H_2(x_{2,0},
\ldots,x_{r,1})+  x_{1,1}^{i_1}x_{2,1}^{i_2}x_{3,0}H_3(x_{3,0},\ldots,x_{r,1})\\
&& + \cdots + x_{1,1}^{i_1}x_{2,1}^{i_2}\cdots
x_{r-1,1}^{i_{r-1}}x_{r,0}H_r(x_{r,0}, x_{r,1}).
\end{eqnarray*}

Set $F' = x_{1,1}^{i_1}x_{2,1}^{i_2}\cdots
x_{r-1,1}^{i_{r-1}}x_{r,0}H_r(x_{r,0}, x_{r,1})$.  We first show
that $F' \neq 0$.  If $F' = 0$, then $F \in
(x_{1,0},\ldots,x_{r-1,0})_{\ui}$, i.e.,
\begin{equation}\label{formF}
F = x_{1,0}G_1 + x_{2,0}G_2 + \cdots + x_{r-1,0}G_{r-1}.
\end{equation}
But then $x_{r-1,0}G_{r-1} \in (\Ix,x_{1,0},\ldots,x_{r-2,0})$,
and since $x_{r-1,0}$ is regular on
$R/(\Ix,x_{1,0},\ldots,x_{r-2,0})$, we have $G_{r-1} \in
(\Ix,x_{1,0},\ldots,x_{r-2,0})_{\ui-e_{r-1}}$.  Because
$(\Ix)_{\uj} = (0)$ for all $\uj \prec \ui$ we must have that
$G_{r-1} \in (x_{1,0},\ldots,x_{r-2,0}).$ So $G_{r-1} =
x_{1,0}G'_1 + \cdots + x_{r-2,0}G'_{r-2}$, and subbing back into
$(\ref{formF})$ we get
\[F = x_{1,0}G_1 + \cdots + x_{r-1,0}( x_{1,0}G'_1 + \cdots
+ x_{r-2,0}G'_{r-2}) = x_{1,0}G''_1 + x_{2,0}G''_2 + \cdots +
x_{r-2,0}G''_{r-2}\] where $G''_i = G_i + x_{r-1,0}G'_i$.
Similarly, we can show that $G''_{r-2} \in
(x_{1,0},\ldots,x_{r-3,0})$, and thus $F = x_{1,0}E_1+\cdots
+x_{r-3,0}E_{r-3}$ for some appropriate forms $E_i$.  We can
continue this process to eventually show that $F$ is divisible by
$x_{1,0}$, that is, $F = x_{1,0}F_1$.  But since $x_{1,0}$ is a
regular on $R/\Ix$, this implies that $F_1 \in (\Ix)$,
contradicting the minimality of the degree of $F$.  So $F' \neq
0$.

Set $J' = (I_{\X},x_{1,0},\ldots,x_{r-1,0})$. We now claim that
$H_{R/J'}(\ui) = i_r$.  Now $H_{R/J'}(\ui-e_r) =
H_{R/(x_{1,0},\ldots,x_{r-1,0})}(\ui-e_r) = i_r$. Our hypotheses
then imply that $i_r = H_{R/J'}(\ui-e_r) \leq H_{R/J'}(\ui)$. But
because
\[R/J' \cong
\frac{R/(x_{1,0},\ldots,x_{r-1,0})}{J'/(x_{1,0},\ldots,x_{r-1,0})}
\cong k[x_{1,1},\ldots,x_{r-1,0},x_{r,0},x_{r,1}]/L ~~\mbox{for some ideal $L$},\] Lemma
\ref{lemmaA} implies $H_{R/J'}(\ui) = H_{R/J'}(\ui-e_r)$ or
$H_{R/J'}(\ui-e_r)+1$.
If $H_{R/J'}(\ui) = i_r+1$, then
\begin{eqnarray*}
\dim_k (J')_{\ui} &=& (i_1+1)\cdots (i_r+1) - (i_r+1) = \dim_k
(x_{1,0},\ldots,x_{r-1,0})_{\ui}.
\end{eqnarray*}
This means that $(J')_{\ui} = (x_{1,0},\ldots,x_{r-1,0})_{\ui}$,
and hence $F \in  (x_{1,0},\ldots,x_{r-1,0})_{\ui}$.  But as
shown above, $F' \neq 0$, so $F \not\in
(x_{1,0},\ldots,x_{r-1,0})_{\ui}$. Thus  $H_{R/J'}(\ui) =i_r$,
and $\dim_k (J')_{\ui} = (i_1+1)\cdots (i_r+1) - i_r$.

Let $M_{\ui}$ denote the set of all monomials of degree $\ui$ in
$R$. A basis for $(J')_{\ui}$ is then
\[\{F'\} \cup  (M_{\ui} \setminus
\{x_{1,1}^{i_1}x_{2,1}^{i_2}\cdots x_{r-1,1}^{i_{r-1}}m ~|~ m =
x_{r,0}^{i_r-a}x_{r,1}^{a} ~\mbox{for $a = 0,\ldots,i_r$}\}).\]
To see that this is a basis, note that the elements are linearly
independent and we have $\dim_k (J')_{\ui}$ elements. Then, for
any $b \geq 1$, the following set of elements in
$(J')_{\ui+be_r}$ is linearly independent:
\begin{eqnarray*}
\mathcal{B}_b &=& \{x_{r,0}^bF',x_{r,0}^{b-1}x_{r,1}F',\ldots,
x_{r,1}^{b}F'\}\\
&& \cup ~(M_{\ui+be_r} \setminus
\{x_{1,1}^{i_1}x_{2,1}^{i_2}\cdots x_{r-1,1}^{i_{r-1}}m ~|~ m =
x_{r,0}^{i_r+b-a}x_{r,1}^{a} ~\mbox{for $a = 0,\ldots,i_r+b$}\}).
\end{eqnarray*}
Note that
\begin{eqnarray*}
|\mathcal{B}_b| &=& (i_1+1)\cdots (i_{r-1}+1)(i_r+b+1) - i_r.
\end{eqnarray*}
Now, because $i_r = H_{R/J'}(\ui) \leq H_{R/J'}(\ui+be_j)$ we have
\[\dim_k R_{\ui+be_j} - \dim_k (J')_{\ui+be_j} \geq i_r.\]
Hence
\[|\mathcal{B}_b| =
(i_1+1)\cdots (i_{r-1}+1)(i_r+b+1) -i_r = \dim_k R_{\ui+be_j} -
i_r \geq \dim_k (J')_{\ui+be_j} \geq |\mathcal{B}_b|.\] It
follows that $\dim_k (J')_{\ui+be_j} = |\mathcal{B}_b|$, and hence
the elements of $\mathcal{B}_b$ form a basis for $(J')_{\ui+be_j}$.

We now pick $p$ so that $i_r+p = t_r = |\pi_r(\X)|$. (We have $p
\geq 1$ since $\ui \preceq (t_1-1,\ldots,t_r-1)$, i.e., $i_r <
t_r$.) As noted, $L_{R_{r,1}}\cdots L_{R_{r,t_r}} \in
(\Ix)_{t_re_r}$, and furthermore, each $L_{R_{r,i}}$ has form
$b_{i,0}x_{r,0} + b_{i,1}x_{r,1}$ with $b_{i,1} \neq 0$ for all
$i$ because $x_{r,0}$ is not a zero-divisor, i.e.,
no point $R_{r,i}$ has form $[0:1]$.  Now
\[x_{1,1}^{i_1}x_{2,1}^{i_2}\cdots x_{r-1,1}^{i_{r-1}}
L_{R_{r,1}}\cdots L_{R_{r,t_r}} \in (\Ix)_{\ui+pe_r} \subseteq
(J')_{\ui+pe_r},\] so $x_{1,1}^{i_1}x_{2,1}^{i_2}\cdots
x_{r-1,1}^{i_{r-1}} L_{R_{r,1}}\cdots L_{R_{r,t_r}} $ can be
written as a linear combination of the elements of
$\mathcal{B}_p$.  But this cannot happen because
$x_{1,1}^{i_1}x_{2,1}^{i_2}\cdots x_{r-1,1}^{i_{r-1}}
L_{R_{r,1}}\cdots L_{R_{r,t_r}} $ contains the term
$x_{1,1}^{i_1}\cdots x_{r-1,1}^{i_{r-1}}x_{r,1}^{i_r+p}$, but
this term does not appear in any of our basis elements.  So, if
$F$ is a minimal generator of degree $\ui$, it must have the form
$(\ref{cform})$ with $c \neq 0$.
\end{proof}

\begin{example}
The hypothesis that
$H_{R/(\Ix,x_{1,0},\ldots,x_{r-1,0})}(\ui) \leq
H_{R/(\Ix,x_{1,0},\ldots,x_{r-1,0})}(\ui+e_r)$
in the above statement is necessary.
For example, in $\popo$ consider the set of
points $\X=\{P_{1,1},P_{2,2},P_{3,3}\}$
where the defining ideal of $I_{P_{i,i}}=(x_1-ix_0,y_1-iy_0)$.
Then $\depth(R/\Ix) = 1$, $x_0$ is a regular sequence on $R/\Ix$,
and $\overline{x}_0$ and $\overline{y}_0$ are nonzero-divisors on $R/\Ix$.  We have
\[H_{\X} =
\begin{bmatrix}
1 & 2 & 3 & 3&\cdots \\
2 & 3 & 3 & 3& \cdots \\
3 & 3 & 3 & 3&\cdots \\
3 & 3 & 3 & 3&\cdots \\
\vdots & \vdots &\vdots & \vdots &\ddots
\end{bmatrix}
~~\text{and thus}~~
H_{R/(\Ix,x_0)} =
\begin{bmatrix}
1 & 2 & 3 & 3&\cdots \\
1 & 1 & 0 & 0& \cdots \\
1 & 0 & 0 & 0&\cdots \\
0 & 0 & 0 & 0&\cdots \\
\vdots & \vdots &\vdots & \vdots &\ddots
\end{bmatrix}.\]
The function $H_{R/(\Ix,x_0)}$ fails to have
$H_{R/(\Ix,x_0)}(i,j) \leq H_{R/(\Ix,x_0)}(i,j+e_2)$ for all
$(i,j) \in \N^2$.  Now $\dim_k (\Ix)_{1,1} \neq 0$ but
\[
H_{R/(I_{\X},x_0,y_0)} =
\begin{bmatrix}
1 & 1 & 1 & 0 & \cdots \\
1 & 1 & 0 & 0 & \cdots \\
1 & 0 & 0 & 0 & \cdots \\
0 & 0 & 0 & 0 & \cdots \\
\vdots&\vdots&\vdots&\vdots&\ddots
\end{bmatrix},\]
that is, $H_{R/(I_{\X},x_0,y_0)} =1$.
\end{example}

\begin{proof}(of Theorem \ref{depth=r-1})
Let $R =
k[x_{1,0},x_{1,1},x_{2,0},x_{2,1},\ldots,x_{r,0},x_{r,1}]$ be the
$\N^r$-graded coordinate ring of $\pr^1 \times \cdots \times
\pr^1$. Since $\depth(R/\Ix) =r-1$, by Theorem
\ref{regularsequence} we can find a regular sequence
$L_1,\ldots,L_{r-1}$ on $R/\Ix$ with $\deg L_i = e_i$. Moreover,
by Remark \ref{regularsequenceremark}, we can assume $L_i =
x_{i,0}$ for $i =1,\ldots,r-1$.  Note that we can also assume
that $x_{r,0}$ is a nonzero divisor of $R/\Ix$.

Set $J_0 = \Ix$, and for $i=1,\ldots,r-1$, set $J_i =
(J_{i-1},x_{i,0})$. Then, for each $i = 1,\ldots,r-1$ we have a
short exact sequence
\[0 \rightarrow R/J_{i-1}(-e_i) \stackrel{\times \overline{x}_{i,0}}{\longrightarrow}
R/J_{i-1} \rightarrow R/J_i \rightarrow 0.\] We thus have
\[H_{R/J_{r-1}}(\ui) = H_{R/J_{r-2}}(\ui) - H_{R/J_{r-2}}(\ui-e_{r-1}) ~~\mbox{for all
$\ui \in \N^r$}\] with $H_{R/J_{r-2}}(\ui) = 0$ if $\ui
\not\succeq \underline{0}$.

For all $\ui \in \N^r$, we have the following exact sequence of
vector spaces:
\begin{equation}    \label{exactsequence}
0 \rightarrow (\ker \times \overline{x}_{r,0})_{\ui} \rightarrow
(R/J_{r-1})_{\ui} \stackrel{\times
\overline{x}_{r,0}}{\rightarrow} (R/J_{r-1})_{\ui+e_r}
\rightarrow (R/(J_r,x_{r,0}))_{\ui+e_r} \rightarrow 0
\end{equation}
where $\overline{x}_{r,0} \neq 0$ is a degree $e_r$ element in
$R/J_{r-1} = R/(\Ix,x_{1,0},\ldots,x_{r-1,0})$.  Because
$\depth(R/\Ix) = r-1$, then there is at least one $\ui\in \N^r$
such that the map
\[
(R/J_{r-1})_{\ui} \stackrel{\times
\overline{x}_{r,0}}{\rightarrow} (R/J_{r-1})_{\ui+e_r}
\]
has a non-zero kernel.  This follows from the fact that
$\overline{x}_{r,0}$ must be a zero divisor of $R/J_{r-1}$, so
there exists a non-zero element $\overline{F} \in
(R/J_{r-1})_{\ui}$ such that $Fx_{r,0} \in J_{r-1}$.

It will now suffice to show that there exists an $\ui \in \N^r$
such that
\[
H_{R/J_{r-1}}(\ui) > H_{R/J_{r-1}}(\ui+e_r).
\]
Indeed, because $H_{R/J_{r-1}}(\ui) =
H_{R/J_{r-2}}(\ui)-H_{R/J_{r-2}}(\ui-e_{r-1})$, the above
inequality would imply
\[H_{R/J_{r-2}}(\ui+e_r) - H_{J_{r-2}}(\ui-e_{r-1}+e_r)-H_{R/J_{r-2}}(\ui) + H_{R/J_{r-2}}(\ui-e_{r-1}) < 0  \Leftrightarrow
\Delta H_{\X}(\ui+e_r) < 0.\] So, not only is $\Delta H_{\X}$
not  the Hilbert function of an artinian quotient, $\Delta
H_{\X}$ is not the Hilbert function of {\it any} quotient because
it has a negative entry.

Suppose, for a contradiction, that $H_{R/J_{r-1}}(\ui) \leq
H_{R/J_{r-1}}(\ui+e_r)$ for all $\ui \in \N^r$.  Because
\begin{eqnarray*}
R/J_{r-1} &\cong &\frac{R/(x_{1,0},\ldots,x_{r-1,0})}
{J_{r-1}/(x_{1,0},\ldots,x_{r-1,0})} \cong
k[x_{1,1},x_{2,1},\ldots,x_{r-1,1},x_{r,0},x_{r,1}]/L
\end{eqnarray*}
with $L \cong J_{r-1}/(x_{1,0},\ldots,x_{r-1,0})$,
$H_{R/J_{r-1}}$ has the Hilbert function of some quotient of
$k[x_{1,1},\ldots,x_{r-1,1},x_{r,0},x_{r,1}]$ with $\deg x_{i,1}
= e_i$ and $\deg x_{r,0} = \deg x_{1,1} = e_r$.  It then follows
by  Lemma \ref{lemmaA} that if  $H_{R/J_{r-1}}(\ui) \leq
H_{R/J_{r-1}}(\ui+e_r)$, then $H_{R/J_{r-1}}(\ui+e_r) =
H_{R/J_{r-1}}(\ui) + 1,$ or $H_{R/J_{r-1}}(\ui+e_r) =
H_{R/J_{r-1}}(\ui).$ We now show that both cases imply $\dim_k
(\ker \times \overline{x}_{r,0})_{\ui} = 0$ for all $\ui \in
\N^r$. \vspace{.15cm}

\noindent {\it Case 1. } If $H_{R/J_{r-1}}(\ui+e_r) =
H_{R/J_{r-1}}(\ui) +1$, then from the exact sequence
(\ref{exactsequence}), we have
\begin{eqnarray*}
\dim_k (\ker \times \overline{x}_{r,0})_{\ui}
& = & H_{R/J_{r-1}}(\ui) - H_{R/J_{r-1}}(\ui+e_r) + H_{R/(J_{r-1},x_{r,0})}(\ui+e_r) \\
&= &-1 + H_{R/(J_{r-1},x_{r,0})}(\ui+e_r).
\end{eqnarray*}
By Lemma \ref{lemmaB}, for any $\ui \in \N^r$ we must have
\[H_{R/(J_{r-1},x_{r,0})}(\ui) =
\dim_k (R/(\Ix,x_{1,0},\ldots,x_{r,0}))_{\ui} = \mbox{$0$ or
$1$}.\] So, if  $H_{R/J_{r-1}}(\ui+e_r) = H_{R/J_{r-1}}(\ui) +1
$, then $\dim_k (\ker \times \overline{x}_{r,0})_{\ui} = 0$
since dimension must be nonnegative.

\noindent {\it Case 2. } If $H_{R/J_{r-1}}(\ui+e_r) =
H_{R/J_{r-1}}(\ui)$, then from (\ref{exactsequence}) we deduce
that
\begin{eqnarray*}
\dim_k (\ker \times \overline{x}_{r,0})_{\ui}
& = & H_{R/J_{r-1}}(\ui) - H_{R/J_{r-1}}(\ui+e_r) + H_{R/(J_{r-1},x_{r,0})}(\ui+e_r) \\
&= &0 + H_{R/(J_{r-1},x_{r,0})}(\ui+e_r).
\end{eqnarray*}
Now $H_{R/J_{r-1}}(\ui) = H_{R/J_{r-1}}(\ui+e_r)$ can occur only
if $H_{\X}(\ui+e_r) < (i_1+1)\cdots (i_r+2)$, that is,
if $(\Ix)_{\ui+e_r} \neq (0)$.  By Lemma \ref{lemmaB} we have
$H_{R/(J_{r-1},x_{r,0})}(\ui+e_r) = 0$, and hence $\dim_k (\ker
\times \overline{x}_{r,0})_{\ui} =0$.

We now see from both Cases 1 and 2, that if $H_{R/J_{r-1}}(\ui)
\leq H_{R/J_{r-1}}(\ui+e_r)$ for all $\ui \in \N^r$, then we must
always have $\dim_k (\ker \times \overline{x}_{r,0})_{\ui} = 0$.
But this contradicts the fact that since $\X$ is not ACM, there
exists some $\ui \in \N^r$ with $\dim_k (\ker \times
\overline{x}_{r,0})_{\ui} > 0$.  So, $H_{R/J_{r-1}}(\ui) >
H_{R/J_{r-1}}(\ui+e_r)$ for some $\ui$, as desired.
\end{proof}

Combining the above result with  Theorem \ref{halfACMclassify}
gives a new proof for Theorem \ref{GMR-main}.

\begin{proof}(Proof of Theorem \ref{GMR-main})  If $\X$ is
a finite set of points in $\popo$, then $\depth(R/\Ix) = 2$ or $1$ by
Theorem \ref{depthdim}.  If $\depth(R/\Ix) =2$, then $\X$ is
ACM, and thus by Theorem \ref{halfACMclassify} we have that
$\Delta H_{\X}$ is the Hilbert function of a bigraded artinian
quotient of $k[x_1,y_1]$.  If $\depth(R/\Ix) = 1$, then $\X$
is not ACM.  If we now apply Theorem \ref{depth=r-1} to
the case $r=2$, we have
that $\Delta H_{\X}$ is not the Hilbert function of a bigraded
artinian quotient.
\end{proof}

\begin{remark} Let $\X$ be a finite set of points in $\popo\times\pr^1$.
Then $\depth(R/\Ix) = 1,2,$ or $3$.  In light of Theorem \ref{depth=r-1}, to
show that the converse
of Theorem \ref{halfACMclassify} holds in $\popo \times \pr^1$, it suffices
to show that if $\depth(R/\Ix) =1$, then $\Delta H_{\X}$ is not
the $\N^3$-graded Hilbert function of an artinian quotient
of $k[x_{1,1},x_{2,1},x_{3,1}]$.
We are currently exploring the case of points in $\pr^1 \times \popo$.
\end{remark}

\begin{example}
In the proof of Theorem \ref{depth=r-1}, to show that
$\Delta H_{\X}$ is not the Hilbert function of an artinian ring, we show
that
$\Delta H_{\X}$ {\it must} have a negative entry.  This approach,
however, will not work for points in $\pr^1 \times \pr^n$ with
$\depth(R/\Ix) =1$ and $n >1$.  For example
let $\X \subseteq \pr^1 \times \pr^2$ where $\X$ is the following
11 points:
\begin{eqnarray*}
\X &=& \{P_1 \times Q_1,P_1\times Q_2,P_1\times Q_3, P_1 \times Q_4,
P_2 \times Q_1,P_2\times Q_2,P_2\times Q_3, P_2 \times Q_5, \\
&&P_3 \times Q_1,P_3\times Q_2,P_3\times Q_3\}
\end{eqnarray*}
where $P_i = [1:i] \in \pr^1$ for $i=1,\ldots,3$,
and $Q_1 = [1:1:1], Q_2 = [1:1:2], Q_3 = [1:1:3], Q_4 = [1:0:1]$
and $Q_5 = [1:0:2]$ in $\pr^2$.
Using {\tt CoCoA} one finds that $R/\Ix$ is not CM.
The Hilbert function of $\X$ is
\[H_{\X} =
\begin{bmatrix}
1 & 3 & 5 & 5 & \cdots \\
2 & 6 & 8 & 8 & \cdots \\
3 & 8 & 11 & 11 & \cdots \\
3 & 8 & 11 & 11 & \cdots \\
\vdots& \vdots & \vdots &\vdots & \ddots
\end{bmatrix}
~~\mbox{and}~~\Delta H_{\X} =
\begin{bmatrix}
1 & 2 & 2 & 0 & \cdots \\
1 & 2 & 0 & 0 & \cdots \\
1 & 1 & 1 & 0 & \cdots \\
0 & 0 & 0 & 0 & \cdots \\
\vdots& \vdots & \vdots &\vdots & \ddots
\end{bmatrix}.
\]
Note that $\Delta H_{\X}$ has no negative values, but $\Delta H_{\X}$
still cannot be the Hilbert function of an artinian quotient.
This is because $\Delta H_{\X}(1,2) = 0$, so we should have
$\Delta H_{\X}(i,j) = 0$ for all $(i,j) \succeq (1,2)$, but
$\Delta H_{\X}(2,2) =1$.
\end{example}


\section{ACM sets of points and their geometry}

In \cite{Gu1} and \cite{VT}, the two authors classified
ACM sets of points in $\popo$ via the geometry of the points.
In this section we revisit this classification; we show
that this geometric criterion extends to
a sufficient condition for ACM sets of points in $\pr^1 \times
\pr^n$.  However, we give an example to show that this criterion fails
to be a necessary condition for a set of ACM points in the
general case.

We begin by adapting the construction and main result of
\cite{Gu2} to reduced points. Let $\X$ be a finite set of
points in $\popo$. Let $\pi_1(\X) = \{P_1,\ldots,P_r\}$,
respectively, $\pi_2(\X) = \{Q_1,\ldots,Q_t\}$ be the set of
first, respectively second, coordinates of the points in $\X$.
For each tuple $(i,j)$ with $1 \leq i \leq r\,, 1 \leq j \leq t$,
set $P_{i,j} = P_i \times Q_j$.  Then, for each such $(i,j)$ define
\[p_{ij}:= \left\{\begin{array}{ll}
1 & \textrm {if } P_{i,j}\in \X \\
0 & \mbox{otherwise.}
\end{array}
\right.\]
The set $\Ssx$ is then defined to be the set of $t$-tuples
\[
\Ssx:=\{(p_{11},\ldots,p_{1t}),(p_{21},\ldots,p_{2t}),\ldots,
(p_{r1},\dots,p_{rt})\}.
\]

With this notation the main result of \cite{Gu2} for distinct
points becomes:

\begin{theorem}[{\cite[Theorem 2.1]{Gu2}}]   \label{acm}
Let $\X$ be a finite set of points in $\popo$.  Then $\X$ is ACM if and only
if the set
$\mathcal S_{\X}$ is a totally ordered set with respect to $\succeq$.
\end{theorem}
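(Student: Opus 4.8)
The plan is to reduce Theorem \ref{acm} to the characterization of ACM sets of points in $\popo$ recalled in the introduction (from \cite{Gu2,VT2}), namely that $\X\subseteq\popo$ is ACM if and only if $\X$ satisfies property $(\star)$. Granting that, it is enough to prove the purely combinatorial equivalence
\[
\Ssx \text{ is totally ordered with respect to } \succeq
\quad\Longleftrightarrow\quad
\X \text{ satisfies property } (\star),
\]
which is a direct unwinding of the definitions.

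To establish this equivalence I would argue by contraposition in both directions. Suppose first that $\Ssx$ is not totally ordered: then two of its $t$-tuples, coming from rows $i$ and $i'$ of the $0/1$ array $(p_{ij})$, are incomparable, so there are columns $j,j'$ with $p_{ij}=1$, $p_{i'j}=0$ and $p_{ij'}=0$, $p_{i'j'}=1$. Reading this back in terms of points, $P_i\times Q_j\in\X$ and $P_{i'}\times Q_{j'}\in\X$, while $P_i\times Q_{j'}\notin\X$ and $P_{i'}\times Q_j\notin\X$; since $i\neq i'$ and $j\neq j'$ force $P_i\neq P_{i'}$ and $Q_j\neq Q_{j'}$, this pair of points violates property $(\star)$. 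Conversely, if property $(\star)$ fails, witnessed by $P_a\times Q_b, P_c\times Q_d\in\X$ with $P_a\neq P_c$, $Q_b\neq Q_d$, $P_a\times Q_d\notin\X$ and $P_c\times Q_b\notin\X$, then with the associated row indices $a,c$ and column indices $b,d$ one has $p_{ab}=1$, $p_{cb}=0$, $p_{ad}=0$, $p_{cd}=1$, so rows $a$ and $c$ are incomparable and $\Ssx$ is not totally ordered. Combining the two directions with the $(\star)$-characterization of ACMness in $\popo$ completes the proof.

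If one prefers an argument that does not invoke the property $(\star)$ classification, the statement can also be proved directly through the Hilbert function. For the implication ``$\Ssx$ totally ordered $\Rightarrow$ $\X$ ACM'', reorder the first coordinates so that the supports of the rows of $\Ssx$ are nested, $\operatorname{supp}(\text{row }1)\supseteq\cdots\supseteq\operatorname{supp}(\text{row }r)$; a direct count of $H_{\X}$ in each bidegree then shows that $\Delta H_{\X}$ is the Hilbert function of a bigraded artinian monomial (``staircase'') quotient of $k[x_1,y_1]$, so $\X$ is ACM by Theorem \ref{GMR-main}. For the converse, if $\Ssx$ is not totally ordered one must show that $\X$ fails to be ACM; this can be done by using a pair of incomparable rows to pinpoint a bidegree at which $\Delta H_{\X}$ takes a negative value (essentially the route of \cite{Gu2}), which contradicts ACMness by Theorem \ref{GMR-main} and is consistent with the fact (Theorem \ref{depthdim}) that a non-ACM set in $\popo$ has $\depth R/\Ix=1$, exactly the situation handled by the methods behind Theorem \ref{depth=r-1}. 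The only real obstacle is bookkeeping: in the short combinatorial route there is essentially nothing to do beyond matching indices carefully, while in the self-contained route the explicit Hilbert function computation for the nested configuration, and especially the location of the negative entry in the non-ACM case, require genuine care; for this reason I would present the first route as the actual proof.
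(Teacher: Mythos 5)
The first thing to note is that the paper does not prove this statement at all: Theorem \ref{acm} is quoted as Theorem 2.1 of \cite{Gu2} (specialized to reduced points), and the burden of proof rests entirely on that reference. So there is no internal proof to match; what the paper does is use Theorem \ref{acm}, together with the observation that ``$\Ssx$ totally ordered'' is equivalent to property $(\star)$, to deduce Theorem \ref{theorem*}. Your primary route runs that deduction in reverse: you assume the $(\star)$-characterization and recover the $\Ssx$ statement from the same combinatorial equivalence. Your verification of the equivalence (incomparable rows of $(p_{ij})$ correspond exactly to a pair of points violating $(\star)$) is correct and is precisely the ``straightforward exercise'' invoked in the proof of Theorem \ref{theorem*}. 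But within this paper's logical architecture your argument is circular: the only $(\star)$-characterization established here is Theorem \ref{theorem*}, which is itself derived from Theorem \ref{acm}. The one direction proved independently in the paper is Theorem \ref{geometriccondition} (with $n=1$), which gives you ``$\Ssx$ totally ordered $\Rightarrow \X$ ACM'' non-circularly; for the converse direction you must explicitly lean on the independent proof in \cite{VT2}, since the \cite{Gu2} half of the characterization ``recalled in the introduction'' is the very theorem you are proving.

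Your fallback route has a genuine gap in that same converse direction. The forward half (nested row supports give a Ferrers-type configuration whose $\Delta H_{\X}$ is the Hilbert function of a monomial artinian quotient of $k[x_1,y_1]$, whence ACM by Theorem \ref{GMR-main}) is workable. But to show ``$\Ssx$ not totally ordered $\Rightarrow \X$ not ACM'' you propose to ``use a pair of incomparable rows to pinpoint a bidegree at which $\Delta H_{\X}$ takes a negative value.'' The Hilbert function of $\X$ is not determined by the incidence array $(p_{ij})$ alone, so an incomparable pair of rows does not by itself locate a negative entry of $\Delta H_{\X}$; and Theorem \ref{depth=r-1} runs in the opposite direction --- it extracts a negative entry from the hypothesis $\depth(R/\Ix)=1$, which is exactly what you are trying to establish, not something you may assume. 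Converting an incomparable pair (equivalently, a violation of $(\star)$) into $\depth(R/\Ix)=1$ is the substantive content of \cite{Gu2,VT2}; it is not bookkeeping, and as written your second route does not supply it.
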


We now introduce a geometric condition on a set of points in
$\pr^n\times\pr^m$:

\begin{definition}
Let $\X$ be any finite set of points in $\pr^n\times\pr^m$.  We say that
$\X$ satisfies {\bf property $(\star)$} if whenever $P_1 \times Q_1$ and $P_2 \times
Q_2$ are two points in $\X$ with $P_1 \neq P_2$ and $Q_1 \neq Q_2$,
then either $P_1 \times Q_2 \in \X$ or $P_2 \times Q_1 \in \X$ (or both)
are in $\X$.
\end{definition}

\begin{theorem}\label{theorem*}
Let $\X$ be a finite set of points in $\pr^1 \times \pr^1$.  Then
$\X$ is ACM if and only if $\X$ satisfies property $(\star)$.
\end{theorem}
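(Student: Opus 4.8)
The plan is to deduce the statement from Theorem \ref{acm}, which already characterizes the ACM sets of points in $\popo$ as exactly those for which $\mathcal{S}_{\X}$ is totally ordered under $\succeq$. Thus the whole content of the theorem reduces to the purely combinatorial claim that $\X$ satisfies property $(\star)$ if and only if $\mathcal{S}_{\X}$ is a chain; the geometry of $\popo$ enters only through the bookkeeping of the array $(p_{ij})$.

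First I would fix the notation of the construction preceding Theorem \ref{acm}: write $\pi_1(\X) = \{P_1,\ldots,P_r\}$ and $\pi_2(\X) = \{Q_1,\ldots,Q_t\}$, and for each $i$ let $v_i = (p_{i1},\ldots,p_{it}) \in \{0,1\}^t$ be the $i$th tuple, so that $p_{ij} = 1$ precisely when $P_i \times Q_j \in \X$ and $\mathcal{S}_{\X} = \{v_1,\ldots,v_r\}$. I would also record the trivial observation that if $v_a = v_b$ then $P_a$ and $P_b$ index the same row, so two \emph{distinct} elements of $\mathcal{S}_{\X}$ always arise from distinct first coordinates.

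Next I would prove one direction by contraposition: if $\X$ fails property $(\star)$, then $\mathcal{S}_{\X}$ is not totally ordered. By assumption there are points $P_a \times Q_c$ and $P_b \times Q_d$ of $\X$ with $P_a \neq P_b$, $Q_c \neq Q_d$, yet $P_a \times Q_d \notin \X$ and $P_b \times Q_c \notin \X$; in particular $a \neq b$ and $c \neq d$. Then $v_a$ has a $1$ in position $c$ and a $0$ in position $d$, while $v_b$ has a $0$ in position $c$ and a $1$ in position $d$, so $v_a$ and $v_b$ are $\succeq$-incomparable and $\mathcal{S}_{\X}$ is not a chain. For the converse I would argue symmetrically: if $\mathcal{S}_{\X}$ is not totally ordered, pick distinct incomparable tuples $v_a, v_b$; incomparability produces a coordinate $c$ with $p_{ac} = 1$, $p_{bc} = 0$ and a coordinate $d$ with $p_{ad} = 0$, $p_{bd} = 1$, necessarily with $c \neq d$. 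Then $P_a \times Q_c, P_b \times Q_d \in \X$ witness a failure of $(\star)$, since $P_a \neq P_b$, $Q_c \neq Q_d$, and neither $P_a \times Q_d$ nor $P_b \times Q_c$ lies in $\X$. Combining the two implications with Theorem \ref{acm} yields the equivalence.

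There is no real obstacle here; the only points demanding a little care are matching the indices in property $(\star)$ (stated in terms of the points $P \times Q$) with the rows and columns of the array $(p_{ij})$, and noting that the two witnessing coordinates $c$ and $d$ are forced to be distinct, so that $Q_c \neq Q_d$ as the definition of $(\star)$ requires.
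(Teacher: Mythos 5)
Your proposal is correct and follows exactly the route the paper takes: it reduces the theorem to Theorem \ref{acm} by checking that property $(\star)$ is equivalent to $\mathcal{S}_{\X}$ being totally ordered under $\succeq$ (the paper dismisses this equivalence as ``a straightforward exercise,'' which you have simply carried out in detail, correctly handling the contrapositives and the fact that the witnessing coordinates are distinct). No further comment is needed.
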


\begin{proof}  A straightforward exercise will show that the condition
$(\star)$ is equivalent to the condition that $\Ssx$ is totally ordered.
Then apply Theorem \ref{acm}.
\end{proof}

\begin{example}
The simplest example of a nonACM set of points in $\popo$ are two
non-collinear points.  That is, $\X = \{P_1 \times P_1, P_2 \times P_2\}$
where $P_1,P_2$ are two distinct points in $\pr^1$.  Then $\X$ clearly
does not satisfy property $(\star)$.  In this case $\Ssx = \{(1,0),(0,1)\}$ which
is not totally ordered with respect to $\succeq$.
\end{example}

One direction of the above result holds more generally in
$\pr^1\times \pr^n$:

\begin{theorem} \label{geometriccondition}
Let $\X$ be a finite set of points in $\pr^1 \times \pr^n$.
If $\X$ satisfies property $(\star)$, then $\X$ is ACM.
\end{theorem}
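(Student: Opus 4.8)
The plan is to show directly that $R/I_{\X}$ is Cohen--Macaulay, exploiting the strong combinatorial structure that property $(\star)$ forces on a set of points in $\pr^1\times\pr^n$. The first move is to reinterpret $(\star)$. Writing $\pi_1(\X)=\{P_1,\dots,P_s\}$ for the distinct first coordinates, decompose $\X=\bigsqcup_{i=1}^{s}\{P_i\}\times\Y_i$ with each $\Y_i\subseteq\pr^n$ a nonempty finite set of points. A short combinatorial check shows that $\X$ satisfies $(\star)$ if and only if $\Y_1,\dots,\Y_s$ are totally ordered by inclusion; after relabeling, $\Y_1\supseteq\Y_2\supseteq\cdots\supseteq\Y_s$. (This is the natural analogue of $\Ssx$ being totally ordered in Theorem~\ref{acm}.) I will also use the standard fact that $I_{\{P_i\}\times\Y_i}=(\ell_i)+I_{\Y_i}'$, where $\ell_i\in k[x_{1,0},x_{1,1}]$ is the linear form vanishing at $P_i$ and $I_{\Y_i}'$ denotes the ideal of $\Y_i\subseteq\pr^n$ extended to $R$, so that $I_{\X}=\bigcap_{i=1}^{s}\big((\ell_i)+I_{\Y_i}'\big)$.

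Next I would induct on $s$ using a Mayer--Vietoris sequence. When $s=1$, $R/I_{\X}\cong\big(k[x_{1,0},x_{1,1}]/(\ell_1)\big)\otimes_k\big(k[x_{2,0},\dots,x_{2,n}]/I_{\Y_1}\big)$ is a tensor product over $k$ of two one-dimensional Cohen--Macaulay rings, hence Cohen--Macaulay of dimension $2$. For $s\geq2$, put $\X_s=\{P_s\}\times\Y_s$ (the fiber over the smallest $\Y_i$) and $\X'=\X\setminus\X_s$; then $\X'$ has $s-1$ distinct first coordinates whose fibers $\Y_1\supseteq\cdots\supseteq\Y_{s-1}$ still form a chain, so by induction $R/I_{\X'}$ is Cohen--Macaulay of dimension $2$, and $R/I_{\X_s}$ is Cohen--Macaulay of dimension $2$ exactly as in the base case. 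Since $I_{\X}=I_{\X'}\cap I_{\X_s}$, the sequence
\[
0\to R/I_{\X}\to R/I_{\X'}\oplus R/I_{\X_s}\to R/(I_{\X'}+I_{\X_s})\to0
\]
is exact; its middle term has depth $2$, so the depth lemma gives $\depth R/I_{\X}\geq\min\{2,\ \depth(R/(I_{\X'}+I_{\X_s}))+1\}$. It therefore suffices to show the cokernel $C:=R/(I_{\X'}+I_{\X_s})$ has depth at least $1$; together with $\dim R/I_{\X}=2$ (Theorem~\ref{depthdim}) this forces $R/I_{\X}$ Cohen--Macaulay, i.e.\ $\X$ ACM.

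The heart of the argument is to identify $C$. One works inside $R/I_{\X_s}\cong k[t]\otimes_k A_s$, where $A_s=k[x_{2,0},\dots,x_{2,n}]/I_{\Y_s}$ and $\deg t=e_1$. For $i<s$ the form $\ell_i$ maps to a nonzero scalar multiple of $t$ (because $P_i\neq P_s$), while $I_{\Y_i}'$ maps to $0$, since $\Y_i\supseteq\Y_s$ forces $I_{\Y_i}\subseteq I_{\Y_s}$. On one hand $\ell_1\cdots\ell_{s-1}\in\bigcap_{i<s}(\ell_i)\subseteq I_{\X'}$ has image a nonzero multiple of $t^{s-1}$, so the image of $I_{\X'}$ in $R/I_{\X_s}$ contains $(t^{s-1})$. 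On the other hand, since $\Y_{s-1}$ is the smallest among $\Y_1,\dots,\Y_{s-1}$ we have $I_{\Y_i}'\subseteq I_{\Y_{s-1}}'$ for $i<s$, hence $I_{\X'}=\bigcap_{i<s}\big((\ell_i)+I_{\Y_i}'\big)\subseteq\bigcap_{i<s}\big((\ell_i)+I_{\Y_{s-1}}'\big)$; because $k[x_{1,0},x_{1,1}]\otimes_k A_{s-1}$ is free over $k[x_{1,0},x_{1,1}]$ and the $\ell_i$ are pairwise non-proportional linear forms, this last intersection equals $(\ell_1\cdots\ell_{s-1})+I_{\Y_{s-1}}'$, whose image in $k[t]\otimes_k A_s$ is again $(t^{s-1})$. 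Thus the image of $I_{\X'}$ is exactly $(t^{s-1})$ and $C\cong\big(k[t]/(t^{s-1})\big)\otimes_k A_s$. Since $s\geq2$ this is a nonzero module over the one-dimensional Cohen--Macaulay ring $A_s$, and a generic degree-$e_2$ linear form that is a nonzerodivisor on $A_s$ remains a nonzerodivisor on $C$, so $\depth C\geq1$, completing the induction.

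I expect the delicate step to be the identification of $C$ in the previous paragraph, and within it the upper bound: showing that the image of $I_{\X'}$ in $R/I_{\X_s}$ is no larger than $(t^{s-1})$. This is precisely where the full chain $\Y_1\supseteq\cdots\supseteq\Y_s$ (equivalently, property $(\star)$) is used essentially, both to make every $I_{\Y_i}'$ with $i<s$ vanish in $A_s$ and to compare $I_{\Y_i}'$ with $I_{\Y_{s-1}}'$. The reduction of $\bigcap_{i<s}(\ell_i)$ to the single product $\ell_1\cdots\ell_{s-1}$ over $A_{s-1}$ is routine but should be argued from the freeness of $k[x_{1,0},x_{1,1}]\otimes_k A_{s-1}$ over $k[x_{1,0},x_{1,1}]$ rather than from $A_{s-1}$ being a domain, which it is not once $\Y_{s-1}$ has more than one point.
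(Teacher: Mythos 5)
Your proof is correct. It shares its overall skeleton with the paper's argument --- both induct on the number $s$ of distinct first coordinates, split off a single fiber $\{P\}\times \Y_P$, and run the Mayer--Vietoris sequence $0\to R/I_{\X}\to R/I_{\X'}\oplus R/I_{\X_P}\to R/(I_{\X'}+I_{\X_P})\to 0$, reducing everything to showing the overlap term is Cohen--Macaulay of dimension $1$ --- but the two arguments diverge in which fiber is removed and, consequently, in how the overlap is analyzed. The paper removes the fiber over a point $P$ whose fiber is \emph{largest}: property $(\star)$ guarantees one with $\pi_2(\X_P)=\pi_2(\X)$, which in your language is the top $\Y_1$ of the chain. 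Since every second coordinate of the remaining points then already lies in $\pi_2(\X_P)$, one gets $I_{\X'}+I_{\X_P}=I_{\X'}+(\ell_P)$ essentially for free, and the overlap is just the hyperplane section $R/(I_{\X'},\ell_P)$ of the inductively Cohen--Macaulay ring $R/I_{\X'}$, so its depth is immediate. You instead peel off the \emph{smallest} fiber $\Y_s$ and must therefore compute the overlap by hand, sandwiching the image of $I_{\X'}$ in $R/I_{\X_s}\cong k[t]\otimes_k A_s$ between $(t^{s-1})$ and itself. All the steps there check out: the equivalence of $(\star)$ with the chain condition on fibers, the identity $\bigcap_{i<s}\bigl((\ell_i)+I_{\Y_{s-1}}'\bigr)=(\ell_1\cdots\ell_{s-1})+I_{\Y_{s-1}}'$ via freeness over $k[x_{1,0},x_{1,1}]$, and the conclusion $C\cong \bigl(k[t]/(t^{s-1})\bigr)\otimes_k A_s\cong A_s^{\oplus(s-1)}$, which visibly admits a nonzerodivisor of degree $e_2$. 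The payoff of your version is an explicit description of the overlap ring, which the paper never needs; the cost is the extra work in identifying $C$, which the paper's choice of fiber avoids entirely. Your closing caution about arguing from freeness rather than from $A_{s-1}$ being a domain is exactly right.
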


\begin{proof}
Let $\X_P$ denote the subset of points in $\X$ whose
first coordinate is $P$ with $P \in \pi_1(\X)$.

\noindent
{\it Claim.}  There exists a point $P \in \pi_1(\X)$ such that $\pi_2(\X_P) = \pi_2(\X)$.

\noindent {\it Proof of Claim.}  We always have $\pi_2(\X_P)
\subseteq \pi_2(\X)$.  Let $P$ be a point of $\pi_1(\X)$ with
$|\pi_2(\X_P)|$ maximal.  We will show that this is the desired
point.
Suppose
there is $Q \in \pi_2(\X) \backslash \pi_2(\X_P)$.  So, there exists a $P \neq P' \in \pi_1(\X)$
such that $P' \times Q \in \X$.  Let $Q' \in \pi_2(\X_P)$ be any point.  So
$P \times Q'$ and $P' \times Q$ are points in $\X$.  By the hypotheses,
$P \times Q$ or $P' \times Q'$ are in $\X$.  But $P \times Q \not\in \X$ (else
$Q \in \pi_2(\X_P)$).  So, for each $Q' \in \pi_2(\X_P)$, $P' \times Q' \in \X$.
But this means $|\pi_2(\X_{P'})| > |\pi_2(\X_P)|,$ contradicting the maximality
of $|\pi_2(\X_P)|$. \hfill$\Box$

We now
prove the statement by induction on $|\pi_1(\X)|$.  If $|\pi_1(\X)|=1$, then
$\X$ is ACM.  To see this, note that $\Ix = I_P + I_{\pi_2(\X)}$ where
$I_P$ is the defining ideal of $P \in \pr^1$ in $R_1 = k[x_0,x_1]$,
but viewed as an ideal of $R =k[x_0,x_1,y_0,\ldots,y_n]$ and $I_{\pi_2(\X)}$
is the defining ideal of $\pi_2(\X) \subseteq \pr^n$ in $R_2= k[y_0,\ldots,y_n]$,
but viewed as an idea of  $R$.  So, the resolution of $R/\Ix \cong R_1/I_P \otimes_k
R_2/I_{\pi_2(\X)}$ is obtained by tensoring together the resolutions of
$R_1/I_P$ and $R_2/I_{\pi_2(\X)}$.  From this resolution we can obtain
the fact that $\X$ is ACM.

For the induction step, set $\Y = \X \backslash \X_{P}$, where $P$
is the point from the claim,
and thus $I_{\X} = I_{\X_P} \cap I_{\Y}$.  Note that $\Y$ also
satisfies $(\star)$, so by induction $\Y$ and $\X_P$ are ACM.
We have a short exact sequence
\[0 \rightarrow R/I_{\X} \rightarrow R/I_{\X_P} \oplus R/I_{\Y}
\rightarrow R/(I_{\Y} + I_{\X_P}) \rightarrow 0.\]
By induction
$R/I_{\X_P}$ and $R/I_{\Y}$ are CM of dimension 2.  It suffices to
show that $R/(I_{\Y}+I_{\X_P})$ is CM of dimension 1.  It then
follows that $R/I_{\X}$ is CM of dimension 2, i.e., $\X$ is ACM.

To prove this, we use the observation from above that $I_{\X_P} =
I_P + I_{\pi_2(\X_P)}$. Now any $G \in I_{\pi_2(\X_P)}$ is also
in $I_{\Y}$ since for any point $P' \times Q' \in \Y$, $Q' \in
\pi_2(\Y) \subseteq \pi_2(\X) = \pi_2(\X_P)$, and hence $G(P'
\times Q') = 0$.  Thus
\[I_{\Y} + I_{\X_P} = I_{\Y} + I_P.\]
Now, by change of coordinates, we can assume $I_P =
(x_0)$.  Also, we can assume that $x_0$ does not pass
through any points of $\pi_1(\Y)$.  So, $x_0$ is a nonzero divisor
of $R/I_{\Y}$.    To finish the proof
we note that by induction, $R/I_{\Y}$ is ACM of dimension 2, and
since $x_0$ is a nonzero divisor of $R/I_{\Y}$, we have
\[R/(I_{\Y}+I_{\X_P}) = R/(I_{\Y},x_0)\]
is CM of dimension 1.  The desired conclusion now holds.
\end{proof}

\begin{remark}
By interchanging the roles of the $x_i$'s and $y_i$'s in the above proof,
the conclusion of the previous theorem also holds for points in $\pr^n \times \pr^1$.
\end{remark}

\begin{remark} In trying
to generalize the above result to points in $\pr^m \times \pr^n$ we ran into the
following difficulty.  We still
have $I_{\Y} + I_{\X_P} = I_{\Y} + I_P$ where $P \in \pr^m$.  By changing
coordinates, we can take $I_P = (x_0,\ldots,x_{m-1})$, and we can assume
that $x_0$ does not pass through any points of $\pi_1(\Y)$.
So, $x_0$ is a nonzero divisor of $R/I_{\Y}$.  So, we know that
$R/(I_{\Y},x_0)$ is CM of dimension 1.   However, we were left
with the question of whether $R/(I_{\Y},x_0,\ldots,x_{m-1})$ is also CM if
$R/(I_{\Y},x_0)$ is CM.  Computer experimentation suggests a positive
answer to this question under the hypotheses of Theorem
\ref{geometriccondition}, thus suggesting Theorem \ref{geometriccondition} may hold
more generally for sets of points in $\pr^m \times \pr^n$.
\end{remark}

\begin{corollary}\label{nonacm}
Suppose $\X \subseteq \pr^1 \times \pr^n$ and $\X$ is not ACM.  Then
there exists a pair of points $P_1 \times Q_1$ and $P_2 \times Q_2 \in \X$ with
$P_1 \neq P_2$, $Q_1 \neq Q_2$, but $P_1 \times Q_2, P_2 \times Q_1 \not \in \X$.
\end{corollary}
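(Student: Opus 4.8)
The plan is to observe that this corollary is nothing more than the contrapositive of Theorem \ref{geometriccondition}, so the entire work lies in correctly negating property $(\star)$ and matching it to the stated conclusion. First I would recall what it means for $\X \subseteq \pr^1 \times \pr^n$ to fail property $(\star)$: there exist points $P_1 \times Q_1$ and $P_2 \times Q_2$ in $\X$ with $P_1 \neq P_2$ and $Q_1 \neq Q_2$ such that it is \emph{not} the case that $P_1 \times Q_2 \in \X$ or $P_2 \times Q_1 \in \X$; equivalently, $P_1 \times Q_2 \notin \X$ \emph{and} $P_2 \times Q_1 \notin \X$. This is precisely the conclusion of the corollary.

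Next I would invoke Theorem \ref{geometriccondition}, which asserts that if $\X$ satisfies $(\star)$ then $\X$ is ACM. Taking the contrapositive: if $\X$ is not ACM, then $\X$ does not satisfy $(\star)$. Combining this with the unwinding of the negation of $(\star)$ from the previous step yields the existence of the desired pair of points $P_1 \times Q_1, P_2 \times Q_2 \in \X$ with $P_1 \neq P_2$, $Q_1 \neq Q_2$, and $P_1 \times Q_2, P_2 \times Q_1 \notin \X$.

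There is essentially no obstacle here, since all the substance is already contained in Theorem \ref{geometriccondition}; the only thing to be careful about is the logical form of the negation of the disjunction in $(\star)$ (a De Morgan step), which is routine. I would present the proof in a single sentence or two, simply citing Theorem \ref{geometriccondition} and noting that the statement is its contrapositive, perhaps with a parenthetical reminder of what failing $(\star)$ means so the reader sees the equivalence immediately.

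\begin{proof}
This is the contrapositive of Theorem \ref{geometriccondition}. Indeed, if $\X$ is not ACM, then by Theorem \ref{geometriccondition} the set $\X$ does not satisfy property $(\star)$. By definition of property $(\star)$, this means there exist points $P_1 \times Q_1$ and $P_2 \times Q_2$ in $\X$ with $P_1 \neq P_2$ and $Q_1 \neq Q_2$ for which it is not true that $P_1 \times Q_2 \in \X$ or $P_2 \times Q_1 \in \X$; that is, $P_1 \times Q_2 \notin \X$ and $P_2 \times Q_1 \notin \X$, as claimed.
\end{proof}
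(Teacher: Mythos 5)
Your proof is correct and matches the paper's intent exactly: the corollary is stated there without proof precisely because it is the contrapositive of Theorem \ref{geometriccondition} combined with the routine De Morgan negation of property $(\star)$. Nothing further is needed.
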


While the converse of  Theorem \ref{geometriccondition} holds in $\popo$,
it fails in general.

\begin{example}\label{example3}
Let $P_i = [1:i] \in \pr^1$ for $i=1,\ldots,6$, and let
$P'_1,\ldots,P'_6$ be six points in general position
in $\pr^2$.  Set $Q_{i,j} = P_i \times P'_j \in \pr^1 \times \pr^2$.  Consider
the following set of 27 points:
\begin{eqnarray*}
\X &= & \{Q_{1,1},Q_{1,2},Q_{2,1},Q_{2,2},Q_{2,3},Q_{2,4},Q_{2,5},Q_{3,1},Q_{3,2},Q_{3,3},Q_{3,4},
Q_{3,5}, Q_{4,1},Q_{4,2},\\
&&Q_{4,4},Q_{4,5},Q_{5,1},Q_{5,2},Q_{5,3},Q_{5,4},Q_{5,6},Q_{6,1},Q_{6,2},
Q_{6,3},Q_{6,4},Q_{6,5},Q_{6,6}\}.
\end{eqnarray*}
Using {\tt CoCoA} to compute the resolution, we find
\[0 \rightarrow R^5 \rightarrow R^{13} \rightarrow R^9
\rightarrow R \rightarrow R/\Ix \rightarrow 0. \]
So $\X$ is ACM since the projective dimension is $3$ and $\dim
R = 5$, so by the Auslander-Buchsbaum formula, $\depth(R/\Ix) =2$.
But $\X$ fails property $(\star)$ since $Q_{4,5}$ and $Q_{5,3}$ are in
$\X$, but neither $Q_{4,3}$ nor $Q_{5,5}$ are in $\X$.
\end{example}

We end this section by describing a simple construction
to make sets of points that satisfy property $(\star)$.

\begin{definition}
\label{partitiondefinition} A tuple $\lambda =
(\lambda_1,\ldots,\lambda_r)$ of positive integers is a {\bf
partition} of an integer $s$ if $\sum \lambda_i = s$ and
$\lambda_i \geq \lambda_{i+1}$ for every $i$.  We write $\lambda =
(\lambda_1,\ldots,\lambda_r) \vdash s$.  To any partition $\lambda
\vdash s$ we can associate the
following diagram:  on an $r \times \lambda_1$ grid, place
$\lambda_1$ points on the first line, $\lambda_2$ points on the
second, and so on.  The resulting diagram is called the {\bf
Ferrer's diagram} of $\lambda$.
\end{definition}

\begin{construction}
Let $\lambda = (\lambda_1,\ldots,\lambda_r) \vdash s$, and let
$P_1,\ldots,P_r$ be $r$ distinct points
in $\pr^1$ and $Q_1,\ldots,Q_{\lambda_1}$ be $\lambda_1$
distinct points in $\pr^n$.  Let $\X_{\lambda}$
then be the $s$ points of $\pr^1 \times \pr^n$ where
\begin{eqnarray*}
\X_{\lambda}&=&\{P_1\times Q_1,P_1\times Q_2,\ldots,P_1 \times Q_{\lambda_1},
P_2 \times Q_1, \ldots,P_2 \times Q_{\lambda_2}, \ldots\\
& & P_r \times Q_1,\ldots, P_r \times Q_{\lambda_r}\}.
\end{eqnarray*}
The set of points $\X_{\lambda}$ then resembles a Ferrer's
diagram of $\lambda$ and satisfies property $(\star)$.  By
Theorem \ref{geometriccondition}:
\begin{theorem}\label{Ferrer}
With the notation as above, $\X_{\lambda}$ is ACM.
\end{theorem}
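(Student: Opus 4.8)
The plan is to reduce the statement directly to Theorem \ref{geometriccondition}: I would show that $\X_\lambda$ satisfies property $(\star)$, after which ACMness is immediate. So the only real content is a short combinatorial check exploiting the staircase shape of the Ferrer's diagram of $\lambda$.

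First I would fix notation. Since $\lambda = (\lambda_1,\ldots,\lambda_r)\vdash s$, the point $P_i \times Q_j$ lies in $\X_\lambda$ precisely when $1 \le i \le r$ and $1 \le j \le \lambda_i$. Because the $P_i$ are distinct points of $\pr^1$ and the $Q_j$ are distinct points of $\pr^n$, two such points $P_a \times Q_b$ and $P_c \times Q_d$ have distinct first coordinates exactly when $a \ne c$, and distinct second coordinates exactly when $b \ne d$; in particular the $P_i \times Q_j$ with $j \le \lambda_i$ are pairwise distinct, so $|\X_\lambda| = \lambda_1 + \cdots + \lambda_r = s$, as asserted in the construction.

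Next I would verify $(\star)$. Take $P_a \times Q_b$ and $P_c \times Q_d$ in $\X_\lambda$ with $a \ne c$ and $b \ne d$, and assume without loss of generality that $a < c$. Since $\lambda$ is a partition, $\lambda_a \ge \lambda_c$. Membership $P_c \times Q_d \in \X_\lambda$ forces $d \le \lambda_c \le \lambda_a$, and hence $P_a \times Q_d \in \X_\lambda$. Thus one of the two ``completing'' points is present, which is exactly property $(\star)$. Applying Theorem \ref{geometriccondition} then yields that $\X_\lambda$ is ACM, and in the case $n=1$ one can cross-check against Theorem \ref{theorem*}.

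I do not anticipate any genuine obstacle here: the one point requiring care is the direction of the inequalities coming from the weakly decreasing partition, namely that the row with the larger index is the shorter one, so that the corner guaranteed to lie in $\X_\lambda$ is the one in the row of smaller index. Everything else is bookkeeping, and no result beyond Theorem \ref{geometriccondition} is needed.
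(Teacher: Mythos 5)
Your proposal is correct and is exactly the argument the paper intends: the construction is set up so that $\X_\lambda$ satisfies property $(\star)$ (your check via $a<c \Rightarrow \lambda_a \ge \lambda_c \ge d$ is the right one), and Theorem \ref{geometriccondition} then gives ACMness. The paper merely asserts the $(\star)$ verification, so your write-up just makes explicit what the authors leave implicit.
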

\end{construction}


\section{ACM sets of points and their separators}

In this section we study ACM set of points using the notion of a
separator.  Separators for points in $\pr^n$ were first
introduced by  Orecchia \cite{O} and their properties were later
studied in \cite{abm,b,b2,m1}, to name but a few references.
Separators were recently defined in a multigraded setting by
Marino in \cite{m1,m2} for the special case of points in
$\popo$.   In particular, Marino classified ACM sets of points in
$\popo$ using separators;  we extend some of these ideas in this
section.

We begin by introducing some notation.  If
$S \subseteq \N^r$ is a subset, then $\min S$ is the set of the
minimal elements of $S$ with respect to the partial ordering $\succeq$. For
any $\ui\in \N^r$, define $D_{\ui}:=\{\uj \in \N^r ~|~ \uj \succeq
\ui\}.$ For any finite set $S=\{\underline{s}_1,\ldots,\underline{s}_{p}\}\subseteq \N^r$,
we set \[D_S:=\bigcup_{\underline{s}\in S} D_{\underline{s}}.\]
Note that $ \min D_S = S$; thus $D_S$ can be viewed as the largest
subset of $\N^r$ whose minimal elements are the elements of $S$.

\begin{definition} Let $\X$ be a set of distinct points in $\pnr$
and $P \in \X$. We say that the multihomogeneous form $F\in R$ is
a {\bf separator for} $P$ if $F(P)\neq 0$ and $F(Q)=0$ for all $Q
\in \X \setminus\{P\}$.   We will call $F$ a {\bf minimal
separator} for $P$ if there does not exist a separator $G$ for $P$
with $\deg G \prec \deg F$.

\end{definition}

\begin{definition} Let $\X$ be a set of distinct points in $\pnr$.
Then the {\bf degree of a point} $P\in \X$ is the set
\begin{eqnarray*}
\deg_{\X}(P)&:=&\min\{ \deg F ~|~ F ~~\text{is a separator for $P\in \X$}\}\\
&=&\{\deg F ~|~ \mbox{$F$ is a minimal separator of $P \in \X$}\}.
\end{eqnarray*}
Here, we are using the partial order $\succeq$ on $\N^r$.
\end{definition}

If $\X \subseteq \pr^n$, then $\N$ is a totally ordered set, so we
can talk about {\it the} degree of a point $P \in \X$ (as in
\cite{abm,b,b2,O}). In the
multigraded case, however, the set $\deg_{\X}(P) =
\{\ua_1,\ldots,\ua_s\} \subseteq \N^r$ may have more than one
element. As we will show below, if $F$ is a separator of $P$ with
$\deg F = \ua_i \in \deg_{\X}(P)$, then the equivalence
class of $F$ in $R/\Ix$, that is, $\overline{F}$, is unique
up to scalar multiplication.

\begin{theorem} \label{boundsonHx}
Let $\X$ be a set of distinct points in $\pnr$, and let $P \in \X$
be any point.  If $\Y = \X \backslash \{P\}$, then there exists a
finite set $S \subseteq \N^r$ such that \[H_{\Y}(\ui) =
\begin{cases} H_{\X}(\ui) & \text{if $\ui \notin D_{S}$} \\
H_{\X}(\ui)-1   & \text{if $\ui \in D_{S}$.}
\end{cases}\]
Equivalently, $\dim_k (I_{\X})_{\ui} \leq \dim_k(I_{\Y})_{\ui}\leq
\dim_k (I_{\X})_{\ui} + 1$ for all $\ui \in \N^r$.\end{theorem}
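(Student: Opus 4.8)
The plan is to compare the ideals $I_{\X}$ and $I_{\Y}$ where $\Y = \X \setminus \{P\}$, using the fact that $I_{\X} \subseteq I_{\Y}$ since $\Y$ is a subset of $\X$. The key structural observation is that $I_{\Y}/I_{\X}$ is, up to a degree shift, a cyclic $R/I_{\Y}$-module (in fact a one-dimensional object related to the local ring at $P$), and this is what forces the codimension of $(I_{\X})_{\ui}$ inside $(I_{\Y})_{\ui}$ to be at most $1$. Concretely, I would first establish the inequality $\dim_k (I_{\X})_{\ui} \leq \dim_k (I_{\Y})_{\ui} \leq \dim_k (I_{\X})_{\ui} + 1$, and then deduce the description via $S$ from it.

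First I would show the lower inequality is trivial ($I_{\X} \subseteq I_{\Y}$) and focus on the upper bound $\dim_k (I_{\Y})_{\ui} - \dim_k (I_{\X})_{\ui} \leq 1$. For this, I would use that evaluation at $P$ gives a $k$-linear functional: for a fixed degree $\ui$, pick a separator-type element, or more cleanly, note that $(I_{\Y})_{\ui}/(I_{\X})_{\ui}$ injects into the residue field at $P$. Precisely, choose homogeneous forms $M_1, \dots, M_{n_1}, \dots$ — actually the cleanest route is: let $F, G \in (I_{\Y})_{\ui}$; since $I_{P}$ is prime and $F, G$ both vanish on $\Y$ but the question is their value at $P$, one can find $\lambda \in k$ so that $\lambda F - G$ vanishes at $P$ as well (because the values $F(P)$ and $G(P)$ lie in a $1$-dimensional space once we fix a representative of $P$ in appropriate coordinates), hence $\lambda F - G \in (I_{\X})_{\ui}$. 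This shows the quotient $(I_{\Y})_{\ui}/(I_{\X})_{\ui}$ has dimension at most $1$. Combined with the standard exact-sequence bookkeeping relating $H_{\X}$, $H_{\Y}$, and $\dim_k (I_{\bullet})_{\ui}$ via $H_{S}(\ui) = \dim_k R_{\ui} - \dim_k (I)_{\ui}$, this yields $H_{\X}(\ui) - 1 \leq H_{\Y}(\ui) \leq H_{\X}(\ui)$ for every $\ui$.

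Next I would identify the set $S$. Define $T := \{\ui \in \N^r \mid H_{\Y}(\ui) = H_{\X}(\ui) - 1\}$, i.e. the set of degrees where a separator of that degree exists. The content remaining is that $T$ is an up-set (order filter): if $\ui \in T$ and $\uj \succeq \ui$, then $\uj \in T$. This follows because if $F$ is a form of degree $\ui$ separating $P$ from $\Y$, and if $\uj = \ui + \underline{l}$, then multiplying $F$ by a monomial of degree $\underline{l}$ that does not vanish at $P$ (such a monomial exists: in each factor $\pr^{n_t}$ pick a coordinate not vanishing at the relevant component of $P$) produces a separator of degree $\uj$; hence $\dim_k (I_{\Y})_{\uj} > \dim_k (I_{\X})_{\uj}$, so $\uj \in T$. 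Since $T$ is an up-set in $\N^r$ and $\N^r$ has the property that every up-set has finitely many minimal elements (Dickson's lemma), setting $S := \min T$ gives a finite set with $T = D_S$, and the case distinction in the statement is exactly this.

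The main obstacle I anticipate is the codimension-one argument, i.e. genuinely justifying that $(I_{\Y})_{\ui}/(I_{\X})_{\ui}$ has dimension at most $1$; one must handle the evaluation-at-$P$ map carefully since $P$ lives in a product of projective spaces and ``the value of $F$ at $P$'' is only well-defined up to the simultaneous scalars coming from each $\pr^{n_t}$, but for a fixed multidegree $\ui$ the ambiguity is a single overall scalar, so the kernel-codimension-one conclusion is correct — spelling this out cleanly (perhaps by a coordinate change so that a fixed monomial $x_{1,0}^{i_1}\cdots x_{r,0}^{i_r}$ doesn't vanish at $P$, then comparing any $F \in (I_{\Y})_{\ui}$ to a scalar multiple of that monomial's ``component'') is the part that needs the most care. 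The up-set argument and the appeal to finiteness of minimal elements are then routine.
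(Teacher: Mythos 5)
Your proof is correct and is essentially the paper's argument in different clothing: your evaluation-at-$P$ functional on $(I_{\Y})_{\ui}$ is exactly the map onto $((I_{\Y}+I_P)/I_P)_{\ui} \subseteq (R/I_P)_{\ui} \cong k$ that the paper extracts from the short exact sequence $0 \to R/I_{\X} \to R/I_{\Y} \oplus R/I_P \to R/(I_{\Y}+I_P) \to 0$, and your up-set step (multiplying a separator by a monomial not vanishing at $P$) is the paper's observation that $(I_{\Y}+I_P)/I_P$ is an ideal of $k[z_1,\ldots,z_r]$, so once the quotient's Hilbert function hits $0$ it stays $0$ in all larger degrees. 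The only point you make more explicit than the paper is the finiteness of $S=\min T$ via Dickson's lemma, which the paper leaves implicit.
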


\begin{proof} The second statement follows from the first since
the formula implies $H_{\X}(\ui)-1 \leq H_{\Y}(\ui) \leq
H_{\X}(\ui)$ for all $\ui \in \N^r$. To prove the first statement,
the short exact sequence \[0 \rightarrow R/(I_{\Y} \cap I_P)
\rightarrow R/I_{\Y} \oplus R/I_P \rightarrow R/(I_{\Y} + I_P)
\rightarrow 0\] implies that \[H_{\Y}(\ui) = H_{\X}(\ui) -
H_{P}(\ui) + H_{R/(I_{\Y}+I_P)}(\ui) ~~ \mbox{for all $\ui \in
\N^r$}\] since $I_{\Y} \cap I_P = I_{\X}$.

Now $R/I_P \cong k[z_1,\ldots,z_r]$, the $\N^r$-graded ring with
$\deg z_i = e_i$. So $H_{P}(\ui) = 1$ for all $\ui \in \N^r$.
Also, \[R/(I_{\Y} + I_P) \cong (R/I_P)/((I_{\Y} + I_P)/(I_P)).\]
So, $(I_{\Y}+I_P)/I_P \cong J$, where $J$ is an $\N^r$-homogeneous
ideal of $k[z_1,\ldots,z_r]$. Thus $H_{R/(I_{\Y}+I_P)}(\ui) = 0$
or $1$ for all $\ui \in \N^r$.

When $H_{R/(I_{\Y} + I_P)}(\ui) = 0$, then $H_{R/(I_{\Y} +
I_P)}(\uj) = 0$ for all $\uj \succeq \ui$. The desired set is then
$S = \min \mathcal{T}$ where $\mathcal{T} = \{\ui \in \N^r ~|~ H_{R/(I_{\Y} +
I_P)}(\ui) = 0\}$. \end{proof}

\begin{corollary}\label{degreeunique}
Suppose $\deg_{\X}(P) = \{\ua_1,\ldots,\ua_s\} \subseteq \N^r$. If
$F$ and $G$ are any two minimal separators of $P$ with $\deg F =
\deg G = \ua_i$, then $G = cF +H$ for some $0 \neq c \in k$ and $H
\in (\Ix)_{\ua_i}$. Equivalently, there exists $0 \neq c \in k$
such that $\overline{G} = \overline{cF} \in R/\Ix$.
\end{corollary}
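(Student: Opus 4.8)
The plan is to read the statement off from the dimension count in Theorem~\ref{boundsonHx}. Set $\Y = \X \setminus \{P\}$. The key elementary observation is that a separator of $P$ is precisely a form that lies in $I_{\Y}$ but not in $\Ix$: the condition $F(Q)=0$ for all $Q\in\Y$ says $F\in I_{\Y}$, while $F(P)\neq 0$ together with $\Ix = I_{\Y}\cap I_P$ says $F\notin \Ix$. Hence, if $F$ is a separator of $P$ of degree $\ua_i$, then $(\Ix)_{\ua_i}\subsetneq (I_{\Y})_{\ua_i}$, so $\dim_k (I_{\Y})_{\ua_i} \geq \dim_k (\Ix)_{\ua_i}+1$.

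First I would combine this with the inequality supplied by Theorem~\ref{boundsonHx}, namely $\dim_k (I_{\Y})_{\ui}\leq \dim_k (\Ix)_{\ui}+1$ for every $\ui\in\N^r$, to force the equality $\dim_k (I_{\Y})_{\ua_i} = \dim_k (\Ix)_{\ua_i}+1$. In particular the quotient vector space $V := (I_{\Y})_{\ua_i}/(\Ix)_{\ua_i}$ is one-dimensional.

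Next, given minimal separators $F$ and $G$ of $P$ with $\deg F = \deg G = \ua_i$, both lie in $(I_{\Y})_{\ua_i}$ and, since neither vanishes at $P$, neither lies in $(\Ix)_{\ua_i}$; thus their images $\overline{F},\overline{G}$ in $V$ are both nonzero. As $\dim_k V = 1$, there is a unique scalar $0\neq c\in k$ with $\overline{G}=c\,\overline{F}$ in $V$, i.e.\ $G-cF\in (\Ix)_{\ua_i}$. Setting $H := G - cF$ gives $G = cF + H$ with $H\in (\Ix)_{\ua_i}$; reducing modulo $\Ix$ yields the equivalent formulation $\overline{G}=\overline{cF}$ in $R/\Ix$, the two statements agreeing because $G-cF$ is homogeneous of degree $\ua_i$.

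I do not expect any genuine obstacle here once Theorem~\ref{boundsonHx} is available: the entire content is the identification of ``separator of $P$ in degree $\ua_i$'' with ``nonzero element of the line $V$'', after which the conclusion is linear algebra. The only point requiring a little care is checking that the dimension jump $\dim_k(I_{\Y})_{\ua_i}-\dim_k(\Ix)_{\ua_i}$ is exactly $1$ (and not merely $\geq 1$), which is precisely where the upper bound in Theorem~\ref{boundsonHx} is used.
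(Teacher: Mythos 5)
Your proof is correct and is essentially the paper's argument: both rest on the bound $\dim_k (I_{\Y})_{\ua_i} \leq \dim_k (\Ix)_{\ua_i}+1$ from Theorem~\ref{boundsonHx} together with the observation that separators of degree $\ua_i$ are exactly the elements of $(I_{\Y})_{\ua_i}\setminus(\Ix)_{\ua_i}$. The only difference is presentational: the paper argues by contradiction (if $G\neq cF+H$ then $(\Ix,F,G)_{\ua_i}$ forces a dimension jump of $2$), whereas you phrase the same dimension count directly as the one-dimensionality of the quotient $(I_{\Y})_{\ua_i}/(\Ix)_{\ua_i}$.
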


\begin{proof}
Suppose $F$ and $G$ are separators of $P$ and $\deg F = \deg G =
\ua$ for some $\ua \in \deg_{\X}(P)$.  Suppose that $G \neq cF + H$
for any nonzero scalar $c \in k$ and any $H \in (\Ix)_{\ua}$.  Then the vector space
$(I_{\X},F,G)_{\ua} \subseteq (I_{\Y})_{\ua}$
where $\Y = \X\setminus\{P\}$.  Since $F \not\in
(I_{\X})_{\ua}$, and since $G \not\in (\Ix,F)_{\ua}$, we must have
\[\dim_k (I_{\Y})_{\ua} \geq \dim_k (I_{\X},F,G)_{\ua} \geq \dim_{k} (I_{\X})_{\ua} + 2.\]
However, this inequality contradicts the conclusion of Theorem \ref{boundsonHx}.
\end{proof}

\begin{theorem} \label{teopt} Let $\X$ be a set of distinct points in $\pnr$,
and suppose $F$ is a separator of a point $P \in \X$.
Then $(I_{\X}:F) = I_P$.
\end{theorem}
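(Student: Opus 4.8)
**The plan is to prove $(I_{\X}:F) = I_P$ by a double inclusion, with most of the content going into the inclusion $(I_{\X}:F) \subseteq I_P$.**

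First I would establish the easy inclusion $I_P \subseteq (I_{\X}:F)$. Take any generator $L_{i,j}$ of $I_P$; since $\deg L_{i,j} = e_i$ and $L_{i,j}$ vanishes at $P$, the product $L_{i,j}F$ vanishes at $P$ (because $L_{i,j}(P)=0$) and vanishes at every $Q \in \Y = \X\setminus\{P\}$ (because $F(Q)=0$ by the definition of a separator). Hence $L_{i,j}F \in I_{\X}$, so $L_{i,j} \in (I_{\X}:F)$, giving $I_P \subseteq (I_{\X}:F)$.

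For the reverse inclusion, let $G \in (I_{\X}:F)$, so $GF \in I_{\X} = I_P \cap I_\Y$. In particular $GF \in I_P$, and since $I_P$ is prime and $F \notin I_P$ (because $F(P) \neq 0$), we conclude $G \in I_P$. This is the clean argument, and it is essentially immediate from primeness of $I_P$; the only point to check is that $F \notin I_P$, which holds precisely because $F$ is a separator and thus $F(P)\neq 0$. So in fact $(I_{\X}:F) \subseteq I_P$, and combined with the first paragraph we get equality.

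I expect there is no serious obstacle here: the statement follows almost formally from the defining property of a separator ($F(P)\neq 0$, $F(Q)=0$ for $Q\neq P$) together with the fact that $I_P$ is a prime ideal and $I_{\X} = I_P \cap I_\Y$. If one wanted to avoid invoking primeness directly, an alternative would be to argue pointwise/radically: $(I_\X : F)$ is contained in the ideal of the set where $F$ vanishes intersected appropriately, but this is more cumbersome and the primeness argument is the natural one. The one subtlety worth a sentence in the writeup is that $(I_{\X}:F)$ is a multihomogeneous ideal (since $I_{\X}$ and $F$ are multihomogeneous), so it suffices to test the inclusion on multihomogeneous elements $G$, though in fact the primeness argument does not even require this reduction.
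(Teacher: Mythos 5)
Your proof is correct and is essentially identical to the paper's: the forward inclusion because $GF$ vanishes on all of $\X$ for $G\in I_P$, and the reverse inclusion from $GF\in I_{\X}\subseteq I_P$, $F\notin I_P$, and primeness of $I_P$. The extra remarks about multihomogeneity are harmless but, as you note, unnecessary.
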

\begin{proof}
For any $G \in I_P$, $FG \in I_{\X}$ since $FG$ vanishes at all points of  $\X$.  Conversely,
let $G \in (I_{\X}:F)$.  So $GF \in I_{\X} \subseteq I_P$.  Now $F \not\in I_P$, and because
$I_P$ is a prime ideal, we have $G \in I_P$, as desired.
\end{proof}

\begin{corollary}\label{cordim}
 With the hypotheses as in the previous theorem,
\[\dim_k (I_{\X},F)_{\ui} = \dim_k (I_{\X})_{\ui} + 1 ~~\mbox{for all $\ui \succeq \deg F$.}\]
\end{corollary}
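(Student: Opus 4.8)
The plan is to exhibit $(I_{\X},F)_{\ui}/(I_{\X})_{\ui}$ as the image of a multiplication-by-$F$ map and to compute the dimension of that image using the colon-ideal identity of Theorem \ref{teopt}.

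First I would fix $\ui \succeq \deg F$ and introduce the $k$-linear map
\[
\varphi_{\ui}\colon R_{\ui-\deg F}\longrightarrow (R/I_{\X})_{\ui},\qquad g\longmapsto \overline{Fg}.
\]
Since $(I_{\X},F)_{\ui}=(I_{\X})_{\ui}+F\cdot R_{\ui-\deg F}$, the image of $\varphi_{\ui}$ is exactly $(I_{\X},F)_{\ui}/(I_{\X})_{\ui}$, so that $\dim_k(I_{\X},F)_{\ui}=\dim_k(I_{\X})_{\ui}+\dim_k\operatorname{im}\varphi_{\ui}$. It therefore suffices to prove $\dim_k\operatorname{im}\varphi_{\ui}=1$.

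Next I would compute the kernel: $g\in\ker\varphi_{\ui}$ precisely when $Fg\in I_{\X}$, i.e. when $g\in (I_{\X}:F)_{\ui-\deg F}$. By Theorem \ref{teopt} we have $(I_{\X}:F)=I_P$, hence $\ker\varphi_{\ui}=(I_P)_{\ui-\deg F}$ and
\[
\dim_k\operatorname{im}\varphi_{\ui}=\dim_k R_{\ui-\deg F}-\dim_k (I_P)_{\ui-\deg F}=H_P(\ui-\deg F).
\]
Finally, as recorded in the proof of Theorem \ref{boundsonHx}, $R/I_P\cong k[z_1,\ldots,z_r]$ with $\deg z_i=e_i$, so $H_P(\uj)=1$ for every $\uj\succeq\underline{0}$; since the hypothesis $\ui\succeq\deg F$ guarantees $\ui-\deg F\succeq\underline{0}$, we conclude $H_P(\ui-\deg F)=1$, which gives the claim.

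There is no real obstacle here: the statement follows immediately from Theorem \ref{teopt} together with the triviality of the Hilbert function of a single point, and in particular the computed dimension $1$ also reproves that $F\notin (I_{\X})_{\ui}$ for all $\ui\succeq\deg F$. The only point that needs a moment's attention is the bookkeeping of the degree shift by $\deg F$, namely checking that $\ui-\deg F$ stays in $\N^r$—which is exactly what $\ui\succeq\deg F$ ensures.
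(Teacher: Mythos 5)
Your proof is correct and is essentially the paper's argument: the paper packages your multiplication-by-$F$ map into the short exact sequence $0 \to R/(I_{\X}:F)(-\deg F) \to R/I_{\X} \to R/(I_{\X},F) \to 0$, invokes Theorem \ref{teopt} to identify $(I_{\X}:F)$ with $I_P$, and uses $H_{R/I_P}\equiv 1$ exactly as you do. You have merely unwound that sequence degree by degree, so there is no substantive difference.
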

\begin{proof}
We have a short exact sequence
\[0 \rightarrow R/(I_{\X}:F)(-\deg F)
\stackrel{\times F}{\longrightarrow} R/I_{\X} \longrightarrow R/(I_{\X},F) \rightarrow 0.\]
By the previous theorem $R/(I_{\X}:F) \cong R/I_P$.  So
\[H_{R/(I_{\X},F)}(\ui) = H_{\X}(\ui) - H_{R/I_P}(\ui - \deg F) ~~\mbox{for all $\ui \in \N^r$.}\]
Now $H_{R/I_P}(\ui) = 1$ for all $\ui \in \N^r$, and equals $0$ otherwise.  The
conclusion follows.
\end{proof}

The main theorem of this section shows that every point
$P \in \X$ has a unique degree
if $\X$ is ACM.

\begin{theorem}\label{uniqueMultsep}
Let $\X$ be any ACM set of points in $\pnr$.   Then for any point $P \in \X$
we have $|\deg_{\X}(P)|  = 1$.
\end{theorem}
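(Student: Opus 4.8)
\emph{Plan.} I will argue by contradiction. Since $\deg_{\X}(P)$ is, by definition, an antichain in $\N^r$ with respect to $\succeq$, if it were not a singleton it would contain two incomparable multidegrees $\underline a\neq\underline b$; from a minimal separator of $P$ of degree $\underline a$ and one of degree $\underline b$ I will manufacture a separator of $P$ of degree $\underline d:=\min\{\underline a,\underline b\}$ (coordinatewise), and since $\underline a,\underline b$ are incomparable we have $\underline d\prec\underline b$ strictly, contradicting $\underline b\in\deg_{\X}(P)$. (That $\deg_{\X}(P)\neq\emptyset$ is clear: a product over $Q\in\X\setminus\{P\}$ of degree-$e_i$ forms vanishing at $Q$ but not at $P$ is a separator.) Throughout I use that, as $\X$ is ACM, after the change of coordinates of Theorem \ref{regularsequence} and Remark \ref{regularsequenceremark} the elements $x_{1,0},\ldots,x_{r,0}$ form a regular sequence on $B:=R/\Ix$ with $\deg x_{i,0}=e_i$, none of them vanishes at a point of $\X$, and --- since $B$ is Cohen--Macaulay --- any subset of them, in any order, is again a regular sequence. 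Write $x_0^{\underline v}:=\prod_i x_{i,0}^{v_i}$; since $x_0^{\underline v}$ never vanishes on $\X$, multiplying a separator of $P$ by it yields a separator of $P$ whose degree is raised by $\underline v$.

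\emph{Step 1 (go up to the join).} Let $F,G$ be minimal separators of $P$ of degrees $\underline a,\underline b$, let $\underline c:=\max\{\underline a,\underline b\}$, and set $F^{\ast}:=x_0^{\underline c-\underline a}F$, $G^{\ast}:=x_0^{\underline c-\underline b}G$, both separators of $P$ of degree $\underline c$. Putting $\Y=\X\setminus\{P\}$, we have $(I_{\Y})_{\underline c}\supseteq (I_{\X})_{\underline c}+kF^{\ast}+kG^{\ast}$, while Theorem \ref{boundsonHx} gives $\dim_k(I_{\Y})_{\underline c}\le\dim_k(I_{\X})_{\underline c}+1$; this forces $\overline{F^{\ast}}$ and $\overline{G^{\ast}}$ to be linearly dependent in $B$, and since $F^{\ast}\notin I_{\X}$ (because $F^{\ast}(P)\neq0$) we get $\overline{G^{\ast}}=\lambda\,\overline{F^{\ast}}$ in $B$ with $\lambda\neq0$ (as $G^{\ast}(P)\neq0$). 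This is exactly the mechanism behind Corollary \ref{degreeunique}. Set $\ell:=x_0^{\underline c-\underline a}$ and $m:=x_0^{\underline c-\underline b}$; because $\underline c$ is the coordinatewise maximum, $\ell$ and $m$ are monomials in \emph{disjoint} subsets of $\{x_{1,0},\ldots,x_{r,0}\}$, and the relation above reads $m\,\overline G=\lambda\,\ell\,\overline F$ in $B$.

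\emph{Step 2 (cancel $m$).} I claim $\overline G\in\ell B$. This rests on the elementary fact that if $\ell,m$ are monomials in disjoint subsets of a regular sequence $x_{1,0},\ldots,x_{r,0}$ of positive-degree homogeneous elements in a Cohen--Macaulay graded ring $B$, then $m$ is a nonzerodivisor on $B/\ell B$: one inducts on $\deg\ell$, writing $\ell=x_{i_0,0}\ell'$ and using the exact sequence $0\to B/x_{i_0,0}B\to B/\ell B\to B/\ell'B\to0$ (legitimate since $\ell'$ is a nonzerodivisor on $B$), together with the facts that $m$ is a nonzerodivisor on $B/\ell'B$ (induction) and on $B/x_{i_0,0}B$ (each variable occurring in $m$ together with $x_{i_0,0}$ is a two-element subset of the regular sequence). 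Hence $m\,\overline G=\lambda\,\ell\,\overline F\in\ell B$ gives $\overline G=\ell\,\overline H$ with $\overline H\in B$ homogeneous; lifting, $H$ is a form of degree $\underline b-(\underline c-\underline a)=\underline d$.

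\emph{Step 3 (conclude).} For every $Q\in\X$ we have $G-\ell H\in I_{\X}\subseteq I_Q$ and $\ell\notin I_Q$ (it is a product of powers of the $x_{i,0}$, none vanishing at $Q$); since $I_Q$ is prime, $H\in I_Q\Leftrightarrow G\in I_Q$. Thus $H(P)\neq0$ and $H$ vanishes on $\Y$, i.e.\ $H$ is a separator of $P$ of degree $\underline d\prec\underline b$, contradicting the minimality of $\underline b$ in $\deg_{\X}(P)$. Therefore $|\deg_{\X}(P)|\le1$, hence $=1$. I expect the crux to be the combination of (i) recognizing that $\underline c=\max\{\underline a,\underline b\}$ is the degree at which Theorem \ref{boundsonHx} pins $\overline{F^{\ast}}$ and $\overline{G^{\ast}}$ together, and (ii) the cancellation in Step 2, where Cohen--Macaulayness of $R/\Ix$ is precisely what makes the relevant subsets of $\{x_{1,0},\ldots,x_{r,0}\}$ regular sequences. (An alternative route --- showing directly that $I_{\Y}/\Ix$ is a cyclic ideal of $R/I_P\cong k[z_1,\dots,z_r]$, which it is always a monomial ideal of by Theorem \ref{boundsonHx} --- seems to require controlling $\depth(R/I_{\Y})$, which is not at hand, so the separator manipulation above is cleaner.)
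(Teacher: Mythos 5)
Your proof is correct, and it follows the same basic strategy as the paper's: pass to the join degree $\underline{c}=\max\{\underline{a},\underline{b}\}$, use Theorem \ref{boundsonHx} to force $x_0^{\underline{c}-\underline{b}}\overline{G}=\lambda\, x_0^{\underline{c}-\underline{a}}\overline{F}$ in $R/\Ix$, and then exploit the regular sequence $x_{1,0},\ldots,x_{r,0}$ (and the fact that, by Cohen--Macaulayness, its subsets in any order remain regular) to cancel and manufacture a separator of smaller degree. Where you diverge is in how the cancellation is carried out, and your version is cleaner. The paper cancels only the single block $x_{p,0}^{c_p-b_p}$ modulo $x_{s,0}$, which yields a separator $H_2$ of a degree that is not obviously below $\underline{b}$ in the partial order; it then has to order all the minimal separators lexicographically and run a somewhat delicate argument comparing $\deg H_2$ with $\deg F_1$ to reach a contradiction. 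You instead cancel the entire monomial $m=x_0^{\underline{c}-\underline{b}}$ in one stroke, via the observation that $\ell=x_0^{\underline{c}-\underline{a}}$ and $m$ are supported on disjoint subsets of the regular sequence, so $m$ is a nonzerodivisor on $B/\ell B$ (your inductive lemma using $0\to B/x_{i_0,0}B\to B/\ell B\to B/\ell'B\to 0$ is sound, since $\ell'$ is a nonzerodivisor and each variable of $m$ is regular on $B/x_{i_0,0}B$). This lands you directly at a separator of degree $\min\{\underline{a},\underline{b}\}$, which is strictly below $\underline{b}$ precisely because distinct elements of $\deg_{\X}(P)$ are incomparable, so the contradiction is immediate and the lexicographic bookkeeping and the auxiliary separators $F_3,\ldots,F_t$ are not needed.
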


\begin{proof}  After a change of coordinates, we can assume that $x_{1,0},\ldots,x_{r,0}$
form a regular sequence on $R/\Ix$, and in particular, for each $i$, $x_{i,0}$
does not vanish at any point of $\X$.  Suppose, for a contradiction, that $P \in \X$ is a point with
$\deg_{\X}(P) = \{\ua_1,\ldots,\ua_t\}$ with
$t = |\deg_{\X}(P)| \geq 2$.
If $\{F_1,\ldots,F_t\}$ are $t \geq 2$ minimal separators of $P$
with $\deg F_i = \ua_i$,
we can reorder and relabel the separators so that $\deg F_i \leq_{lex} \deg F_{i+1}$
for $i = 1,\ldots,t-1$
with respect to the lexicographical order.

For ease of notation, let $F = F_1$ and $G = F_2$ be the two
smallest minimal separators with respect to the lexicographical
order.  Suppose $\deg F = \underline{a} = (a_1,\ldots,a_r)$ and
$\deg G = \underline{b} = (b_1,\ldots,b_r)$.  Set $s = \min\{i
~|~ a_i < b_i\}$;  such an $s$ exists since $\deg F \neq \deg G$
by Corollary \ref{degreeunique}.  Also, let $p = \min\{j ~|~ a_j
> b_j \}$.  Such a $p$ must exist; otherwise $\deg G \succeq \deg
F$, contradicting the fact that $F$ and $G$ are minimal separators
of $P$.

Consider $\underline{c} = (c_1,\ldots,c_r)$ where $c_i = \max\{a_i,b_i\}$.
Since $\underline{c} \succeq \underline{a}$, by Corollary
\ref{cordim}
we must have that $\dim_k (I_{\X},F)_{\underline{c}} = \dim_k (I_{\X})_{\underline{c}} + 1$.
So, a basis for $(I_{\X},F)_{\underline{c}}$ is given by the
$\dim_k (I_{\X})_{\underline{c}}$ basis elements of $(I_{\X})_{\underline{c}}$
and any other form of degree $\underline{c}$ in $(I_{\X},F)_{\underline{c}}
\backslash (I_{\X})_{\underline{c}}$.
One such form is
\[x_{1,0}^{c_1-a_1}\cdots x_{r,0}^{c_r-a_r}F = x_{s,0}^{c_s-a_s}\cdots x_{r,0}^{c_r-a_r}F.\]
Recall,
we are assuming that $x_{i,0}$s form a regular sequence on $R/I_{\X}$, so none
of the $x_{i,0}$'s vanish at any of the points.  As well, $c_i = a_i = b_i$ for $i =1,\ldots,s-1$.

A similar argument implies that $\dim_k (\Ix,G)_{\underline{c}} = \dim_k (\Ix)_{\underline{c}}+1$,
so a basis for $(\Ix,G)_{\underline{c}}$ is given by the
$\dim_k (\Ix)_{\underline{c}}$ basis elements of $(\Ix)_{\underline{c}}$ and
$x_{1,0}^{c_1-b_1}\cdots x_{r,0}^{c_r-b_r}G = x_{p,0}^{c_p-b_p}\cdots x_{r,0}^{c_r-b_r}G$
(because $c_i = b_i$ for $i= 1,\ldots,p-1$).

Since $\underline{c} \succeq \deg G$, and $\underline{c} \succeq \deg F$ we have
\[(I_{\X},F)_{\underline{c}} \subseteq (I_{\Y})_{\underline{c}} ~~\mbox{and}~~
(I_{\X},G)_{\underline{c}} \subseteq
(I_{\Y})_{\underline{c}}.\]
But $\dim_k (I_{\X},F)_{\underline{c}} = \dim_k (I_{\X})_{\underline{c}} + 1$, and since
$\dim_k (I_{\Y})_{\underline{c}} \leq \dim_k (I_{\X})_{\underline{c}}+1$, we must have
$(I_{\X},F)_{\underline{c}} = (I_{\Y})_{\underline{c}}$.  A similar argument implies that
$(I_{\X},G)_{\underline{c}} = (I_{\Y})_{\underline{c}}$.  Hence,
\[(I_{\X},F)_{\underline{c}} = (I_{\X},G)_{\underline{c}}.\]
Because $x_{p,0}^{c_p-b_p}\cdots x_{r,0}^{c_r-b_r}G \in
(I_{\X},G)_{\underline{c}}$,
our discussion about
the basis for $(I_{\X},F)_{\underline{c}}$ implies
\[ x_{p,0}^{c_p-b_p}\cdots x_{r,0}^{c_r-b_r}G = H +
cx_{s,0}^{c_s-a_s}\cdots x_{r,0}^{c_{r}-a_r}F ~~\mbox{with $H \in (I_{\X})_{\underline{c}}$
and $0 \neq c \in k$.}\]
Note that $c \neq 0$ because if $c = 0$, then the right hand side vanishes at all the points
of $\X$, but the left hand side does not.
We thus have
\[
 x_{p,0}^{c_p-b_p}\cdots x_{r,0}^{c_r-b_r}G - cx_{s,0}^{c_s-a_s}\cdots x_{r,0}^{c_{r}-a_r}F
 \in (I_{\X}).\]

Because $x_{1,0},\ldots,x_{r,0}$ form a regular sequence on
$R/I_{\X}$ and since $\X$ is ACM, any permutation of these variables
is again a regular sequence on $R/I_{\X}$.  So, we can assume there is a regular
sequence whose first two elements are $x_{s,0}$ and $x_{p,0}$.  So,
$x_{p,0}^{c_p-b_p}\cdots x_{r,0}^{c_r-b_r}G \in (\Ix,x_{s,0})$, and
since $x_{p,0}$ is regular on $R/(I_{\X},x_{s,0})$ we have
$x_{p+1,0}^{c_{p+1}-{b_{p+1}}}\cdots x_{r,0}^{c_r-b_r}G \in (I_{\X},x_{s,0})$.
Thus
\[x_{p+1,0}^{c_{p+1}-{b_{p+1}}}\cdots x_{r,0}^{c_r-b_r}G = H_1 + H_2x_{s,0} ~~\mbox{with $H_1 \in I_{\X}$
and $H_2 \in R$.}\]
If $Q \in \X \backslash \{P\}$, then $G(Q) = 0$ implies that $H_2(Q) = 0$ since $H_1(Q) = 0$
and $x_{s,0}(Q) \neq 0$.  On the other hand, if we evaluate both sides at $P$ we have
\[
0 \neq x_{p+1,0}^{c_{p+1}-{b_{p+1}}}\cdots x_{r,0}^{c_r-b_r}(P)G(P) =
H_1(P) + x_{s,0}(P)H_2(P) = x_{s,0}(P)H_2(P).\]  But because
$x_{s,0}(P) \neq 0$, this forces $H_2(P) \neq 0$.  So, $H_2$ is a separator of
$P$ with $\deg H_2 = (b_1,\ldots,b_s-1,\ldots,b_p,c_{p+1},\ldots,c_r)$.
Let $F_{\ell}$ be a minimal separator with $\deg F_{\ell} \preceq \deg H_2$.
But then $\deg F_{\ell} \leq_{\operatorname{lex}} \deg G = (b_1,\ldots,b_s,\ldots,b_r)$.  But any
minimal separator whose degree is smaller than $G$ with respect to lex
must have the same degree as $F_1$, i.e.,
$\deg F_1 = \deg F_{\ell}$.  So, $\deg F_1 \preceq \deg H_2$.  But this contradicts the
fact that $a_p > b_p$ and hence $\deg F_1 \not\preceq \deg H_2$.  This
gives the desired contradiction.
\end{proof}

\begin{remark}
If $\X$ is a finite set of ACM points in $\pnr$, then by the above theorem
we know that $|\deg_{\X}(P)| = 1$ for any $P \in \X$.  So we can
talk about {\it the} degree of a point in this situation.
\end{remark}

In the forthcoming paper of Marino \cite{m3}, it is shown that
the converse of the above statement holds in $\popo$, thus
giving a new characterization of ACM sets of points in $\popo$.
We record the precise statement here:

\begin{theorem}\label{marino}
Let $\X$ be a finite set of points in $\popo$.  Then
$\X$ is ACM if and only if
$|\deg_{\X}(P)| = 1$ for all $P \in \X$.
\end{theorem}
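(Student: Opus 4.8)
The forward implication $(\Rightarrow)$ is the $r=2$ case of Theorem \ref{uniqueMultsep}, so only the converse needs attention, and I would prove its contrapositive: \emph{if $\X\subseteq\popo$ is not ACM, then some point of $\X$ admits two minimal separators of incomparable degree.}

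The first step is to translate $\deg_{\X}(P)$ into a statement about a monomial ideal. Fix $P\in\X$ and put $\Y=\X\setminus\{P\}$. The proof of Theorem \ref{boundsonHx} already identifies $(I_{\Y}+I_P)/I_P$ with an $\N^2$-graded ideal $J$ of $R/I_P\cong k[z_1,z_2]$ (with $\deg z_i=e_i$); since every graded component of $k[z_1,z_2]$ is one-dimensional, $J$ is forced to be a monomial ideal, and tracking the set $S$ in that proof shows that $\deg_{\X}(P)$ is precisely the set of exponent vectors of the minimal monomial generators of $J$. In particular, $|\deg_{\X}(P)|=1$ if and only if $J$ is principal. (Below I write $z^{\underline{d}}$ for $z_1^{d_1}z_2^{d_2}$.)

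Now assume $\X$ is not ACM. By Theorem \ref{theorem*} (equivalently Corollary \ref{nonacm}) the set $\X$ fails property $(\star)$, so there are points $A=P_1\times Q_1$ and $B=P_2\times Q_2$ in $\X$ with $P_1\neq P_2$, $Q_1\neq Q_2$, but $P_1\times Q_2,\ P_2\times Q_1\notin\X$. I would then prove that at least one of $A,B$ has two incomparable minimal separator degrees. After a change of coordinates making the degree-$e_1$ forms through $P_1,P_2$ and the degree-$e_2$ forms through $Q_1,Q_2$ into coordinate variables, one constructs from products of linear forms a ``column-heavy'' separator $G_1$ of $A$ and a ``row-heavy'' separator $G_2$ of $A$, of incomparable bidegrees $\delta_1,\delta_2$: the condition $P_1\times Q_2\notin\X$ forces every separator of $A$ to carry an $e_2$-component absorbing the second coordinates occurring with $P_1$, the condition $P_2\times Q_1\notin\X$ forces an $e_1$-component, and $B\in\X$ prevents a single linear factor from discharging both. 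At $\underline{c}=\min\{\delta_1,\delta_2\}$, Corollary \ref{cordim} gives $\dim_k(I_{\X},G_i)_{\underline{c}}=\dim_k(I_{\X})_{\underline{c}}+1$, while Theorem \ref{boundsonHx} caps $\dim_k(I_{\Y})_{\underline{c}}$ at $\dim_k(I_{\X})_{\underline{c}}+1$; hence if one can rule out \emph{every} separator of $A$ of bidegree $\preceq\underline{c}$, then $J$ cannot be principal and $|\deg_{\X}(A)|\geq2$. When that exclusion fails for $A$ --- which can genuinely occur, for instance when a bidegree-$(1,1)$ curve through $B$ and the remaining points separates $A$ --- I expect the symmetric argument to go through for $B$.

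The step I expect to be the main obstacle is exactly this exclusion of \emph{all} separators (not merely products of linear forms) of a small bidegree, done uniformly in the position of the points: it cannot be carried out purely combinatorially, since the minimal separator degree of a fixed point can depend on the coordinates of the points even though ACMness depends only on the pattern $\Ssx$; the resolution should come from playing the $k[z_1,z_2]$-picture of $\deg_{\X}(P)$ against this semicontinuity. A variant replaces the contrapositive by induction on $|\X|$: if every point of $\X$ has a unique minimal separator degree, remove the point $P_0$ with lex-smallest degree $\delta_0$; Theorem \ref{teopt} and Corollary \ref{cordim} give $I_{\X\setminus\{P_0\}}=(I_{\X},F_0)$ together with $0\to R/I_{P_0}(-\delta_0)\to R/I_{\X}\to R/I_{\X\setminus\{P_0\}}\to0$, and the depth lemma promotes ``$\X\setminus\{P_0\}$ ACM'' to ``$\X$ ACM''; but one still has to check that removing $P_0$ preserves uniqueness of degrees, which is the same difficulty in disguise.
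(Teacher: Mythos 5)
There is an important mismatch of expectations here: the paper does not prove this theorem. The forward implication is indeed the $r=2$ case of Theorem \ref{uniqueMultsep}, exactly as you say, but the converse is only \emph{recorded} from Marino's forthcoming paper \cite{m3}; no argument for it appears in this text. So your proposal has to stand on its own, and it does not yet do so.

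Your preliminary reductions are correct and worth keeping: since every bigraded piece of $k[z_1,z_2]$ is one-dimensional, the ideal $J=(I_{\Y}+I_P)/I_P$ is monomial, and $\deg_{\X}(P)$ is exactly the set of degrees of its minimal generators, so $|\deg_{\X}(P)|=1$ iff $J$ is principal; and Theorem \ref{theorem*} does supply a $(\star)$-violating pair $A,B$ when $\X$ is not ACM. But from there the argument has two genuine gaps, both of which you flag yourself. First, producing two separators of $A$ with incomparable bidegrees $\delta_1,\delta_2$ proves nothing about $|\deg_{\X}(A)|$ until you exclude \emph{every} separator of bidegree $\preceq\min\{\delta_1,\delta_2\}$; this exclusion is the entire content of the converse, and it is left open. (It also cannot be purely combinatorial: for non-ACM subsets of a grid the Hilbert function, hence the set of separator degrees, genuinely depends on the positions of the points and not just on $\Ssx$ --- e.g.\ whether four pairwise non-aligned points lie on a $(1,1)$-curve is a special-position condition --- so the "column-heavy/row-heavy products of linear forms" need not detect the minimal degrees.) Second, the fallback ``if the exclusion fails at $A$, the symmetric argument works at $B$'' is an unproved dichotomy; for a fixed violating pair it could a priori fail at both $A$ and $B$ while the non-uniqueness lives at some third point of $\X$, so at minimum the argument must be allowed to move to a different point or a different violating pair, and you give no mechanism for that. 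The inductive variant at the end founders on the same issue (preservation of uniqueness of degrees under removal of a point), as you note. As written this is a reasonable plan of attack, not a proof, and the converse should still be attributed to \cite{m3}.
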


However, the converse of Theorem \ref{uniqueMultsep} fails to hold in general
as shown below.

\begin{example} \label{example4}
Let $P_1,\ldots,P_6$ be six points in general position in $\pr^2$.
If $Q_{i,j} = P_i \times P_j \in \pr^2 \times \pr^2$, then let $\X$ be the set
of points
\begin{eqnarray*}
\X & = &\{Q_{1,1},Q_{1,2},Q_{2,1},Q_{2,2},Q_{3,1},Q_{3,2},Q_{4,1},Q_{4,2},Q_{5,2},Q_{5,3}, \\
&&Q_{5,4},Q_{5,5},Q_{5,6},Q_{6,1},Q_{6,3},Q_{6,4},Q_{6,5},Q_{6,6}\}.
\end{eqnarray*}
Using {\tt CoCoA} we found that the resolution of $R/I_{\X}$ is
\[0 \rightarrow R^2 \rightarrow R^{14} \rightarrow R^{33} \rightarrow
R^{34} \rightarrow R^{15} \rightarrow R \rightarrow R/I_{\X} \rightarrow 0 \]
and thus $R/\Ix$ is
not Cohen-Macaulay.  The
bigraded Hilbert function of $\X$ is
\[
H_{\X} = \begin{bmatrix}
1 & 3 & 6 & 6 & \cdots \\
3 & 8 & 14 & 14 & \cdots \\
6 & 14 & 18 & 18 & \cdots \\
6 & 14 & 18 & 18 & \cdots \\
\vdots&\vdots&\vdots&\vdots&\ddots
\end{bmatrix}.\]
If we remove the point $Q_{5,2}$ and compute the
Hilbert function of $\Y = \X \setminus \{Q_{5,2}\}$ we get
\[
H_{\Y} = \begin{bmatrix}
1 & 3 & 6 & 6 & \cdots \\
3 & 8 & 14 & 14 & \cdots \\
6 & 14 & 17 & 17 & \cdots \\
6 & 14 & 17 & 17 & \cdots \\
\vdots&\vdots&\vdots&\vdots&\ddots
\end{bmatrix}.\]
From the Hilbert function, it follows that $\deg_{\X}(Q_{5,2})= \{(2,2)\}$
because
the Hilbert function drops by one for all $\underline{i} \succeq (2,2)$.
By checking all other points in a similar fashion, we have
that $\deg_{\X}(Q_{6,1}) = \{(2,2)\}$ and if we
remove any point $Q_{i,j}$ with $i\leq 4$, then $\deg_{\X}(Q_{i,j}) = \{(2,1)\}$, and if we remove any
point of the form $Q_{i,j}$ with $j \geq 3$, then $Q_{i,j}$ has
only a  minimal separator of degree $(1,2)$.
\end{example}



\begin{thebibliography}{99}

\bibitem{abm} S. Abrescia, L. Bazzotti, L. Marino,
Conductor degree and socle degree.  Matematiche (Catania)  {\bf 56}  (2001) 129--148 (2003).


\bibitem{b} L. Bazzotti, M.  Casanellas,
Separators of points on algebraic surfaces.
J. Pure Appl. Algebra {\bf 207} (2006) 319--326.

\bibitem{b2} L. Bazzotti,
Sets of points and their conductor.
J. Algebra {\bf 283} (2005) 799--820.


\bibitem{CCH} J. Chan, C. Cumming, H.T. H\`a, Cohen-Macaulay multigraded modules.
(2007) Preprint. {\tt arXiv:0705.1839}

\bibitem{Co}
CoCoATeam, CoCoA: a system for doing Computations in Commutative Algebra.
Available at {\tt http://cocoa.dima.unige.it}

\bibitem{gmr} A.V. Geramita, P. Maroscia, L. Roberts,
The Hilbert function of a reduced $k$-algebra.
J. Lond. Math. Soc. (2) {\bf 28} (1983) 443-452.


\bibitem{GuMaRa} S. Giuffrida, R. Maggioni, A. Ragusa,
On the postulation of $0$-dimensional subschemes on a smooth quadric.
Pacific J. Math. {\bf 155} (1992) 251--282.

\bibitem{GuMaRa2} S. Giuffrida, R. Maggioni, A. Ragusa,
Resolutions of $0$-dimensional subschemes of a smooth quadric.
In {\it  Zero-dimensional
schemes (Ravello, 1992)}, de Gruyter, Berlin (1994) 191--204.


\bibitem{GuMaRa3} S. Giuffrida, R. Maggioni, A. Ragusa,
Resolutions of generic points lying on a smooth quadric.
Manuscripta Math. {\bf 91} (1996) 421--444.


\bibitem  {Gu1} E. Guardo,  Schemi di ``Fat Points''.
Ph.D. Thesis, Universit\`a di Messina, 2000.

\bibitem {Gu2} E. Guardo, Fat point schemes on a smooth quadric.
J Pure Appl. Algebra \textbf {162} (2001) 183-208.

\bibitem {Gu3} E. Guardo,  A survey on fat points on a smooth quadric.
{\it Algebraic structures and their representations},  61--87, Contemp. Math.,
{\bf 376}, Amer. Math. Soc., Providence, RI, 2005.


\bibitem{HVT} H. T. H\`a, A. Van Tuyl, The regularity of
points in multi-projective spaces.
J. Pure Appl. Algebra {\bf 187} (2004) 153-167.



\bibitem{m1} L. Marino,
Conductor and separating degrees for sets of points in $\pr^r$ and in
$\popo$.   Boll. Unione Mat. Ital. Sez. B Artic. Ric. Mat.
(8) {\bf 9} (2006) 397--421.

\bibitem{m2} L. Marino,
The minimum degree of a surface that passes through all the points of a
0-dimensional scheme but a point $P$.  {\it Algebraic structures and their representations},
315--332, Contemp. Math., {\bf 376}, Amer. Math. Soc., Providence, RI, 2005.

\bibitem{m3} L. Marino, A characterization of ACM 0-dimensional subschemes
of $\popo$. In preparation.

\bibitem{O}  F. Orecchia,
Points in generic position and conductors of curves with ordinary singularities.
J. London Math. Soc. (2) {\bf 24} (1981) 85--96.


\bibitem{SVT} J. Sidman, A. Van Tuyl,
Multigraded regularity: syzygies and fat points.  Beitr\"age Algebra Geom.
{\bf 47}  (2006) 67--87.


\bibitem{VT} A. Van Tuyl,  Sets of points in multi-projective spaces and their Hilbert function.
Ph.D. Thesis, Queen's University, 2001.

\bibitem{VT1} A. Van Tuyl, The border of the Hilbert function of a set of points in
$\pr\sp {n\sb 1}\times\dots\times\pr\sp {n\sb k}$.  J. Pure Appl. Algebra  {\bf 176}
(2002) 223--247.

\bibitem{VT2} A. Van Tuyl, The Hilbert functions of ACM sets of points in
$\pr\sp {n\sb 1}\times\dots\times\pr\sp {n\sb k}$.  J. Algebra  {\bf 264}
(2003) 420--441.

\bibitem{VT3} A. Van Tuyl, On the defining ideal of a set of points in multi-projective space.
J. London Math. Soc. {\bf 72} (2005) 73-90.
\end{thebibliography}
\end{document}